\documentclass[12pt,a4paper,twoside]{article}

\usepackage[a4paper,
            left=2.5cm,
            right=2.5cm,
            top=3cm,
            bottom=3cm]{geometry}

\usepackage[T1]{fontenc}
\usepackage[utf8]{inputenc}
\usepackage{libertinus}
\usepackage{microtype}
\usepackage[english]{babel}

\usepackage{amsmath,amssymb,amsthm,amsfonts}
\usepackage{mathrsfs}
\usepackage{mathtools}
\usepackage{stmaryrd}
\usepackage{enumitem}
\usepackage{titlesec}
\usepackage{titletoc}
\usepackage{fancyhdr}
\usepackage{hyperref}
\usepackage{xcolor}
\usepackage{graphicx}
\usepackage{setspace}
\usepackage{tocloft}
\usepackage{etoolbox}
\usepackage{tikz}
\usepackage{tikz-cd}
\usetikzlibrary{arrows}
\usetikzlibrary{calc}
\usepackage{color}
\usepackage{here}
\usepackage{mathrsfs}
\usepackage{pgf,tikz}
\usetikzlibrary{decorations.pathreplacing, shapes.multipart, arrows, matrix, shapes}
\usetikzlibrary{patterns}
\usepackage{caption}
\usepackage{enumitem}
\usepackage{cancel}
\usepackage{comment}
\usepackage{verbatim}
\usepackage{indentfirst}

\hypersetup{
    colorlinks=true,
    linkcolor=black,
    citecolor=black,
    urlcolor=black,
    pdftitle={},
    pdfauthor={}
}

\titleformat{\section}
  {\normalfont\bfseries}
  {\thesection.}
  {0.5em}
  {}

\titleformat{\subsection}
  {\normalfont\bfseries}
  {\thesubsection.}
  {0.5em}
  {}

\titleformat{\subsubsection}
  {\normalfont\itshape}
  {\thesubsubsection.}
  {0.5em}
  {}

\titlespacing*{\section}
{0pt}{18pt}{8pt}

\titlespacing*{\subsection}
{0pt}{14pt}{6pt}

\titlespacing*{\subsubsection}
{0pt}{10pt}{4pt}

\theoremstyle{plain}
\newtheorem{theorem}{Theorem}[section]
\newtheorem{maintheorem}{Theorem}
\newtheorem{proposition}[theorem]{Proposition}
\newtheorem{lemma}[theorem]{Lemma}
\newtheorem{corollary}[theorem]{Corollary}

\theoremstyle{definition}
\newtheorem{definition}[theorem]{Definition}

\newtheorem{remark}[theorem]{Remark}

\newcommand{\supp}{\operatorname{supp}}

\newcommand{\Z}{{\mathbb Z}}
\newcommand{\R}{{\mathbb R}}

\newcommand{\T}{{\mathbb{S}^1}}

\newcommand{\N}{{\mathbb N}}

\newcommand{\Tf}{{\mathbb{S}_f^1}}

\newcommand{\cA}{{\mathcal A}}

\newcommand{\cE}{{\mathcal E}}

\newcommand{\cP}{{\mathcal P}}

\newcommand{\eps}{\varepsilon}
\newcommand{\per}{\operatorname{Per}(f)}

\newcommand{\eqdef}{\stackrel{\scriptscriptstyle\textrm def.}{=}}

\newcommand{\Leb}{\operatorname{Leb}}
\newcommand{\Lip}{\textrm{Lip}}

\newcommand{\tix}{\tilde{x}}

\newcommand{\tg}{\tilde{\Gamma}}
\newcommand{\hf}{\hat{f}}

\newcommand{\cps}{\cP_f^{\operatorname{erg}}(\mathbb{S}^1)}
\newcommand{\emp}{\mathsf e}

\newcommand{\Wone}{W_1}
\newcommand{\diam}{\operatorname{diam}}

\newcommand{\Sone}{\mathbb{S}^1}
\newcommand{\wto}{\overset{*}{\rightharpoonup}}

\makeatletter

\makeatletter

\renewcommand{\maketitle}{

\thispagestyle{empty}

\begin{center}

{\Large\bfseries
Low emergence in one-dimensional dynamics
\par}

\vspace{1cm}

{%
\large
Odylo Costa\footnote{%
Departamento de Matemática, Universidade Federal do Ceará,
Fortaleza, Brazil. \texttt{odylo.costa@mat.ufc.br}}

Bruno Santiago\footnote{%
Instituto de Matemática, Universidade Federal Fluminense,
Niterói, Brazil. \texttt{brunosantiago@id.uff.br}}
\par
}

\end{center}

\vspace{1cm}

\noindent\textbf{Abstract.} ---
We show that every $C^2$ immersion of the circle $\mathbb{S}^1$ into itself has low emergence.      
\vspace{0.4cm}

\tableofcontents

\vspace{0.6cm}

\noindent\textbf{Mathematical subject classification (2020).} --- 37E10, 37C40, 37A35.

\smallskip

\noindent\textbf{Keywords.} --- Expanding maps, emergence.

\smallskip

\noindent\textbf{Acknowledgements.} --- B.S. was supported by \emph{Conselho Nacional de Densenvolvimento Científico e Tecnológico (CNPQ)} via the grant \emph{Bolsa PQ 307994/2025-2} and by \emph{Fundação de Amparo à Pesquisa do Estado do Rio de Janeiro (FAPERJ)} via the grants \emph{JCNE E-26/204.571/2024} and \emph{JPF E-26/210.344/2022}. O.C. was supported by Instituto Serrapilheira grant “Jangada Dinâmica: Impulsionando Sistemas Dinâmicos na Região Nordeste”, and he is thankful to Jamerson Bezerra (UFC) for useful conversations around some topics of this paper. B.S. and O.C. were partially supported by \emph{Coordenação de Aperfeiçoamento de Pessoal de Nível Superior (CAPES) - Finance code 001}.

\vspace{0.5cm}

\hrule

\vspace{0.3cm}

\newpage

}

\makeatother

\begin{document}

\title{Low emergence in one-dimensional dynamics}
\author{Odylo Costa \& Bruno Santiago}
\date{\today}

\maketitle

\section{Introduction}

Consider a discrete-time dynamical system acting on some compact space. The primary goal of ergodic theory is the description of the asymptotic statistical behaviors of initial conditions. From a global perspective, such a task seems to be simply too general to be within reach. 

The concept of emergence coined by Berger \cite{Berger2017} enters into the theory as a conceptual framework that may fill this gap. Indeed, given a large class of dynamical systems, one can separate its elements into two very distinct subclasses: those with low emergence and those with high emergence. Systems with high emergence present no hope for a simple coherent statistical analysis. In this paper we aim at describing the emergence of one-dimensional dynamical systems.

It is worth pointing out that the emergence of a system depends on the choice of a particular reference measure. The situation may change depending on whether the reference measure is invariant or not. In particular, Berger and Bochi \cite{BergerBochiAIM2021} show that the \emph{metric emergence} with respect to any invariant measure is bounded above by the \emph{topological emergence}.

From this perspective, the article \cite{CarvalhoRodriguesVarandas2024Emergence} shows that orientation-preserving homeomorphisms of one-dimensional manifolds have low \emph{topological emergence} and, in particular, have low \emph{metric emergence} with respect to any invariant reference measure. This is an indication that in dimension one one may hope for a tame global behavior.

Nevertheless, the work of Hofbauer-Keller \cite{HofbauerKeller1995MaxOsc} on the quadratic family implies that high emergence with respect to a non-invariant reference measure may appear if we allow critical points.

Our goal in this paper is to complement these works by considering maps of the unit circle, not necessarily invertible, but without critical points. We take as natural reference measure the (not necessarily invariant) Haar measure of the circle. Our main result is the following.

\begin{maintheorem}
    \label{main}
Let $f:\mathbb{S}^1\to\mathbb{S}^1$ be a $C^2$ immersion. Then, $f$ has low emergence.
\end{maintheorem}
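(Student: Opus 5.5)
The plan is to reduce Theorem~\ref{main} to a statement about Lebesgue-typical empirical measures: I will show that, for Lebesgue-almost every $x$, the empirical measures $\emp_n(x)=\frac1n\sum_{j<n}\delta_{f^j(x)}$ converge to a measure in a set of limit measures that is either finite or controlled polynomially in $\eps$. Low emergence then follows by dominated convergence. Indeed, if $\{\mu_1,\dots,\mu_k\}$ is such a finite set, then $x\mapsto\min_i\Wone(\emp_n(x),\mu_i)$ is bounded and tends to $0$ Lebesgue-a.e. Hence its integral is $<\eps$ for all large $n$, which gives $E(\eps)\le k$ for every $\eps$.

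\textbf{Reduction to cases.} Since $f$ is an immersion, $f'$ never vanishes, so $f$ is a $C^2$ covering map of some degree $d\neq 0$.

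\emph{Case $|d|=1$.} Here $f$ is a diffeomorphism. If it reverses orientation, I pass to $f^2$; empirical measures of $f$ are averages of those of $f^2$ along $x$ and $f(x)$, and $\Wone$ is convex, so this costs only a constant factor. If the rotation number is irrational, $f$ is uniquely ergodic, and $\emp_n(x)$ converges uniformly in $x$ to the unique invariant measure, so $E(\eps)=1$. If the rotation number is $p/q$, every orbit converges to a periodic orbit of period $q$, and $\emp_n(x)$ converges to the periodic measure of that orbit. Periodic measures of period $q$ are $\frac1q$-averages of Diracs, and two such measures whose orbits have points at distance $\le\eps$ are $O(\eps)$-close in $\Wone$. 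Taking one representative per $\eps$-interval therefore gives $E(\eps)\lesssim 1/\eps$, i.e.\ polynomial growth. This is the argument behind \cite{CarvalhoRodriguesVarandas2024Emergence}, now run with Lebesgue as the reference measure.

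\emph{Case $|d|\ge 2$.} I will invoke Mañé's hyperbolicity theorem for $C^2$ one-dimensional maps without critical points. It gives three facts:
\begin{enumerate}[label=(\roman*)]
\item $f$ has only finitely many non-repelling (attracting or neutral) periodic orbits;
\item every compact invariant set avoiding these orbits and their immediate basins is uniformly expanding;
\item consequently, by the distortion estimates available in $C^2$, such an expanding set has zero Lebesgue measure unless it is the whole circle or a cycle of intervals.
\end{enumerate}
Combining this with the theory of metric attractors for $C^2$ maps without critical points (de Melo--van Strien), I will show that Lebesgue-a.e.\ $x$ satisfies one of two alternatives. Either $x$ lies in the basin of one of the finitely many attracting or semi-attracting periodic orbits, and then $\emp_n(x)$ tends to that periodic measure. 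Or $x$ lies in the basin of one of finitely many transitive components, each carrying an invariant measure absolutely continuous with respect to Lebesgue. When that measure is finite, Birkhoff's theorem together with the standard bounded-distortion argument yields convergence to the acip for Lebesgue-a.e.\ point of the basin. This produces finitely many limit measures, so $E(\eps)$ is bounded.

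\textbf{Main obstacle.} The hard part is the presence of neutral periodic orbits in the $|d|\ge2$ case, where the absolutely continuous invariant measure may be infinite (Manneville--Pomeau type behaviour). In that situation I need that $\emp_n(x)\to$ the periodic measure on the neutral orbit for Lebesgue-a.e.\ $x$ in that component. I would first try to prove this with an inducing scheme: take first return to a domain bounded away from the neutral orbit, on which the return map is uniformly expanding with bounded distortion by the $C^2$ hypothesis. The infinite acip then gives that return times have infinite mean, so the proportion of time spent away from the neutral orbit tends to zero a.e.

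If convergence to this Dirac-type measure fails along subsequences, I would fall back on a weaker but sufficient statement. The limit points of $\emp_n(x)$ lie in the segment of convex combinations between the periodic measure and the normalised restricted measures. Covering that one-parameter family by $O(1/\eps)$ balls still gives polynomial, hence low, emergence.
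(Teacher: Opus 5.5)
\textbf{Main gap.} The gap is in the case you yourself flag as the main obstacle: $|d|\ge2$, no plateaux (so $f$ is conjugate to $z\mapsto z^d$), and indifferent periodic points.
\begin{itemize}
\item Your facts (ii)--(iii) give nothing here, because the circle itself contains the neutral orbits.
\item Your inducing scheme needs the first-return branches to be uniformly expanding with bounded distortion. This does not follow ``from the $C^2$ hypothesis''. Those branches make arbitrarily long excursions near the neutral orbits, where (ii) does not apply. The missing ingredient is Ma\~n\'e's lemma on adapted intervals: every return branch satisfies $|\psi'|\le\lambda<1$, with uniform distortion bounds. You would also need his $0$--$1$ law to make the resulting a.c.\ measure conservative, ergodic and Lebesgue-typical.
\item Your first target, convergence to ``the'' neutral periodic measure, is false in general. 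With two neutral fixed points, typical orbits can alternate ever longer stays near each of them, so empirical measures need not converge (Zweim\"uller).
\item Your fallback names the wrong set. If the a.c.\ measure is infinite, every set of finite measure is visited with zero density. The limit points then lie in the simplex $\overline{\operatorname{conv}}\{\delta_p:p\in P\}$, and no normalised restricted measure appears.
\end{itemize}

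\textbf{How the paper handles this case.} The paper reaches exactly this simplex without constructing any a.c.\ measure. It uses Ma\~n\'e's $0$--$1$ law to dispose of the case $\Leb(C_f)=1$, and passes to an iterate to get (H1)--(H4). It then proves $L(\mu)>0$ for every ergodic $\mu\neq\delta_p$, via adapted intervals and Kac's lemma. Next it gets the entropy formula for all asymptotic measures of Lebesgue-typical points, using the partially hyperbolic attractor that realizes the natural extension. Finally, an ergodic component with positive exponent would be an a.c.\ physical measure, which contradicts (H4).

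\textbf{Two further false claims.}
\begin{itemize}
\item Fact (i) fails when $J(f)\neq\Sone$. Ma\~n\'e only bounds the periods, and a periodic plateau can carry an arc, or a Cantor set, of neutral periodic points. For instance, $f=\mathrm{id}$ on an arc is compatible with a degree-two $C^2$ immersion. The limit measures then form a continuum, so ``$E(\eps)$ is bounded'' is wrong. You need the paper's plateau argument: the return maps are interval homeomorphisms, with $O(\eps^{-1})$ or $O(\eps^{-2})$ covering numbers.
\item In the rational degree-one case, $\eps$-closeness of one point of two period-$q$ orbits does not make their periodic measures $O(\eps)$-close, because $f$ may stretch $[x,y]$ to a macroscopic arc. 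The $O(1/\eps)$ bound comes instead from the fact that $x\mapsto\lim_n\emp_n(x)$ traces a curve of finite $\Wone$-length, since each of its $q$ atoms depends monotonically, with degree one, on $x$.
\end{itemize}
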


We point out that it is known that a particular immersion of the circle may present complicated non-coherent behavior, such as in \cite{Zweimuller2002ExactCovering}. Thus, our result complements the existing gap in the literature by showing that although non-coherent behavior may occur, the behavior in general is not highly complex.

Our proof combines the powerful analysis far from the critical set done by Mañé \cite{mane} with a general entropy formula for asymptotic measures \cite{CYZ}. Let us briefly sketch the argument. Using several results from \cite{mane}, one can reduce the study to a particular case of immersions with finitely many indifferent fixed points, topologically conjugate to expanding maps and with no physical measures. Notice that a topological conjugacy does not preserve low emergence; thus further arguments are needed. Therefore, using the main technical result from \cite{mane}, we show that every asymptotic measure not supported on the finite set of indifferent fixed points must have a positive Lyapunov exponent. The entropy formula \cite{CYZ} then shows the existence of physical measures, leading to a contradiction.   

This paper is organized as follows: in Section 2, we give the necessary definitions and basic results about emergence. In Section 3, we treat the case of homeomorphisms of one-dimensional manifolds and establish low metric emergence in this case. Notice that the results of this section generalize \cite{CarvalhoRodriguesVarandas2024Emergence}. In Section 4, we explain how to use the partially hyperbolic theory of \cite{CYZ} in order to deduce an entropy formula for asymptotic measures. In Section 5, we recall the results of \cite{mane} and then in Sections 6 and 7 we reduce the problem to the particular case alluded to above. In Section 8 we show that asymptotic measures not supported on the indifferent set have positive exponent and conclude the proof.

\section{Empirical measures and emergence}
	
	We recall the definition of emergence for a
	continuous map with respect to an arbitrary reference measure.
	
	\begin{definition}[Empirical measures]
		Let $(X,d)$ be a compact metric space, $f\colon X\to X$ continuous,
		and $x\in X$.
		The \textit{$n$-th empirical measure associated to} $x$ is
		\[
		\emp_n(x) := \frac1n \sum_{k=0}^{n-1} \delta_{f^k(x)}
		\in \cP(X),
		\]
		where $\cP(X)$ denotes the space of Borel probability measures on $X$. 
	\end{definition}

When necessary, we use the superscript $\emp_n^f(x)$ to mark the dependence on the dynamical system. An important set in this paper is the set of possible accumulation points of a sequence $(\emp_n(x))_n$, for $x\in X$.

    \begin{definition}[Asymptotic measures]
	Let $(X,d)$ be a compact metric space, $f\colon X\to X$ continuous,
	and $x\in X$. We denote by
	\[
	\cA_f(x)
	:=
	\left\{
		\nu\in\cP(X):
		\emp_{n_k}(x)\wto\nu
		\text{ for some }n_k\to\infty
		\right\}
	\]
	the set of \emph{asymptotic measures} of \(x\).
\end{definition}

\begin{remark}\label{rem:asymptotic-invariant}
	By the usual Krylov--Bogolyubov argument, every asymptotic measure is \(f\)-invariant. More precisely, for every \(x\in X\),
	\[
	\cA_f(x)\subset\cP_f(X),
	\]
	where \(\cP_f(X)\) denotes the set of \(f\)-invariant Borel probability measures on \(X\). See, for instance, \cite[Lemma~2.2.4]{viana2016foundations}.
\end{remark}

    Let $\Wone$ be Kantorovich--Wasserstein--1 distance on $\cP(X)$. This metric is compatible with the weak-$\ast$ topology and is defined by:
	$$
	\Wone(\mu,\nu)=\inf_{\pi \in \Pi(\mu,\nu)}\int_{X\times X} d(x,y) \ d\pi(x,y),
	$$
	where $d$ is the metric on $X$, $\Pi(\mu,\nu)\coloneqq \left\{\pi \in \mathcal{P}(X\times X) \mid (p_{1})_{\ast}\pi=\mu \text{ and }(p_{2})_{\ast}\pi=\nu\right\}$, and $p_{i}\colon X\times X \to X$ are the projections onto the first and second coordinates.

	\begin{definition}[Emergence with respect to a reference measure]
		Let $\mu$ be a Borel probability on $X$ (not necessarily $f$-invariant).
		For $\varepsilon>0$, the \emph{emergence} of $f$ with
		respect to $\mu$ is the function $\cE_\mu(f)\colon \R_{>0}\to \N$ defined as
		\[
		\cE_\mu(f)(\varepsilon)
		:= \min\left\{
		N\in\N :
		\begin{array}{l}
			\exists \nu_1,\dots,\nu_N\in\cP(X) \text{ such that} \\[0.2em]
			\displaystyle
			\limsup_{n\to\infty}
			\int_X
			\min_{1\le i\le N} \Wone \big(\emp_n(x),\nu_i\big)\, d\mu(x)
			\le \varepsilon
		\end{array}
		\right\}.
		\]
		The \emph{order of emergence} is
		\[
		\overline{\mathcal{OE}}_\mu(f)
		:= \limsup_{\varepsilon\to0^+}
		\frac{\log\log\cE_\mu(f)(\varepsilon)}{-\log\varepsilon}.
		\]
		We say that $f$ has \emph{low emergence} (with respect to $\mu$)
		if $\overline{\mathcal{OE}}_\mu(f)=0$.
	\end{definition}

	\subsection{A general criterion for low emergence}
	
	The following elementary criterion is one of the main tools to establish low emergence. It is the abstract mechanism behind the ``finite simplex implies low emergence'' principle.
	
	\begin{lemma}\label{lem:compact-family}
	Let $(Y,d)$ be a compact metric space, let $K\subset Y$ be compact,
	and let $\{\eta_1,\dots,\eta_N\}\subset K$ be an $\eps$-net of $K$.
	If $(y_n)_{n\ge1}\subset Y$ is a sequence such that every accumulation
	point of $(y_n)$ belongs to $K$, then
	\[
	\limsup_{n\to\infty}
	\min_{1\le i\le N} d(y_n,\eta_i)
	\le \eps.
	\]
\end{lemma}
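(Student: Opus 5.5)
The plan is to argue by contradiction, combining compactness of $Y$ with the triangle inequality. Define $g(y):=\min_{1\le i\le N} d(y,\eta_i)$. This function is continuous on $Y$, and in fact $1$-Lipschitz, since it is a minimum of finitely many $1$-Lipschitz functions. Because $\{\eta_1,\dots,\eta_N\}$ is an $\eps$-net of $K$, we have $g(z)\le\eps$ for every $z\in K$. The claim is then that $\limsup_n g(y_n)\le\eps$.

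Suppose instead that $\limsup_{n} g(y_n)>\eps$. Then there exist $\delta>0$ and a subsequence $n_k\to\infty$ such that $g(y_{n_k})\ge\eps+\delta$ for all $k$. Since $Y$ is a compact metric space, it is sequentially compact. Passing to a further subsequence, we may therefore assume that $y_{n_k}\to z$ for some $z\in Y$. This $z$ is an accumulation point of $(y_n)$, so by hypothesis $z\in K$, and hence $g(z)\le\eps$. Continuity of $g$ gives $g(y_{n_k})\to g(z)\le\eps$, which contradicts $g(y_{n_k})\ge\eps+\delta$.

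There is no real obstacle. The only points to check are that accumulation points are understood as limits of subsequences, which matches the hypothesis, and that the $\eps$-net condition gives the non-strict inequality $\min_i d(z,\eta_i)\le\eps$ on $K$. If the net is defined with a strict inequality, the same bound follows a fortiori.

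In the later application, this lemma will be used with $Y=\cP(X)$ under the metric $\Wone$, with $y_n=\emp_n(x)$, and with $K$ a compact set containing $\cA_f(x)$, for instance a finite-dimensional simplex of invariant measures. Combined with dominated convergence (or Fatou's lemma applied to the $\limsup$), it bounds the emergence integral by $\eps$.
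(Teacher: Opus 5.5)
Your proposal is correct and follows essentially the same route as the paper: argue by contradiction, extract a subsequence staying at distance at least $\eps+\delta$ from the net, and use compactness to get a sub-subsequence converging to a point of $K$. The only cosmetic difference is that you pass to the limit through continuity of $g(y)=\min_i d(y,\eta_i)$, while the paper fixes one net point $\eta_{i_0}$ within $\eps$ of the limit and bounds the minimum by $d(y_{n_k},\eta_{i_0})$.
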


\begin{proof}
	Assume the contrary. Then there exist $\eps_0>\eps$ and a subsequence $n_k\to\infty$ such that
	\[
	\min_{1\le i\le N}d(y_{n_k},\eta_i)\ge \eps_0
	\qquad\forall k.
	\]
	Since $Y$ is compact, after passing to a further subsequence we may
	assume that $y_{n_k}\to y$ for some $y\in Y$. Then $y$ is an accumulation point of $(y_n)$, and therefore, by hypothesis, $y\in K$.
	
    Because $\{\eta_1,\dots,\eta_N\}$ is an $\eps$-net of $K$, there exists $i_0$ such that $d(y,\eta_{i_0})\le\eps$. Hence,
	\[
	\limsup_{k\to\infty}
	\min_{1\le i\le N}d(y_{n_k},\eta_i)
	\le
	\lim_{k\to\infty}d(y_{n_k},\eta_{i_0})
	=
	d(y,\eta_{i_0})
	\le\eps,
	\]
	contradicting $\min_{1\le i\le N}d(y_{n_k},\eta_i)\ge\eps_0>\eps$.
\end{proof}
	
	\begin{proposition}\label{prop:compact-family-bound}
		Let $f\colon X\to X$ be continuous on a compact metric space, let $\mu$ be a Borel probability,
		and let $K\subset \cP(X)$ be compact.
		Assume that for $\mu$-a.e. $x$, every accumulation point of $(\emp_n(x))_{n\ge1}$ belongs to $K$.
		Then for every $\eps>0$,
		\[
		\mathcal{E}_\mu(f)(\eps)\le N_{\Wone}(K,\eps),
		\]
		where $N_{\Wone}(K,\eps)$ denotes the $\eps$-covering number of $K$ for $\Wone$.
		In particular, if $K$ has polynomial covering numbers, then $f$ has low emergence with respect
		to $\mu$.
	\end{proposition}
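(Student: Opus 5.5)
The plan is to apply Lemma~\ref{lem:compact-family} pointwise and then integrate, using reverse Fatou to move the $\limsup$ inside the integral. Fix $\eps>0$ and set $N=N_{\Wone}(K,\eps)$. There is a subtlety in how the covering number is defined. If $N_{\Wone}(K,\eps)$ means the minimal number of $\eps$-balls with centers in $\cP(X)$ that cover $K$, then I would apply the lemma with centers not necessarily in $K$. This is harmless: the proof of Lemma~\ref{lem:compact-family} only uses that every point of $K$ lies within $\eps$ of some $\eta_i$, never that $\eta_i\in K$. So choose $\nu_1,\dots,\nu_N\in\cP(X)$ with $K\subset\bigcup_i \overline{B}_{\Wone}(\nu_i,\eps)$.

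\emph{Pointwise bound.} The ambient space is $Y=\cP(X)$ with the metric $\Wone$. Since $X$ is compact and $\Wone$ metrizes the weak-$*$ topology, $Y$ is compact. Let $x$ be one of the $\mu$-full-measure points for which every accumulation point of $(\emp_n(x))_n$ lies in $K$. Lemma~\ref{lem:compact-family}, applied with $y_n=\emp_n(x)$, gives
\[
g_n(x):=\min_{1\le i\le N}\Wone(\emp_n(x),\nu_i),\qquad \limsup_{n\to\infty} g_n(x)\le\eps.
\]

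\emph{Integration.} I need two properties of $g_n$.
\begin{itemize}
\item \emph{Measurability.} The map $x\mapsto\emp_n(x)$ is continuous, since $f$ is continuous, and $\Wone(\cdot,\nu_i)$ is continuous. Hence $g_n$ is continuous.
\item \emph{Uniform bound.} $0\le g_n\le \diam X<\infty$, because $\Wone(\mu,\nu)\le\diam X$ for any pair of measures (use the product coupling).
\end{itemize}
With a constant dominating function on a probability space, reverse Fatou gives
\[
\limsup_{n}\int g_n\,d\mu\le\int\limsup_n g_n\,d\mu\le\eps.
\]
By the definition of emergence this shows $\cE_\mu(f)(\eps)\le N$.

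\emph{The ``in particular''.} Suppose $N_{\Wone}(K,\eps)\le C\eps^{-d}$ for small $\eps$. Then
\[
\log\log\cE_\mu(f)(\eps)\le\log\bigl(\log C+d\log(1/\eps)\bigr)=O\bigl(\log\log(1/\eps)\bigr).
\]
Dividing by $-\log\eps=\log(1/\eps)$ gives a quantity tending to $0$. Hence $\overline{\mathcal{OE}}_\mu(f)=0$, which is low emergence.

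The only step needing care is the justification of reverse Fatou, that is, measurability and uniform boundedness of $g_n$. Both are settled above, so I expect no real obstacle.
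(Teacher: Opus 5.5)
Your proposal is correct and follows the paper's proof essentially verbatim: you apply Lemma~\ref{lem:compact-family} pointwise to $y_n=\emp_n(x)$ in the compact space $(\cP(X),\Wone)$, then use reverse Fatou with the uniform bound $0\le g_n\le\diam(X)$. Your extra remarks fill in details the paper leaves implicit: that the net's centers need not lie in $K$, that $g_n$ is continuous and hence measurable, and the explicit $\log\log$ computation for the ``in particular'' clause.
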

	
	\begin{proof}
		Let $\{\eta_1,\dots,\eta_N\}\subset K$ be an $\eps$-net of $K$, with
		$N=N_{\Wone}(K,\eps)$. Define
		\[
		\Delta_n(x):=\min_{1\le i\le N}\Wone(\emp_n(x),\eta_i).
		\]
		By Lemma~\ref{lem:compact-family}, for $\mu$-a.e. $x$ one has
		\[
		\limsup_{n\to\infty}\Delta_n(x)\le \eps.
		\]
		Also $0\le \Delta_n(x)\le \diam(X)$ for all $x,n$.
		Applying Fatou to $\diam(X)-\Delta_n$ gives
		\[
		\limsup_{n\to\infty}\int_X\Delta_n(x)\,d\mu(x)
		\le
		\int_X \limsup_{n\to\infty}\Delta_n(x)\,d\mu(x)
		\le \eps.
		\]
		Thus we can take $\{\eta_1,\dots,\eta_N\}$ in the definition of emergence,
		and conclude $\mathcal{E}_\mu(f)(\eps)\le N$.
	\end{proof}

\section{The degree-one case: circle and interval homeomorphisms}

In this section we prove the low-emergence estimates needed for the
degree-one case. We also record interval versions, which will be used later
when treating periodic plateaux for immersions of $\T$.

Throughout this section, \(\mathbb S^1=\mathbb R/\mathbb Z\) is endowed with
the standard geodesic distance
\[
d_{\mathbb S^1}(\pi(s),\pi(t))
=
\min_{k\in\mathbb Z}|s-t-k|.
\]
We write
\[
D:=\diam(\mathbb S^1).
\]
Thus, in this normalization, \(D=1/2\).

\subsection{A Ces\`aro lemma}

We shall use the following elementary fact several times.

\begin{lemma}[Ces\`aro convergence]\label{lem:cesaro-homeo}
Let \((a_j)_{j\ge0}\) be a sequence of real numbers converging to \(L\).
Let \((m_n)_{n\ge1}\) be a sequence of positive integers with
\(m_n\to+\infty\). Then
\[
\frac1{m_n}\sum_{j=0}^{m_n-1}a_j\longrightarrow L.
\]
\end{lemma}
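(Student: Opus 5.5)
This is the classical Cesàro mean theorem, with the only twist that the averaging length runs along an arbitrary sequence $m_n\to+\infty$. The plan is to prove the statement for the full sequence of averages $A_m:=\frac1m\sum_{j=0}^{m-1}a_j$ and then pass to $A_{m_n}$. This is legitimate because a subsequence indexed by integers tending to infinity inherits the limit; the $m_n$ need not be monotone or distinct, since for every $M$ we have $m_n\ge M$ for all large $n$.

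To show $A_m\to L$, fix $\eps>0$ and choose $J$ such that $|a_j-L|<\eps/2$ for all $j\ge J$. Since convergent sequences are bounded, set $C:=\sup_j|a_j-L|<\infty$. For $m>J$ we write
\[
|A_m-L|\le \frac1m\sum_{j=0}^{J-1}|a_j-L|+\frac1m\sum_{j=J}^{m-1}|a_j-L|\le \frac{JC}{m}+\frac{\eps}{2}.
\]
The first term is below $\eps/2$ once $m>2JC/\eps$. Hence $|A_m-L|<\eps$ for all sufficiently large $m$.

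Finally, given this threshold $M$, pick $n_0$ with $m_n\ge M$ for all $n\ge n_0$; this is possible because $m_n\to+\infty$. Then $|A_{m_n}-L|<\eps$ for $n\ge n_0$, which is the claim.

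There is no real obstacle here. The only points needing care are the boundedness constant $C$, which is finite because the sequence converges, and the remark that the sequence $m_n$ need not be increasing.
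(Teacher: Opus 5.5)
Your proof is correct and matches the paper's argument: split the sum at an index $J$ beyond which $|a_j-L|$ is small, bound the initial block by $JC/m$ using boundedness of the convergent sequence, and let $m\to\infty$. The only cosmetic difference is that you first prove $A_m\to L$ for the full sequence of averages and then pass to $m_n$, whereas the paper works directly with $m_n$ and takes the limit in $n$ followed by $\varepsilon\to0$.
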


\begin{proof}
Fix \(\varepsilon>0\). Since \(a_j\to L\), there exists \(J\ge1\) such that
\[
|a_j-L|<\varepsilon
\qquad\text{for every }j\ge J.
\]
Since \((a_j)\) is convergent, it is bounded. Hence there exists \(C>0\)
such that
\[
|a_j-L|\le C
\qquad\text{for every }j\ge0.
\]
Then
\[
\left|
\frac1{m_n}\sum_{j=0}^{m_n-1}a_j-L
\right|
\le
\frac1{m_n}\sum_{j=0}^{J-1}|a_j-L|
+
\frac1{m_n}\sum_{j=J}^{m_n-1}|a_j-L|.
\]
Therefore
\[
\left|
\frac1{m_n}\sum_{j=0}^{m_n-1}a_j-L
\right|
\le
\frac{JC}{m_n}
+
\varepsilon.
\]
Letting \(n\to\infty\) and then \(\varepsilon\to0\) proves the claim.
\end{proof}

\subsection{Interval homeomorphisms}

Let \(I=[0,\ell]\) be a compact interval endowed with the usual distance
\(|x-y|\).

\begin{proposition}[Orientation-preserving interval homeomorphisms]
\label{prop:interval-preserving-homeo}
Let \(f:I\to I\) be an orientation-preserving homeomorphism. Then for every
\(x\in I\) there exists \(p_x\in\operatorname{Fix}(f)\) such that
\[
f^n(x)\longrightarrow p_x.
\]
Consequently,
\[
\emp_n(x)\wto \delta_{p_x}.
\]
Moreover, for every Borel probability \(\mu\) on \(I\), and for every
\(\varepsilon>0\),
\[
\cE_\mu(f)(\varepsilon)
\le
1+\left\lceil\frac{2\ell}{\varepsilon}\right\rceil .
\]
In particular, \(f\) has low emergence.
\end{proposition}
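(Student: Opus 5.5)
The plan has three steps: prove convergence of orbits by monotonicity, pass from convergence of orbits to convergence of empirical measures with the Cesàro lemma, and finish with Proposition~\ref{prop:compact-family-bound} applied to the set of Dirac masses at fixed points, which is isometric to a compact subset of an interval.

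\emph{Convergence of orbits.} Since \(f\) is an increasing homeomorphism of \(I\), the endpoints \(0\) and \(\ell\) are fixed, and \(\operatorname{Fix}(f)\) is a closed subset of \(I\). For \(x\in I\) there are three cases. If \(f(x)=x\), set \(p_x=x\). If \(f(x)>x\), then applying the increasing map \(f^n\) gives \(f^{n+1}(x)>f^n(x)\) for every \(n\), so the orbit is strictly increasing and bounded by \(\ell\). It therefore converges to some \(p_x\in I\), and continuity of \(f\) gives \(f(p_x)=p_x\). The case \(f(x)<x\) is symmetric. This argument is routine.

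\emph{Convergence of empirical measures.} Fix \(g\in C(I)\). Then \(a_j:=g(f^j(x))\to g(p_x)\), so Lemma~\ref{lem:cesaro-homeo} with \(m_n=n\) gives \(\int g\,d\emp_n(x)\to g(p_x)\). Hence \(\emp_n(x)\wto\delta_{p_x}\). Alternatively, one can check the same conclusion directly in \(\Wone\): since \(\Wone(\emp_n(x),\delta_{p_x})\le \frac1n\sum_{j<n}|f^j(x)-p_x|\), Cesàro applied to the distances gives convergence to \(0\).

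\emph{Emergence bound.} Let \(K=\{\delta_p: p\in \operatorname{Fix}(f)\}\subset\cP(I)\). This set is compact, because \(p\mapsto\delta_p\) is continuous and \(\operatorname{Fix}(f)\) is compact. It is also isometric to \(\operatorname{Fix}(f)\), since \(\Wone(\delta_p,\delta_q)=|p-q|\). By the previous step, for every \(x\) (so in particular \(\mu\)-a.e.) the only accumulation point of \((\emp_n(x))_n\) is \(\delta_{p_x}\in K\). Proposition~\ref{prop:compact-family-bound} therefore yields \(\cE_\mu(f)(\eps)\le N_{\Wone}(K,\eps)\).

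It remains to bound the covering number, and this is the one step needing care, because the net must lie inside \(K\), as Lemma~\ref{lem:compact-family} requires. Partition \(I\) into \(\lceil \ell/\eps\rceil\) consecutive intervals of length at most \(\eps\). From each interval that meets \(\operatorname{Fix}(f)\), choose one fixed point. Any fixed point lies in one of these intervals, so it is within distance \(\eps\) of the chosen point there. The chosen points thus form an \(\eps\)-net of \(\operatorname{Fix}(f)\) contained in \(\operatorname{Fix}(f)\), of cardinality at most \(\lceil \ell/\eps\rceil\le 1+\lceil 2\ell/\eps\rceil\). This gives the stated bound.

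Finally, since \(\log\log(1+\lceil 2\ell/\eps\rceil)=O(\log\log(1/\eps))\), dividing by \(-\log\eps\) and letting \(\eps\to0^+\) gives \(\overline{\mathcal{OE}}_\mu(f)=0\), so \(f\) has low emergence.
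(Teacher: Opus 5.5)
Your proof is correct and follows the paper's argument: orbits converge monotonically to fixed points, the Ces\`aro lemma gives convergence of the empirical measures, and an $O(\eps^{-1})$ family of Dirac masses combined with Fatou gives the bound. The only difference is minor. The paper uses the uniform grid $\delta_{j\ell/M}$, $M=\lceil 2\ell/\eps\rceil$, whose points need not be fixed, and redoes the Fatou step by hand. You instead go through Proposition~\ref{prop:compact-family-bound}, so you correctly choose the net inside $\{\delta_p : p\in\operatorname{Fix}(f)\}$, which even gives the slightly better bound $\lceil \ell/\eps\rceil$.
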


\begin{proof}
Since \(f\) is increasing and onto, it fixes the endpoints of \(I\). Thus
\(\operatorname{Fix}(f)\neq\emptyset\). The set \(\operatorname{Fix}(f)\)
is closed, and every connected component of
\[
I\setminus \operatorname{Fix}(f)
\]
is an open interval \((a,b)\) whose endpoints are fixed by \(f\).

On such a component, the continuous function \(x\mapsto f(x)-x\) has no
zero, hence has constant sign. If \(f(x)>x\) on \((a,b)\), then
\((f^n(x))_{n\ge0}\) is increasing and bounded above by \(b\). Thus it
converges to some \(L\in[a,b]\). By continuity, \(f(L)=L\), hence
\(L\in\{a,b\}\). Since the sequence is increasing and starts in \((a,b)\),
we must have \(L=b\). Similarly, if \(f(x)<x\) on \((a,b)\), then
\(f^n(x)\to a\). If \(x\in\operatorname{Fix}(f)\), then \(f^n(x)=x\) for
all \(n\). This proves the first claim.

Let \(\varphi\in C^0(I)\). Since \(f^n(x)\to p_x\), we have
\[
\varphi(f^n(x))\to \varphi(p_x).
\]
By Lemma~\ref{lem:cesaro-homeo},
\[
\frac1n\sum_{k=0}^{n-1}\varphi(f^k(x))
\longrightarrow
\varphi(p_x).
\]
Thus \(\emp_n(x)\wto\delta_{p_x}\).

It remains to estimate emergence. Let
\[
M:=\left\lceil\frac{2\ell}{\varepsilon}\right\rceil
\]
and choose the grid
\[
y_j:=\frac{j\ell}{M},
\qquad j=0,\dots,M.
\]
For every \(p\in I\), there exists \(j\in\{0,\dots,M\}\) such that
\[
|p-y_j|\le \frac{\ell}{2M}\le\frac{\varepsilon}{4}.
\]
Let
\[
\nu_j:=\delta_{y_j},
\qquad j=0,\dots,M.
\]
For each \(x\), choose \(j(x)\) such that
\[
W_1(\delta_{p_x},\nu_{j(x)})\le \frac{\varepsilon}{4}.
\]
Then
\[
\min_{0\le j\le M}W_1(\emp_n(x),\nu_j)
\le
W_1(\emp_n(x),\delta_{p_x})
+
W_1(\delta_{p_x},\nu_{j(x)}).
\]
Taking \(\limsup\) in \(n\), we obtain
\[
\limsup_{n\to\infty}
\min_{0\le j\le M}W_1(\emp_n(x),\nu_j)
\le
\frac{\varepsilon}{4}
\]
for every \(x\in I\). As usual, applying Fatou to
\(\ell-\min_j W_1(\emp_n(x),\nu_j)\), or equivalently using dominated
convergence for the decreasing envelopes, gives
\[
\limsup_{n\to\infty}
\int_I
\min_{0\le j\le M}W_1(\emp_n(x),\nu_j)\,d\mu(x)
\le
\frac{\varepsilon}{4}
\le \varepsilon.
\]
Thus
\[
\cE_\mu(f)(\varepsilon)
\le M+1
=
1+\left\lceil\frac{2\ell}{\varepsilon}\right\rceil .
\]
\end{proof}

\begin{proposition}[Orientation-reversing interval homeomorphisms]
\label{prop:interval-reversing-homeo}
Let \(f:I\to I\) be an orientation-reversing homeomorphism. Then for every
\(x\in I\) there exists \(p_x\in I\) satisfying \(f^2(p_x)=p_x\) such that
\[
\emp_n(x)\wto \mu_x,
\]
where
\[
\mu_x=
\begin{cases}
\delta_{p_x}, & \text{if }f(p_x)=p_x,\\[0.3em]
\dfrac12(\delta_{p_x}+\delta_{f(p_x)}),
& \text{if }f(p_x)\neq p_x.
\end{cases}
\]
Moreover, for every Borel probability \(\mu\) on \(I\), and for every
\(\varepsilon>0\),
\[
\cE_\mu(f)(\varepsilon)
\le
\left(1+\left\lceil\frac{2\ell}{\varepsilon}\right\rceil\right)^2.
\]
In particular, \(f\) has low emergence.
\end{proposition}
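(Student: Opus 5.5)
The plan is to reduce to Proposition~\ref{prop:interval-preserving-homeo} applied to \(g:=f^2\), which is an orientation-preserving homeomorphism of \(I\). That proposition gives, for every \(x\in I\), a point \(p_x\in\operatorname{Fix}(f^2)\) with \(f^{2n}(x)\to p_x\). By continuity of \(f\), this also gives \(f^{2n+1}(x)\to f(p_x)\).

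For the convergence of empirical measures, fix \(\varphi\in C^0(I)\) and split the Birkhoff sum over the even and odd times:
\[
\frac1n\sum_{k=0}^{n-1}\varphi(f^k(x))
=\frac1n\sum_{j<\lceil n/2\rceil}\varphi(f^{2j}(x))+\frac1n\sum_{j<\lfloor n/2\rfloor}\varphi(f^{2j+1}(x)).
\]
Apply Lemma~\ref{lem:cesaro-homeo} to each piece with \(m_n=\lceil n/2\rceil\) and \(m_n=\lfloor n/2\rfloor\) respectively. Since \(m_n/n\to 1/2\), the two pieces converge to \(\tfrac12\varphi(p_x)\) and \(\tfrac12\varphi(f(p_x))\). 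Hence
\[
\emp_n(x)\wto\tfrac12(\delta_{p_x}+\delta_{f(p_x)}).
\]
This is exactly \(\mu_x\) in both cases of the statement, since it equals \(\delta_{p_x}\) when \(f(p_x)=p_x\).

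For the emergence bound, reuse the grid \(y_j=j\ell/M\), \(j=0,\dots,M\), with \(M=\lceil 2\ell/\varepsilon\rceil\), and take the family
\[
\nu_{j,k}:=\tfrac12(\delta_{y_j}+\delta_{y_k}),\qquad 0\le j,k\le M.
\]
This family has at most \((M+1)^2\) members. Given \(x\), choose \(j\) with \(|p_x-y_j|\le\varepsilon/4\) and \(k\) with \(|f(p_x)-y_k|\le\varepsilon/4\). Couple the half-mass at \(p_x\) with the half-mass at \(y_j\), and likewise \(f(p_x)\) with \(y_k\). This coupling shows
\[
W_1(\mu_x,\nu_{j,k})\le\tfrac12\cdot\tfrac{\varepsilon}{4}+\tfrac12\cdot\tfrac{\varepsilon}{4}=\tfrac{\varepsilon}{4}.
\]
By the triangle inequality and the weak-\(*\) convergence just proved (with \(W_1\) metrizing the weak-\(*\) topology), we get
\[
\limsup_n\min_{j,k}W_1(\emp_n(x),\nu_{j,k})\le\varepsilon/4
\]
for every \(x\). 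Fatou's lemma, applied exactly as in Proposition~\ref{prop:interval-preserving-homeo} (the integrand is bounded by \(\ell\)), then yields the integrated bound. Therefore \(\cE_\mu(f)(\varepsilon)\le(M+1)^2\), and since this is polynomial in \(1/\varepsilon\), \(\overline{\mathcal{OE}}_\mu(f)=0\).

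There is no real obstacle. The only points needing care are the parity split in the Ces\`aro step, where the two sub-averages have lengths \(\lceil n/2\rceil\) and \(\lfloor n/2\rfloor\) (both \(\sim n/2\)), and the verification that \(f^2\) is an increasing surjective homeomorphism of \(I\), which is immediate.
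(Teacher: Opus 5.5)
Your proposal is correct and follows essentially the same route as the paper. Both pass to the orientation-preserving square $f^2$, use continuity for the odd iterates, apply the Ces\`aro lemma along the two parity classes, and bound the emergence with the family $\frac12(\delta_{y_i}+\delta_{y_j})$ on the grid $y_j=j\ell/M$, via the matching coupling and Fatou. Your remark that $\frac12(\delta_{p_x}+\delta_{f(p_x)})$ already equals $\delta_{p_x}$ in the fixed-point case removes the paper's case split; note only that the Ces\`aro lemma as stated requires $m_n\ge1$, so take $n\ge2$ for the $\lfloor n/2\rfloor$ piece.
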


\begin{proof}
Set \(g:=f^2\). Then \(g:I\to I\) is an orientation-preserving
homeomorphism. By Proposition~\ref{prop:interval-preserving-homeo}, for
every \(x\in I\) there exists \(p_x\in\operatorname{Fix}(g)\) such that
\[
g^n(x)=f^{2n}(x)\longrightarrow p_x.
\]
By continuity,
\[
f^{2n+1}(x)=f(f^{2n}(x))\longrightarrow f(p_x).
\]
Since \(p_x\in\operatorname{Fix}(g)\), we have \(f^2(p_x)=p_x\).

If \(f(p_x)=p_x\), then the whole orbit converges to \(p_x\), and hence
\(\emp_n(x)\wto\delta_{p_x}\). If \(f(p_x)\neq p_x\), then the even iterates
converge to \(p_x\) and the odd iterates converge to \(f(p_x)\). Therefore,
for every \(\varphi\in C^0(I)\), Lemma~\ref{lem:cesaro-homeo} gives
\[
\frac1n\sum_{k=0}^{n-1}\varphi(f^k(x))
\longrightarrow
\frac12\bigl(\varphi(p_x)+\varphi(f(p_x))\bigr).
\]
Thus
\[
\emp_n(x)\wto
\frac12(\delta_{p_x}+\delta_{f(p_x)}).
\]

Now let
\[
M:=\left\lceil\frac{2\ell}{\varepsilon}\right\rceil
\]
and take the grid \(y_j=j\ell/M\), \(j=0,\dots,M\). Consider the family
\[
\mathcal N
:=
\left\{
\frac12(\delta_{y_i}+\delta_{y_j}):0\le i,j\le M
\right\}.
\]
It has cardinality at most \((M+1)^2\). For any pair \(a,b\in I\), choose
grid points \(y_i,y_j\) with
\[
|a-y_i|\le \frac{\ell}{2M},
\qquad
|b-y_j|\le \frac{\ell}{2M}.
\]
Using the coupling matching \(a\) to \(y_i\) and \(b\) to \(y_j\), we get
\[
W_1\left(
\frac12(\delta_a+\delta_b),
\frac12(\delta_{y_i}+\delta_{y_j})
\right)
\le
\frac12|a-y_i|+\frac12|b-y_j|
\le
\frac{\ell}{2M}
\le
\frac{\varepsilon}{4}.
\]
The rest of the argument is identical to the end of
Proposition~\ref{prop:interval-preserving-homeo}: since every empirical
measure converges to a measure of the form above, the family \(\mathcal N\)
is enough in the definition of emergence, and
\[
\cE_\mu(f)(\varepsilon)
\le
|\mathcal N|
\le
(M+1)^2.
\]
\end{proof}

\subsection{Orientation-preserving circle homeomorphisms: irrational rotation number}

We recall the following classical fact.

\begin{theorem}[Unique ergodicity for irrational circle homeomorphisms]
\label{thm:irrational-circle-unique-ergodic}
Let \(f:\mathbb S^1\to\mathbb S^1\) be an orientation-preserving
homeomorphism with irrational rotation number. Then \(f\) is uniquely
ergodic.
\end{theorem}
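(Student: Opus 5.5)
The plan is to show that any two ergodic invariant probability measures of \(f\) coincide. By Krylov--Bogolyubov (Remark~\ref{rem:asymptotic-invariant}) invariant measures exist, and the ergodic ones are the extreme points of \(\cP_f(\mathbb S^1)\). So uniqueness of the ergodic measure gives \(\cP_f(\mathbb S^1)=\{\mu\}\), which is unique ergodicity. The key tool is the semiconjugacy of Poincaré--Denjoy theory.

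Let \(\rho=\rho(f)\notin\mathbb Q\), and let \(R_\rho\) denote the rigid rotation. The first step is to construct a continuous, monotone, degree-one surjection \(h:\mathbb S^1\to\mathbb S^1\) with \(h\circ f=R_\rho\circ h\). Fix a lift \(F\) and a point \(x_0\). Irrationality of \(\rho\) implies that the lifted orbit points \(F^n(\tilde x_0)+m\) are ordered on \(\R\) exactly like the numbers \(n\rho+m\). This is the standard lemma: \(F^n(\tilde x)+m<F^{n'}(\tilde x)+m'\) iff \(n\rho+m<n'\rho+m'\), proved by comparing with the rotation number. Define \(H\) on the orbit by \(H(F^n(\tilde x_0)+m)=n\rho+m\) and extend it by monotonicity (taking suprema). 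Density of \(\{n\rho+m\}\) in \(\R\) makes the extension continuous and surjective. The conjugacy equation holds on the orbit, and hence everywhere by continuity and monotonicity.

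The second step is to show that \(h\) pushes every invariant measure to Lebesgue. If \(\nu\in\cP_f(\mathbb S^1)\), then \(h_*\nu\) is \(R_\rho\)-invariant. By Weyl equidistribution, Lebesgue measure is the only \(R_\rho\)-invariant probability: the Fourier coefficients satisfy \(\hat\eta(k)=e^{2\pi i k\rho}\hat\eta(k)\), which forces \(\hat\eta(k)=0\) for \(k\neq0\). Hence \(h_*\nu=\Leb\).

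The third step, which I expect to be the main obstacle, is to recover \(\nu\) from \(h_*\nu\). The fibers \(h^{-1}(t)\) are either points or closed arcs, and there are at most countably many nondegenerate arcs \(J\). Each such \(J\) is wandering: \(f^n(J)\) is again a fiber, namely \(h^{-1}(t+n\rho)\), and these fibers are pairwise disjoint because \(\rho\) is irrational. So for invariant \(\nu\) we get \(\sum_n\nu(f^nJ)\le1\) with all terms equal, whence \(\nu(J)=0\).

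The endpoints of \(J\) need extra care. An atom \(\nu(\{x\})>0\) would lie on an orbit of period-free points with equal masses, which is impossible. So \(\nu\) is nonatomic and gives zero mass to every nondegenerate fiber. Therefore \(h\) is injective \(\nu\)-almost everywhere. For an arc \(A=[a,b]\), the set \(h^{-1}(h(A))\) differs from \(A\) only by subsets of the two fibers through \(a\) and \(b\), which are \(\nu\)-null. This gives \(\nu(A)=\Leb(h(A))\), so \(\nu\) is determined by \(h\) alone and does not depend on \(\nu\). Unique ergodicity follows.
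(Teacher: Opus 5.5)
The paper gives no proof of this statement: it quotes it as a classical fact and uses it only as a black box in Proposition~\ref{prop:circle-irrational-low}. Your argument is the standard Poincaré semiconjugacy proof, and it is correct. The ordering lemma yields a monotone degree-one \(h\) with \(h\circ f=R_\rho\circ h\). Fourier coefficients force \(h_*\nu=\Leb\). Nondegenerate fibres are wandering, since \(f(h^{-1}(t))=h^{-1}(t+\rho)\), and hence \(\nu\)-null. Therefore \(\nu(A)=\Leb(h(A))\) for every arc \(A\). Compared with simply citing the result, your route is self-contained. It also gives an explicit description of the unique invariant measure as the pull-back of Lebesgue measure by \(h\).

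A few points can be streamlined:
\begin{itemize}
\item The reduction to ergodic measures via extreme points is unnecessary. Your third step already shows that every invariant \(\nu\), ergodic or not, satisfies \(\nu(A)=\Leb(h(A))\).
\item The separate discussion of atoms and endpoints is redundant. \(J\) is the closed fibre, so \(\nu(J)=0\) already covers its endpoints. Moreover, \(h^{-1}(h(A))\setminus A\) can only meet fibres through \(a\) or \(b\) that are nondegenerate, because a singleton fibre \(\{a\}\) lies in \(A\).
\item To pass from arcs to all Borel sets, say explicitly that the arcs \([x_0,t]\) with a fixed left endpoint form a \(\pi\)-system generating the Borel \(\sigma\)-algebra. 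This is what makes \(\nu\) uniquely determined.
\end{itemize}
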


\begin{proposition}[Low emergence in the irrational case]
\label{prop:circle-irrational-low}
Let \(f:\mathbb S^1\to\mathbb S^1\) be an orientation-preserving
homeomorphism with irrational rotation number. Then for every Borel
probability \(\mu\) on \(\mathbb S^1\),
\[
\cE_\mu(f)(\varepsilon)\le 1
\qquad\forall \varepsilon>0.
\]
In particular, \(f\) has low emergence.
\end{proposition}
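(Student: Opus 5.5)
The plan is to invoke Theorem~\ref{thm:irrational-circle-unique-ergodic} to obtain the unique $f$-invariant Borel probability $m$. We then take the single measure $\nu_1:=m$ in the definition of emergence and show that
\[
\limsup_{n\to\infty}\int_{\mathbb S^1}\Wone\big(\emp_n(x),m\big)\,d\mu(x)=0
\]
for every Borel probability $\mu$. This holds even with $\mu$ arbitrary and non-invariant. It gives $\cE_\mu(f)(\varepsilon)\le 1$ for all $\varepsilon>0$, hence $\log\log\cE_\mu(f)(\varepsilon)$ is not even positive, and the order of emergence is $0$. (If one prefers the convention that $\log\log 1$ is $-\infty$, the limsup is still $\le 0$, and emergence is nonnegative by definition.)

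The key step is pointwise convergence $\emp_n(x)\wto m$ for \emph{every} $x\in\mathbb S^1$, not merely almost every $x$. I would argue via Remark~\ref{rem:asymptotic-invariant}. For each $x$, compactness of $\cP(\mathbb S^1)$ gives $\cA_f(x)\neq\emptyset$, and every element of $\cA_f(x)$ is $f$-invariant, hence equal to $m$. A sequence in a compact metric space all of whose accumulation points equal $m$ converges to $m$. Therefore $\Wone(\emp_n(x),m)\to0$ for every $x$, using that $\Wone$ metrizes the weak-$*$ topology.

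Alternatively, one can apply Proposition~\ref{prop:compact-family-bound} directly with $K=\{m\}$. Its covering number is $N_{\Wone}(K,\varepsilon)=1$, and the hypothesis holds for every $x$ by the previous paragraph. This yields $\cE_\mu(f)(\varepsilon)\le1$ at once. Equivalently, one can finish by hand: $0\le \Wone(\emp_n(x),m)\le D$, so dominated convergence gives that the integral tends to $0$.

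There is no real obstacle once unique ergodicity is granted. The only point requiring care is that convergence holds at every point, so the non-invariance of $\mu$ is irrelevant. This everywhere convergence is exactly what unique ergodicity provides through the Krylov--Bogolyubov remark.
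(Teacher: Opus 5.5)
Your proposal is correct and follows essentially the same route as the paper: unique ergodicity gives $\emp_n(x)\wto m$ for every $x$, and then the single measure $\nu_1=m$ together with dominated convergence yields $\cE_\mu(f)(\varepsilon)\le 1$. The only difference is cosmetic: the paper quotes the uniform convergence of Birkhoff averages under unique ergodicity, while you derive everywhere convergence from Remark~\ref{rem:asymptotic-invariant} and compactness of $\cP(\mathbb S^1)$, which is the standard proof of that fact.
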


\begin{proof}
By Theorem~\ref{thm:irrational-circle-unique-ergodic}, \(f\) has a unique
invariant probability measure, say \(\mu_f\). Unique ergodicity implies that
for every \(\varphi\in C^0(\mathbb S^1)\),
\[
\frac1n\sum_{k=0}^{n-1}\varphi(f^k(x))
\longrightarrow
\int\varphi\,d\mu_f
\]
uniformly in \(x\). Equivalently,
\[
\emp_n(x)\wto \mu_f
\qquad\text{for every }x\in\mathbb S^1.
\]
Since \(W_1\) induces the weak-\(\ast\) topology on \(\cP(\mathbb S^1)\), we
have
\[
W_1(\emp_n(x),\mu_f)\to0
\qquad\text{for every }x.
\]
Taking the single measure \(\nu_1:=\mu_f\) in the definition of emergence and
using dominated convergence gives
\[
\lim_{n\to\infty}
\int_{\mathbb S^1}W_1(\emp_n(x),\mu_f)\,d\mu(x)=0.
\]
Therefore \(\cE_\mu(f)(\varepsilon)\le1\) for every \(\varepsilon>0\).
\end{proof}

\subsection{Orientation-preserving circle homeomorphisms: rational rotation number}

Let \(f:\mathbb S^1\to\mathbb S^1\) be an orientation-preserving
homeomorphism with rational rotation number
\[
\rho(f)=\frac pq\in\mathbb Q
\]
in lowest terms. We fix a lift \(F:\mathbb R\to\mathbb R\) of \(f\), and set $H:=F^q-p$. Then \(H\) is a lift of $g:=f^q$ and has rotation number \(0\). In particular, \(H\) has fixed points.

\begin{lemma}[Lifted endpoint map]
\label{lem:lifted-endpoint-map}
For every \(t\in\mathbb R\), the limit
\[
\widetilde z(t):=\lim_{n\to\infty}H^n(t)
\]
exists. Moreover:
\begin{enumerate}[label=(\roman*)]
\item \(\widetilde z:\mathbb R\to\mathbb R\) is nondecreasing;
\item \(\widetilde z(t+1)=\widetilde z(t)+1\) for every \(t\in\mathbb R\);
\item \(\widetilde z\circ F=F\circ \widetilde z\).
\end{enumerate}
\end{lemma}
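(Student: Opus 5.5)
The plan is to reduce everything to the interval case, Proposition~\ref{prop:interval-preserving-homeo}, applied to \(H\) on the fundamental domains cut out by its fixed points. We collect the basic properties of \(H\) first. \(F\) is an increasing homeomorphism of \(\R\) with \(F(t+1)=F(t)+1\). Hence \(H=F^q-p\) is also an increasing homeomorphism satisfying \(H(t+1)=H(t)+1\). Since \(\rho(H)=0\), the set \(\operatorname{Fix}(H)\) is nonempty, and it is closed and invariant under \(t\mapsto t+1\).

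\textbf{Existence of the limit.} If \(t\in\operatorname{Fix}(H)\), then \(\widetilde z(t)=t\). Otherwise \(t\) lies in a bounded connected component \((a,b)\) of \(\R\setminus\operatorname{Fix}(H)\). Boundedness holds because \(\operatorname{Fix}(H)\) is \(1\)-periodic and nonempty, so \(b-a\le 1\). The endpoints \(a,b\) are fixed by \(H\), so \(H\) restricts to an orientation-preserving homeomorphism of \([a,b]\). The argument of Proposition~\ref{prop:interval-preserving-homeo} then applies: \(H(t)-t\) has constant sign on \((a,b)\), and \(H^n(t)\) converges monotonically to \(b\) or to \(a\) accordingly. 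This gives existence, and also shows that \(\widetilde z(t)\in\operatorname{Fix}(H)\).

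\textbf{Properties (i)--(iii).} Each is a pointwise limit of an identity satisfied by the iterates.
\begin{enumerate}[label=(\roman*)]
\item Each \(H^n\) is increasing, so \(s\le t\) gives \(H^n(s)\le H^n(t)\) for all \(n\). Passing to the limit shows that \(\widetilde z\) is nondecreasing.
\item We have \(H^n(t+1)=H^n(t)+1\) for all \(n\), and letting \(n\to\infty\) gives \(\widetilde z(t+1)=\widetilde z(t)+1\).
\item \(F\) commutes with \(F^q\) and with integer translations, so \(F\circ H=H\circ F\), and hence \(F\circ H^n=H^n\circ F\). Then \(\widetilde z(F(t))=\lim_n H^n(F(t))=\lim_n F(H^n(t))=F(\widetilde z(t))\), where the last equality uses continuity of \(F\).
\end{enumerate}

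\textbf{Main obstacle.} The only point needing real care is existence of the limit, specifically that components of \(\R\setminus\operatorname{Fix}(H)\) are bounded. This rests on \(\operatorname{Fix}(H)\neq\emptyset\), which comes from the rotation number being \(0\), together with its \(1\)-periodicity. Once boundedness is in hand, the remaining steps follow directly from the limits above.
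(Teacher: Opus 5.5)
Your proof is correct and follows the paper's argument essentially step for step. Existence comes from the constant sign of $H(t)-t$ on the components of $\R\setminus\operatorname{Fix}(H)$. Items (i)--(iii) then follow by passing to the limit in $H^n(s)\le H^n(t)$, $H^n(t+1)=H^n(t)+1$ and $H^n\circ F=F\circ H^n$. Your explicit remark that these components have length at most $1$, by $1$-periodicity of the nonempty set $\operatorname{Fix}(H)$, justifies that the endpoints are finite fixed points, which the paper uses tacitly.
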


\begin{proof}
Since \(H\) is increasing, \(H(t+1)=H(t)+1\), and \(H\) has fixed points,
the set \(\operatorname{Fix}(H)\) is nonempty, closed, and invariant by
integer translations.

On each connected component \((a,b)\) of
\(\mathbb R\setminus\operatorname{Fix}(H)\), the continuous function
\(H(t)-t\) has no zero and hence has constant sign. If \(H(t)>t\) on
\((a,b)\), then \(H^n(t)\) is increasing and bounded above by \(b\), hence
converges to \(b\). If \(H(t)<t\), then \(H^n(t)\) decreases and converges to
\(a\). If \(t\in\operatorname{Fix}(H)\), then \(H^n(t)=t\). This proves the
existence of \(\widetilde z(t)\).

Since each \(H^n\) is increasing, the pointwise limit \(\widetilde z\) is
nondecreasing. Since \(H(t+1)=H(t)+1\), we have
\[
H^n(t+1)=H^n(t)+1
\]
for every \(n\), and therefore
\[
\widetilde z(t+1)=\widetilde z(t)+1.
\]

Finally, \(H\) commutes with \(F\). Indeed,
\[
H\circ F=F^{q+1}-p
\]
and
\[
F\circ H(t)=F(F^q(t)-p)=F^{q+1}(t)-p,
\]
because \(p\in\mathbb Z\) and \(F(s-p)=F(s)-p\). Hence
\[
H^n(F(t))=F(H^n(t))
\]
for every \(n\). Taking limits and using continuity of \(F\), we get $\widetilde z(F(t))=F(\widetilde z(t))$. \end{proof}

The map \(\widetilde z\) descends to a Borel map $z:\mathbb S^1\to\mathbb S^1$ defined by
\[
z(\pi(t)):=\pi(\widetilde z(t)).
\]
This is well-defined by item (ii) of Lemma~\ref{lem:lifted-endpoint-map}.
Moreover,
\[
z\circ f=f\circ z.
\]

Define $\Psi:\mathbb S^1\to\cP(\mathbb S^1)$ by
\[
\Psi(x):=\frac1q\sum_{j=0}^{q-1}\delta_{f^j(z(x))}.
\]

\begin{lemma}[Empirical convergence in the rational case]
\label{lem:rational-empirical-convergence}
For every \(x\in\mathbb S^1\),
\[
\emp_n(x)\wto \Psi(x).
\]
\end{lemma}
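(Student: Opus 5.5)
We reduce the statement to Lemma~\ref{lem:cesaro-homeo}, applied separately along each residue class modulo \(q\). Fix \(x=\pi(t)\) and write \(t_j:=F^j(t)\) for \(j=0,\dots,q-1\), so that \(f^j(x)=\pi(t_j)\). Every integer \(k\ge0\) can be written uniquely as \(k=mq+j\) with \(0\le j\le q-1\), and then
\[
F^{k}(t)=F^{mq}(F^j(t))=H^m(t_j)+mp.
\]
The last equality holds because \(F\) commutes with integer translations.

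Projecting to the circle, this gives \(f^{mq+j}(x)=\pi(H^m(t_j))\). By Lemma~\ref{lem:lifted-endpoint-map}, \(H^m(t_j)\to\widetilde z(t_j)\) as \(m\to\infty\). By item (iii) of the same lemma,
\[
\widetilde z(t_j)=\widetilde z(F^j(t))=F^j(\widetilde z(t)).
\]
Hence \(f^{mq+j}(x)\to \pi(F^j(\widetilde z(t)))=f^j(z(x))\). In other words, along each residue class \(j\) modulo \(q\), the orbit of \(x\) converges to the point \(f^j(z(x))\).

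Now fix \(\varphi\in C^0(\mathbb S^1)\). Write \(n=Mq+r\) with \(0\le r<q\). We split the Birkhoff sum as
\[
\frac1n\sum_{k=0}^{n-1}\varphi(f^k(x))=\sum_{j=0}^{q-1}\frac{Mq}{n}\cdot\frac1q\cdot\frac1M\sum_{m=0}^{M-1}\varphi\bigl(f^{mq+j}(x)\bigr)+\frac1n\sum_{k=Mq}^{n-1}\varphi(f^k(x)).
\]
The remainder term has at most \(q\) summands, so it is bounded by \(q\|\varphi\|_\infty/n\to0\). Also \(Mq/n\to1\).

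For each \(j\), set \(a_m:=\varphi(f^{mq+j}(x))\). By continuity of \(\varphi\) and the previous step, \(a_m\to\varphi(f^j(z(x)))\). Lemma~\ref{lem:cesaro-homeo}, applied with \(m_n=M\to\infty\), then shows that the inner average converges to \(\varphi(f^j(z(x)))\). Summing over \(j\), the Birkhoff averages converge to
\[
\frac1q\sum_{j=0}^{q-1}\varphi\bigl(f^j(z(x))\bigr)=\int\varphi\,d\Psi(x).
\]
Since \(\varphi\) was arbitrary, \(\emp_n(x)\wto\Psi(x)\).

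The only delicate point is the lift bookkeeping in the identity \(F^{mq}(s)=H^m(s)+mp\). It follows by induction from \(H=F^q-p\) together with \(F(s+p)=F(s)+p\). Once this identity is in place, the rest is routine.
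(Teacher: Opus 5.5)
Your proof is correct and follows essentially the same route as the paper: show that the orbit converges to $f^j(z(x))$ along each residue class $j$ modulo $q$, then apply Lemma~\ref{lem:cesaro-homeo} class by class. The only differences are cosmetic. You get the residue-class convergence from the lift identity $F^{mq}(s)=H^m(s)+mp$ together with item (iii), whereas the paper simply applies continuity of $f^j$ to $f^{qm}(x)\to z(x)$. You also handle the leftover terms with an explicit remainder where the paper uses the counts $N_r(n)/n\to 1/q$.
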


\begin{proof}
Fix \(x=\pi(t)\). Since
\[
\pi(H^n(t))=g^n(x)=f^{qn}(x),
\]
we have
\[
f^{qn}(x)\longrightarrow z(x).
\]
Therefore, for every \(r\in\{0,\dots,q-1\}\),
\[
f^{qn+r}(x)
=
f^r(f^{qn}(x))
\longrightarrow
f^r(z(x)).
\]
Let \(\varphi\in C^0(\mathbb S^1)\). Decompose the Birkhoff sum into residue
classes modulo \(q\):
\[
\frac1n\sum_{k=0}^{n-1}\varphi(f^k(x))
=
\sum_{r=0}^{q-1}
\frac{N_r(n)}{n}
\left(
\frac1{N_r(n)}
\sum_{j=0}^{N_r(n)-1}
\varphi(f^{qj+r}(x))
\right),
\]
where \(N_r(n)\) is the number of integers \(0\le k<n\) with
\(k\equiv r\pmod q\). Then
\[
\frac{N_r(n)}{n}\longrightarrow \frac1q.
\]
Moreover, by Lemma~\ref{lem:cesaro-homeo},
\[
\frac1{N_r(n)}
\sum_{j=0}^{N_r(n)-1}
\varphi(f^{qj+r}(x))
\longrightarrow
\varphi(f^r(z(x))).
\]
Thus
\[
\frac1n\sum_{k=0}^{n-1}\varphi(f^k(x))
\longrightarrow
\frac1q\sum_{r=0}^{q-1}\varphi(f^r(z(x))).
\]
This is precisely
\[
\emp_n(x)\wto \frac1q\sum_{r=0}^{q-1}\delta_{f^r(z(x))}
=\Psi(x).
\]
\end{proof}

We now prove the finite-length estimate for the image of \(\Psi\).

\begin{lemma}[A coupling estimate]
\label{lem:rational-coupling}
For every \(x,y\in\mathbb S^1\),
\[
W_1(\Psi(x),\Psi(y))
\le
\frac1q\sum_{j=0}^{q-1}
d_{\mathbb S^1}\bigl(f^j(z(x)),f^j(z(y))\bigr).
\]
\end{lemma}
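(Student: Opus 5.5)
The plan is to exhibit one explicit coupling between $\Psi(x)$ and $\Psi(y)$ and bound its transport cost. By definition of $W_1$ as an infimum over couplings, the cost of any element of $\Pi(\Psi(x),\Psi(y))$ is an upper bound for the distance.

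Write $a_j:=f^j(z(x))$ and $b_j:=f^j(z(y))$ for $j=0,\dots,q-1$, so that
\[
\Psi(x)=\frac1q\sum_{j=0}^{q-1}\delta_{a_j},
\qquad
\Psi(y)=\frac1q\sum_{j=0}^{q-1}\delta_{b_j}.
\]
Define the diagonal matching coupling
\[
\pi:=\frac1q\sum_{j=0}^{q-1}\delta_{(a_j,b_j)}\in\cP(\mathbb S^1\times\mathbb S^1).
\]
Check the marginals: $(p_1)_*\pi=\frac1q\sum_j\delta_{a_j}=\Psi(x)$ and $(p_2)_*\pi=\frac1q\sum_j\delta_{b_j}=\Psi(y)$. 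This holds even when some of the $a_j$ (or $b_j$) coincide, because pushforward is linear and we are summing Dirac masses with multiplicity. Hence $\pi\in\Pi(\Psi(x),\Psi(y))$.

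Integrating the metric against $\pi$ then gives
\[
W_1(\Psi(x),\Psi(y))\le\int d_{\mathbb S^1}\,d\pi=\frac1q\sum_{j=0}^{q-1}d_{\mathbb S^1}(a_j,b_j),
\]
which is exactly the claimed inequality.

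There is no real obstacle here. The only point needing care is the marginal check when orbit points repeat (for instance, when $z(x)$ has period smaller than $q$), and linearity of pushforward handles that.
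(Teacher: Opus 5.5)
Your proposal is correct and is exactly the paper's argument: take the diagonal coupling $\frac1q\sum_{j=0}^{q-1}\delta_{(f^j(z(x)),\,f^j(z(y)))}$, which pairs the $j$-th atom of $\Psi(x)$ with the $j$-th atom of $\Psi(y)$, and bound $W_1$ by its transport cost. Your check that the marginals are right even when atoms repeat is a detail the paper leaves implicit. One minor notational point: the paper already uses $\pi$ for the projection $\mathbb R\to\mathbb S^1$, so a different letter for the coupling (the paper uses $\varpi$) avoids a clash.
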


\begin{proof}
Use the coupling that pairs the \(j\)-th atom of \(\Psi(x)\) with the
\(j\)-th atom of \(\Psi(y)\):
\[
\varpi
:=
\frac1q\sum_{j=0}^{q-1}
\delta_{(f^j(z(x)),\,f^j(z(y)))}.
\]
Then \(\varpi\) is a coupling of \(\Psi(x)\) and \(\Psi(y)\), so
\[
W_1(\Psi(x),\Psi(y))
\le
\int d_{\mathbb S^1}\,d\varpi
=
\frac1q\sum_{j=0}^{q-1}
d_{\mathbb S^1}\bigl(f^j(z(x)),f^j(z(y))\bigr).
\]
\end{proof}

\begin{lemma}[Length of monotone degree-one curves]
\label{lem:monotone-degree-one-length}
Let \(h:\mathbb R\to\mathbb R\) be nondecreasing and satisfy
\[
h(t+1)=h(t)+1.
\]
Define
\[
\eta:[0,1]\to\mathbb S^1,
\qquad
\eta(t):=\pi(h(t)).
\]
Then
\[
\operatorname{length}(\eta)\le 4D.
\]
\end{lemma}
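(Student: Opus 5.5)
The idea is to bound the length of $\eta$ by the total variation of $h$ on $[0,1]$, where that variation is at most $h(1)-h(0)=1$, and then to compare with $D=1/2$: since $4D=2\ge 1$, the claimed bound follows with room to spare. Since $h$ may be discontinuous, $\eta$ need not be a continuous curve. I therefore interpret $\operatorname{length}(\eta)$ in the usual way as the supremum over partitions,
\[
\operatorname{length}(\eta)=\sup_{0=t_0<t_1<\dots<t_m=1}\sum_{i=1}^m d_{\mathbb S^1}\bigl(\eta(t_{i-1}),\eta(t_i)\bigr).
\]
This is the definition that makes sense in the application to $\Psi$ (through Lemma~\ref{lem:rational-coupling}), where $z$ is only Borel.

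First, fix a partition $0=t_0<\dots<t_m=1$. For each $i$, the definition of the geodesic distance, $d_{\mathbb S^1}(\pi(s),\pi(t))=\min_k|s-t-k|\le|s-t|$, gives
\[
d_{\mathbb S^1}\bigl(\eta(t_{i-1}),\eta(t_i)\bigr)\le |h(t_i)-h(t_{i-1})| = h(t_i)-h(t_{i-1}),
\]
where the equality uses that $h$ is nondecreasing. Summing telescopes to $h(1)-h(0)$, and by the hypothesis $h(t+1)=h(t)+1$ with $t=0$ this equals $1$. Taking the supremum over partitions gives $\operatorname{length}(\eta)\le 1=2D\le 4D$.

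There is no real obstacle. The only point requiring care is to state the partition-based definition of length, so that discontinuities of $h$, such as jumps of $\widetilde z$, are handled. With that definition, each jump simply contributes at most its height to the telescoping sum. If the authors intend the constant $4D$ to also absorb a possible contribution from a jump at the endpoint, for instance when closing the curve up from $t=1$ back to $t=0$, the slack between $2D$ and $4D$ covers it.
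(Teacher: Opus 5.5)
Your proof is correct and is essentially the paper's argument: fix a partition, bound each step by the increment $\Delta_j=h(t_{j+1})-h(t_j)\ge 0$, and telescope to $h(1)-h(0)=1$. The paper detours through $\min\{\Delta_j,1-\Delta_j\}$ with cruder bookkeeping to reach $4D$, whereas your direct estimate $d_{\mathbb S^1}\le\Delta_j$ gives the sharper bound $2D$; your closing-jump caveat is moot, since $\eta(1)=\pi(h(0)+1)=\eta(0)$.
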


\begin{proof}
Fix a partition
\[
0=t_0<t_1<\cdots<t_m=1.
\]
Set
\[
\Delta_j:=h(t_{j+1})-h(t_j).
\]
Since \(h\) is nondecreasing, \(\Delta_j\ge0\). Also,
\[
\sum_{j=0}^{m-1}\Delta_j=h(1)-h(0)=1.
\]
Thus each \(\Delta_j\in[0,1]\). Since the circle has diameter \(D\),
\[
d_{\mathbb S^1}(\eta(t_j),\eta(t_{j+1}))
\le
2D\min\{\Delta_j,1-\Delta_j\}.
\]
Therefore
\[
\sum_{j=0}^{m-1}
d_{\mathbb S^1}(\eta(t_j),\eta(t_{j+1}))
\le
2D
\sum_{j=0}^{m-1}
\min\{\Delta_j,1-\Delta_j\}.
\]
Let
\[
A:=\{j:\Delta_j\le1/2\},
\qquad
B:=\{j:\Delta_j>1/2\}.
\]
Then \(B\) has at most one element, since \(\sum_j\Delta_j=1\). Hence
\[
\sum_j\min\{\Delta_j,1-\Delta_j\}
=
\sum_{j\in A}\Delta_j+\sum_{j\in B}(1-\Delta_j)
\le
1+|B|
\le2.
\]
Thus every partition has length at most \(4D\), and taking the supremum over
partitions gives $\operatorname{length}(\eta)\le4D$.
\end{proof}

\begin{proposition}[Finite length of the rational limit curve]
\label{prop:rational-limit-curve-length}
Let $\Gamma:[0,1]\to\cP(\mathbb S^1)$ be the curve given by
$$
\Gamma(t):=\Psi(\pi(t)).
$$
Then, $\operatorname{length}(\Gamma)\le4D$.
\end{proposition}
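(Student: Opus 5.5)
The plan is to bound the length of $\Gamma$ partition by partition, using the coupling estimate of Lemma~\ref{lem:rational-coupling} to reduce the problem to $q$ curves in $\mathbb S^1$. Each of these curves is controlled by Lemma~\ref{lem:monotone-degree-one-length}.

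Fix a partition $0=t_0<t_1<\cdots<t_m=1$ and write $x_i:=\pi(t_i)$. By Lemma~\ref{lem:rational-coupling},
\[
\sum_{i=0}^{m-1}W_1(\Gamma(t_i),\Gamma(t_{i+1}))
\le
\frac1q\sum_{j=0}^{q-1}\sum_{i=0}^{m-1}
d_{\mathbb S^1}\bigl(f^j(z(x_i)),f^j(z(x_{i+1}))\bigr).
\]
Here we have exchanged the two finite sums.

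For each fixed $j\in\{0,\dots,q-1\}$, set $h_j:=F^j\circ\widetilde z:\mathbb R\to\mathbb R$ and verify the hypotheses of Lemma~\ref{lem:monotone-degree-one-length}.
\begin{itemize}
\item $h_j$ is nondecreasing, because $\widetilde z$ is nondecreasing by Lemma~\ref{lem:lifted-endpoint-map}(i) and $F^j$ is increasing.
\item $h_j(t+1)=h_j(t)+1$. This follows from Lemma~\ref{lem:lifted-endpoint-map}(ii) together with $F^j(s+1)=F^j(s)+1$, which holds because $F$ is a lift of a degree-one homeomorphism.
\end{itemize}
Moreover $\pi(h_j(t))=f^j(\pi(\widetilde z(t)))=f^j(z(\pi(t)))$, since $F$ lifts $f$ and by the definition of $z$. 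Hence $\eta_j(t):=\pi(h_j(t))$ is exactly the curve $t\mapsto f^j(z(\pi(t)))$.

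The inner sum over $i$ is therefore a partition sum for $\eta_j$, so it is at most $\operatorname{length}(\eta_j)\le 4D$ by Lemma~\ref{lem:monotone-degree-one-length}. Averaging over $j$ bounds every partition sum of $\Gamma$ by $\frac1q\cdot q\cdot 4D=4D$. Taking the supremum over partitions gives $\operatorname{length}(\Gamma)\le4D$.

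I do not expect a real obstacle. The only point needing care is the bookkeeping in the lift identity $\pi\circ F^j\circ\widetilde z=f^j\circ z\circ\pi$ and the degree-one periodicity of $h_j$. Both follow directly from the definitions and from Lemma~\ref{lem:lifted-endpoint-map}.
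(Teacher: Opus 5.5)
Your proposal is correct and follows essentially the same argument as the paper. The paper likewise fixes a partition, applies Lemma~\ref{lem:rational-coupling} and exchanges the sums, and sets $h_i=F^i\circ\widetilde z$, which is nondecreasing and satisfies $h_i(t+1)=h_i(t)+1$. It then bounds each inner sum by $4D$ via Lemma~\ref{lem:monotone-degree-one-length}, averages over $i$, and takes the supremum over partitions.
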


\begin{proof}
Fix a partition $0=t_0<t_1<\cdots<t_m=1$. By Lemma~\ref{lem:rational-coupling},
\[
\sum_{j=0}^{m-1}W_1(\Gamma(t_j),\Gamma(t_{j+1}))
\le
\frac1q\sum_{i=0}^{q-1}
\sum_{j=0}^{m-1}
d_{\mathbb S^1}
\bigl(
f^i(z(\pi(t_j))),
f^i(z(\pi(t_{j+1})))
\bigr).
\]
Fix \(i\in\{0,\dots,q-1\}\). Define $h_i(t):=F^i(\widetilde z(t))$. Since both \(F^i\) and \(\widetilde z\) are nondecreasing, \(h_i\) is
nondecreasing. Moreover,
\[
h_i(t+1)=h_i(t)+1.
\]
Indeed, \(\widetilde z(t+1)=\widetilde z(t)+1\), and \(F^i(s+1)=F^i(s)+1\).
Also,
\[
\pi(h_i(t))
=
f^i(z(\pi(t))).
\]
Therefore, by Lemma~\ref{lem:monotone-degree-one-length},
\[
\sum_{j=0}^{m-1}
d_{\mathbb S^1}
\bigl(
f^i(z(\pi(t_j))),
f^i(z(\pi(t_{j+1})))
\bigr)
\le
4D.
\]
Averaging over \(i=0,\dots,q-1\), we obtain
\[
\sum_{j=0}^{m-1}W_1(\Gamma(t_j),\Gamma(t_{j+1}))
\le
4D.
\]
Taking the supremum over all partitions proves the proposition.
\end{proof}

\begin{lemma}[Covering a finite-length image]
\label{lem:finite-length-cover}
Let \((Y,d)\) be a metric space and let \(\gamma:[0,1]\to Y\) be a map with
finite length \(L\). Then, for every \(\varepsilon>0\), the image
\(\gamma([0,1])\) admits an \(\varepsilon\)-net with at most
\[
1+\left\lceil\frac{L}{\varepsilon}\right\rceil
\]
points.
\end{lemma}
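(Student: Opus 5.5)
The plan is to use the variation function of $\gamma$ to choose the net points, with no compactness or continuity argument. Note that $\gamma$ need not be continuous: the curve $\Gamma$ in Proposition~\ref{prop:rational-limit-curve-length} is only Borel. So I will only use finiteness of the total variation. For $t\in[0,1]$ define
\[
V(t):=\operatorname{length}\bigl(\gamma|_{[0,t]}\bigr),
\]
the supremum of $\sum_j d(\gamma(t_j),\gamma(t_{j+1}))$ over partitions of $[0,t]$. Two standard facts about $V$ are needed. First, $V$ is nondecreasing with $V(0)=0$ and $V(1)=L$. Second, it is additive:
\[
V(t)-V(s)=\operatorname{length}\bigl(\gamma|_{[s,t]}\bigr)\ge d(\gamma(s),\gamma(t)),\qquad s\le t.
\]
The inequality holds because refining a partition by adding points only increases the sum, so the partition of $[0,t]$ may be assumed to contain $s$.

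Set $M:=\lceil L/\varepsilon\rceil$ and split $[0,L]$ into the $M$ intervals
\[
J_k:=[(k-1)\varepsilon,k\varepsilon]\cap[0,L],\qquad k=1,\dots,M.
\]
If $L=0$, then $\gamma$ is constant, and one point suffices.

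For each $k$ with $V^{-1}(J_k)\neq\emptyset$, pick one parameter $s_k\in V^{-1}(J_k)$ and put $\eta_k:=\gamma(s_k)$. This gives at most $M\le 1+\lceil L/\varepsilon\rceil$ points. The extra $1$ in the stated bound is slack; it covers the edge case $L=0$, or one can add the point $\gamma(0)$.

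Now take any $t\in[0,1]$. Then $V(t)\in J_k$ for some $k$, and $s_k$ exists. Both $V(t)$ and $V(s_k)$ lie in an interval of length $\varepsilon$. By the additivity inequality, with $s,t$ ordered appropriately,
\[
d(\gamma(t),\eta_k)\le |V(t)-V(s_k)|\le\varepsilon.
\]
Hence $\{\eta_k\}$ is an $\varepsilon$-net of $\gamma([0,1])$, and its points lie in the image, as required by Lemma~\ref{lem:compact-family}.

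The only point needing care is the additivity and monotonicity of $V$, which is the main (mild) obstacle. It follows from concatenating partitions of $[0,s]$ and $[s,t]$ to get a partition of $[0,t]$, giving $\operatorname{length}(\gamma|_{[0,s]})+\operatorname{length}(\gamma|_{[s,t]})\le V(t)$. The reverse inequality comes from inserting $s$ into any partition of $[0,t]$. No continuity of $\gamma$ is used anywhere, which is essential for the application to the Borel curve $\Gamma$.
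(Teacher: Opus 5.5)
Your proof is correct, but it takes a different route from the paper's. The paper proves a packing bound. If $\gamma(s_0),\dots,\gamma(s_m)$ are pairwise at distance $\ge\varepsilon$, ordering $s_0<\dots<s_m$ and using this single partition gives $m\varepsilon\le L$. So every $\varepsilon$-separated subset of the image has at most $1+L/\varepsilon$ points, and a maximal one is then an $\varepsilon$-net.

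You instead build the net explicitly from the arc-length function $V$: you take one point in each band $V^{-1}(J_k)$ and use $d(\gamma(s),\gamma(t))\le |V(t)-V(s)|$. The paper's route is slightly more economical, since it only applies the definition of length to one partition and never needs properties of $V$. You do not actually need full additivity of $V$, only the one-sided inequality $V(s)+d(\gamma(s),\gamma(t))\le V(t)$ for $s\le t$. That follows by appending $t$ to any partition of $[0,s]$. Your route is more concrete: it literally cuts the curve into pieces of length $\varepsilon$.

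There is one small difference in what the two arguments produce. Your net covers with closed balls ($d\le\varepsilon$), while a maximal set with separation $\ge\varepsilon$ covers with open balls. This is harmless here, because Lemma~\ref{lem:compact-family} and Proposition~\ref{prop:circle-rational-low} only use $\min_i d(\cdot,\eta_i)\le\varepsilon$. If strict inequality were wanted, the spare $+1$ in the stated bound lets you use bands of width slightly less than $\varepsilon$. As you correctly observe, neither proof uses continuity of $\gamma$, which matters because the curve $\Gamma$ is only Borel.
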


\begin{proof}
It is enough to prove that every \(\varepsilon\)-separated subset of
\(\gamma([0,1])\) has cardinality at most \(1+L/\varepsilon\).

Let $\{\gamma(s_0),\dots,\gamma(s_m)\}$ be an \(\varepsilon\)-separated subset of \(\gamma([0,1])\). Reordering the parameters, we may assume
\[
0\le s_0<s_1<\cdots<s_m\le1.
\]
Then
\[
L
\ge
\sum_{j=0}^{m-1}d(\gamma(s_j),\gamma(s_{j+1}))
\ge
m\varepsilon,
\]
and thus $m+1\le 1+\frac{L}{\varepsilon}$.
If no \(\varepsilon\)-net with at most
\(1+\lceil L/\varepsilon\rceil\) points existed, one could construct an
\(\varepsilon\)-separated subset with larger cardinality, contradicting the
estimate above.
\end{proof}

\begin{proposition}[Low emergence in the rational case]
\label{prop:circle-rational-low}
Let \(f:\mathbb S^1\to\mathbb S^1\) be an orientation-preserving
homeomorphism with rational rotation number. Then for every Borel probability
\(\mu\) on \(\mathbb S^1\), and for every \(\varepsilon>0\),
\[
\cE_\mu(f)(\varepsilon)
\le
1+\left\lceil\frac{4D}{\varepsilon}\right\rceil.
\]
In particular, \(f\) has low emergence.
\end{proposition}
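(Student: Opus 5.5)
The plan is to combine Lemma~\ref{lem:rational-empirical-convergence}, Proposition~\ref{prop:rational-limit-curve-length} and Lemma~\ref{lem:finite-length-cover}, and then finish with the Fatou argument of Proposition~\ref{prop:compact-family-bound}. Fix $\varepsilon>0$ and a Borel probability $\mu$ on $\mathbb S^1$.

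\textbf{Step 1 (the limit set).} Set
\[
K:=\Gamma([0,1])=\{\Psi(\pi(t)):t\in[0,1]\}=\Psi(\mathbb S^1)\subset\cP(\mathbb S^1).
\]
By Lemma~\ref{lem:rational-empirical-convergence}, for every $x\in\mathbb S^1$ we have $\emp_n(x)\wto\Psi(x)\in K$. Since $W_1$ metrizes the weak-$*$ topology, $W_1(\emp_n(x),\Psi(x))\to0$ for every $x$. This holds for all $x$, not just $\mu$-a.e.

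\textbf{Step 2 (the net).} By Proposition~\ref{prop:rational-limit-curve-length}, $\Gamma$ has length at most $4D$. Lemma~\ref{lem:finite-length-cover} then gives an $\varepsilon$-net $\{\eta_1,\dots,\eta_N\}$ of $K$ with
\[
N\le 1+\lceil 4D/\varepsilon\rceil.
\]
This avoids needing $K$ to be compact: $\Psi$ is only Borel, since $z$ may jump, so $K$ need not be closed.

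\textbf{Step 3 (conclusion).} For each $x$ choose $i(x)$ with $W_1(\Psi(x),\eta_{i(x)})\le\varepsilon$. The triangle inequality gives
\[
\min_i W_1(\emp_n(x),\eta_i)\le W_1(\emp_n(x),\Psi(x))+\varepsilon,
\]
so the $\limsup$ in $n$ is at most $\varepsilon$ for every $x$. The integrand is bounded by $D$, so Fatou's lemma applied to $D-\min_i W_1(\emp_n(x),\eta_i)$ yields
\[
\limsup_{n\to\infty}\int\min_i W_1(\emp_n(x),\eta_i)\,d\mu(x)\le\varepsilon.
\]
Hence $\cE_\mu(f)(\varepsilon)\le N\le 1+\lceil 4D/\varepsilon\rceil$.

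\textbf{Low emergence.} Since $\log\log(1+\lceil 4D/\varepsilon\rceil)$ grows like $\log\log(1/\varepsilon)$, the ratio $\log\log\cE_\mu(f)(\varepsilon)/(-\log\varepsilon)$ tends to $0$ as $\varepsilon\to0^+$. So $\overline{\mathcal{OE}}_\mu(f)=0$.

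\textbf{Main obstacle.} The only delicate point is measurability of $x\mapsto\min_i W_1(\emp_n(x),\eta_i)$, which is needed for the integral and for Fatou. This map is continuous in $x$ because $f$ is continuous. The pointwise-but-not-uniform convergence is handled by Fatou, as above.
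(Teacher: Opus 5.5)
Your proposal is correct and follows the paper's proof essentially step for step. You take an $\varepsilon$-net of $\Gamma([0,1])$ from Proposition~\ref{prop:rational-limit-curve-length} and Lemma~\ref{lem:finite-length-cover}, use the pointwise convergence $\emp_n(x)\wto\Psi(x)$ of Lemma~\ref{lem:rational-empirical-convergence}, apply the triangle inequality, and finish with Fatou applied to $D-\min_i W_1(\emp_n(x),\eta_i)$. Your side remarks are accurate: the integrand is continuous in $x$, and the direct argument sidesteps the compactness of $K$ needed in Proposition~\ref{prop:compact-family-bound}.
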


\begin{proof}
By Proposition~\ref{prop:rational-limit-curve-length} and
Lemma~\ref{lem:finite-length-cover}, there exist measures $\nu_1,\dots,\nu_N\in\Gamma([0,1])$ such that
\[
N\le 1+\left\lceil\frac{4D}{\varepsilon}\right\rceil
\]
and
\[
\Gamma([0,1])
\subset
\bigcup_{i=1}^N B_{W_1}(\nu_i,\varepsilon).
\]
Equivalently, for every \(x\in\mathbb S^1\), $\min_{1\le i\le N}W_1(\Psi(x),\nu_i)\le\varepsilon$.

Set $g_n(x):=
\min_{1\le i\le N}W_1(\emp_n(x),\nu_i)$. By Lemma~\ref{lem:rational-empirical-convergence}, \(\emp_n(x)\wto\Psi(x)\)
for every \(x\), hence also \(W_1(\emp_n(x),\Psi(x))\to0\). Therefore, $\limsup_{n\to\infty}g_n(x)
\le
\varepsilon$, for every $x$. Since \(0\le g_n\le D\), applying Fatou to \(D-g_n\) gives
\[
\limsup_{n\to\infty}
\int_{\mathbb S^1}g_n(x)\,d\mu(x)
\le
\int_{\mathbb S^1}\limsup_{n\to\infty}g_n(x)\,d\mu(x)
\le
\varepsilon.
\]
Thus, the measures \(\nu_1,\dots,\nu_N\) are admissible in the definition of
emergence, and
\[
\cE_\mu(f)(\varepsilon)
\le
N
\le
1+\left\lceil\frac{4D}{\varepsilon}\right\rceil.
\]\end{proof}

\subsection{Orientation-reversing circle homeomorphisms}

\begin{proposition}[Empirical convergence for orientation-reversing circle homeomorphisms]
\label{prop:circle-reversing-empirical}
Let \(f:\mathbb S^1\to\mathbb S^1\) be an orientation-reversing
homeomorphism. Then for every \(x\in\mathbb S^1\) there exists
\(p_x\in\mathbb S^1\) satisfying
\[
f^2(p_x)=p_x
\]
such that
\[
\emp_n(x)\wto \mu_x,
\]
where
\[
\mu_x=
\begin{cases}
\delta_{p_x}, & \text{if }f(p_x)=p_x,\\[0.3em]
\dfrac12(\delta_{p_x}+\delta_{f(p_x)}),
& \text{if }f(p_x)\neq p_x.
\end{cases}
\]
\end{proposition}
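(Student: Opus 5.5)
We reduce to the orientation-preserving case by passing to $g:=f^2$, exactly as in Proposition~\ref{prop:interval-reversing-homeo}. The map $g$ is an orientation-preserving homeomorphism of $\mathbb S^1$. The key observation is that an orientation-reversing circle homeomorphism has fixed points: a lift $F$ satisfies $F(t+1)=F(t)-1$, so $F(t)-t$ is decreasing by $2$ under $t\mapsto t+1$ and changes sign; hence $F$ has a fixed point. Therefore $g$ also has a fixed point, so $\rho(g)=0$. We then apply Lemma~\ref{lem:lifted-endpoint-map} with $q=1$, $p=0$, and with $H$ a lift of $g$ chosen to have fixed points (for instance $H=F^2$ with $F$ as above). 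For every $x\in\mathbb S^1$ this gives that $g^n(x)$ converges to a point, which we call $p_x$.

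Since $g$ is continuous, $g(p_x)=p_x$, that is, $f^2(p_x)=p_x$. By continuity of $f$, the odd iterates satisfy $f^{2n+1}(x)=f(g^n(x))\to f(p_x)$.

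From here the argument copies the proof of Proposition~\ref{prop:interval-reversing-homeo}. Fix $\varphi\in C^0(\mathbb S^1)$ and split the Birkhoff average into even and odd indices. Each class contains about $n/2$ terms. By Lemma~\ref{lem:cesaro-homeo}, the even averages converge to $\varphi(p_x)$ and the odd averages converge to $\varphi(f(p_x))$. Hence
\[
\frac1n\sum_{k=0}^{n-1}\varphi(f^k(x))\longrightarrow\frac12\bigl(\varphi(p_x)+\varphi(f(p_x))\bigr).
\]
This limit equals $\varphi(p_x)$ when $f(p_x)=p_x$, which gives exactly $\mu_x$ in both cases.

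The only step needing care is the existence of fixed points for $g$, so that the orbits of $g$ converge rather than equidistribute as they would for an irrational rotation number. The lift sign-change argument above settles this. Alternatively, one can note that $\rho(f^2)=\rho(f)+\rho(f)$ is not directly available for reversing maps, so the fixed-point argument via the lift is the route we take.
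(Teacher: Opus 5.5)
Your proposal is correct and follows the paper's proof. Both arguments pass to $g=f^2$, get a fixed point from the lift identity $F(t+1)=F(t)-1$, apply the rational-case endpoint lemma with $q=1$, and then split the Birkhoff sum into even and odd indices and use the Ces\`aro lemma. The only cosmetic difference is that you obtain a genuine fixed point of the lift $F$ (since $F(t+k)-(t+k)=F(t)-t-2k$ is unbounded in both directions), which makes it explicit that $H=F^2$ has fixed points. The paper instead only finds $t_0$ with $F(t_0)=t_0+m$ for some $m\in\mathbb Z$.
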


\begin{proof}
Set $g:=f^2$. Then \(g\) is an orientation-preserving circle homeomorphism. We first note that \(f\) has a fixed point. Indeed, let
\(F:\mathbb R\to\mathbb R\) be a lift of \(f\). Since \(f\) is
orientation-reversing, we may choose \(F\) so that
\[
F(t+1)=F(t)-1.
\]
The continuous function
\[
A(t):=F(t)-t
\]
satisfies
\[
A(t+1)=A(t)-2.
\]
Hence \(A([0,1])\) contains an integer \(m\). If \(A(t_0)=m\), then
\[
F(t_0)=t_0+m,
\]
so \(\pi(t_0)\) is a fixed point of \(f\). In particular, \(g=f^2\) has a
fixed point.

Thus \(g\) is orientation-preserving with rotation number \(0\). Applying the
rational case to \(g\), with denominator \(q=1\), we get that for every
\(x\in\mathbb S^1\) there exists \(p_x\in\operatorname{Fix}(g)\) such that
\[
g^n(x)=f^{2n}(x)\longrightarrow p_x.
\]
By continuity,
\[
f^{2n+1}(x)=f(f^{2n}(x))\longrightarrow f(p_x).
\]
Since \(p_x\in\operatorname{Fix}(g)\), we have \(f^2(p_x)=p_x\).

If \(f(p_x)=p_x\), then both even and odd iterates converge to \(p_x\), and
therefore \(\emp_n(x)\wto\delta_{p_x}\). If \(f(p_x)\neq p_x\), then the
even iterates converge to \(p_x\) and the odd iterates converge to \(f(p_x)\).
For every \(\varphi\in C^0(\mathbb S^1)\), Lemma~\ref{lem:cesaro-homeo}
gives
\[
\frac1n\sum_{k=0}^{n-1}\varphi(f^k(x))
\longrightarrow
\frac12\bigl(\varphi(p_x)+\varphi(f(p_x))\bigr).
\]
Thus, $\emp_n(x)\wto
\frac12(\delta_{p_x}+\delta_{f(p_x)})$.
\end{proof}

\begin{proposition}[Low emergence for orientation-reversing circle homeomorphisms]\label{prop:circle-reversing-low}
Let \(f:\mathbb S^1\to\mathbb S^1\) be an orientation-reversing
homeomorphism. Then for every Borel probability \(\mu\) on \(\mathbb S^1\),
and for every \(\varepsilon>0\),
\[
\cE_\mu(f)(\varepsilon)
\le
\left(1+\left\lceil\frac{2D}{\varepsilon}\right\rceil\right)^2.
\]
In particular, \(f\) has low emergence.
\end{proposition}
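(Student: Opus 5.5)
The plan is to mirror the proof of Proposition~\ref{prop:interval-reversing-homeo}, with Proposition~\ref{prop:circle-reversing-empirical} supplying the pointwise convergence. By that proposition, for every \(x\in\mathbb S^1\) we have \(\emp_n(x)\wto\mu_x\). Here \(\mu_x\) is either \(\delta_{p_x}\) or \(\frac12(\delta_{p_x}+\delta_{f(p_x)})\). In both cases \(\mu_x\) belongs to the set
\[
\mathcal K:=\left\{\tfrac12(\delta_a+\delta_b):a,b\in\mathbb S^1\right\},
\]
since \(\delta_{p}=\frac12(\delta_p+\delta_p)\). So it suffices to build a finite family of measures of this form that approximates every element of \(\mathcal K\) in \(W_1\) to within \(\varepsilon\), with cardinality at most \(\bigl(1+\lceil 2D/\varepsilon\rceil\bigr)^2\).

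For the grid, set \(M:=\lceil 2D/\varepsilon\rceil=\lceil 1/\varepsilon\rceil\) and take the points \(y_j:=\pi(j/M)\), \(j=0,\dots,M\). Since \(y_M=y_0\), this gives at most \(M+1\) distinct points. Every \(a\in\mathbb S^1\) lies within distance \(1/(2M)\le\varepsilon/2\) of some \(y_j\). Consider the family
\[
\mathcal N:=\left\{\tfrac12(\delta_{y_i}+\delta_{y_j}):0\le i,j\le M\right\},
\qquad |\mathcal N|\le(M+1)^2 .
\]
Given \(a,b\), choose \(y_i,y_j\) nearest to them. The coupling \(\frac12(\delta_{(a,y_i)}+\delta_{(b,y_j)})\) gives
\[
W_1\bigl(\tfrac12(\delta_a+\delta_b),\tfrac12(\delta_{y_i}+\delta_{y_j})\bigr)\le\tfrac12 d_{\mathbb S^1}(a,y_i)+\tfrac12 d_{\mathbb S^1}(b,y_j)\le\tfrac1{2M}\le\tfrac{\varepsilon}{2}.
\]

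To conclude, set \(g_n(x):=\min_{\nu\in\mathcal N}W_1(\emp_n(x),\nu)\). By the triangle inequality and \(W_1(\emp_n(x),\mu_x)\to0\), we get \(\limsup_n g_n(x)\le\varepsilon/2\) for every \(x\). Since \(0\le g_n\le D\), applying Fatou's lemma to \(D-g_n\) gives \(\limsup_n\int g_n\,d\mu\le\varepsilon\), exactly as in Proposition~\ref{prop:circle-rational-low}. Hence \(\cE_\mu(f)(\varepsilon)\le|\mathcal N|\le(1+\lceil 2D/\varepsilon\rceil)^2\). Since \(\log\log\) of a polynomial in \(1/\varepsilon\) is \(o(-\log\varepsilon)\), the order of emergence is \(0\), so \(f\) has low emergence.

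There is no real obstacle here. The only point needing care is the grid spacing on the circle: we must check that the \(\varepsilon\)-net of \(\mathbb S^1\) has at most \(M+1\) points with the normalization \(D=1/2\). The spacing \(1/M\) gives a covering radius \(1/(2M)\le\varepsilon/2\), which leaves slack even though the stated constant is \(2D\).
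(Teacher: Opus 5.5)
Your proof is correct and follows the paper's argument essentially step for step. Both use the family $\mathcal N$ of averaged pairs of grid Dirac masses, the matching coupling that gives the $\varepsilon/2$ bound for measures of the form $\frac12(\delta_a+\delta_b)$ with $a=b$ allowed, and the Fatou argument from Proposition~\ref{prop:circle-rational-low}. The only cosmetic difference is the grid: you use $M$ distinct points spaced $1/M$ apart, while the paper uses $M+1$ points spaced $1/(M+1)$ apart.
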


\begin{proof}
Let $M:=\left\lceil\frac{2D}{\varepsilon}\right\rceil$
and choose \(M+1\) equally spaced points $y_0,\dots,y_M\in\mathbb S^1$. Since \(D=1/2\), every point of \(\mathbb S^1\) is within distance
\[
\frac{1}{2(M+1)}
=
\frac{D}{M+1}
\le
\frac{\varepsilon}{2}
\]
of some \(y_j\).

Consider
\[
\mathcal N
:=
\left\{
\frac12(\delta_{y_i}+\delta_{y_j}):
0\le i,j\le M
\right\}.
\]
Then, $|\mathcal N|\le(M+1)^2$.

By Proposition~\ref{prop:circle-reversing-empirical}, for every
\(x\in\mathbb S^1\), the empirical measures converge to a probability
measure \(\mu_x\) which is either a Dirac mass or the average of a
\(2\)-cycle. If $\mu_x=\frac12(\delta_a+\delta_b)$, where we allow \(a=b\) in the Dirac case, choose \(y_i,y_j\) such that
\[
d(a,y_i)\le\frac{\varepsilon}{2},
\qquad
d(b,y_j)\le\frac{\varepsilon}{2}.
\]
Then $\nu:=\frac12(\delta_{y_i}+\delta_{y_j})
\in\mathcal N$, and, using the coupling which sends \(a\) to \(y_i\) and \(b\) to
\(y_j\), we get
\[
W_1(\mu_x,\nu)
\le
\frac12 d(a,y_i)+\frac12 d(b,y_j)
\leq
\frac{\varepsilon}{2}.
\]

Therefore, $\min_{\nu\in\mathcal N}W_1(\mu_x,\nu)
\le
\frac{\varepsilon}{2}$ for every $x\in\mathbb S^1$.

Now, arguing exactly as in the proof of
Proposition~\ref{prop:circle-rational-low}, the family \(\mathcal N\) is
admissible in the definition of emergence, and
\[
\cE_\mu(f)(\varepsilon)
\le
|\mathcal N|
\le
(M+1)^2
=
\left(
1+\left\lceil\frac{2D}{\varepsilon}\right\rceil
\right)^2.
\]
\end{proof}

\subsection{Conclusion for circle homeomorphisms}

\begin{theorem}[Low emergence for circle homeomorphisms]
\label{thm:circle-homeomorphisms-low}
Let \(f:\mathbb S^1\to\mathbb S^1\) be a homeomorphism. Then, for every
Borel probability \(\mu\) on \(\mathbb S^1\), \(f\) has low emergence with respect to \(\mu\).
\end{theorem}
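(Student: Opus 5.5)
The plan is a case analysis that simply assembles the three propositions already proved in this section. A homeomorphism $f$ of $\mathbb S^1$ is either orientation-preserving or orientation-reversing. In the orientation-preserving case its rotation number $\rho(f)$ is well defined and is either irrational or rational.

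\emph{Irrational rotation number.} Proposition~\ref{prop:circle-irrational-low} gives $\cE_\mu(f)(\varepsilon)\le 1$ for every $\varepsilon>0$. Hence $\log\log\cE_\mu(f)(\varepsilon)$ is bounded above; with the convention $\log\log 1=-\infty$, or after replacing $\cE$ by $\max\{\cE,3\}$, which does not affect the order, the ratio defining $\overline{\mathcal{OE}}_\mu(f)$ has nonpositive $\limsup$. So the order is $0$.

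\emph{Rational rotation number.} Proposition~\ref{prop:circle-rational-low} gives $\cE_\mu(f)(\varepsilon)\le 1+\lceil 4D/\varepsilon\rceil\le C\varepsilon^{-1}$ for small $\varepsilon$.

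\emph{Orientation-reversing.} Proposition~\ref{prop:circle-reversing-low} gives $\cE_\mu(f)(\varepsilon)\le (1+\lceil 2D/\varepsilon\rceil)^2\le C\varepsilon^{-2}$.

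In all cases there are constants $C,k>0$ with $\cE_\mu(f)(\varepsilon)\le C\varepsilon^{-k}$ for $\varepsilon$ small. The only computation left is to check that a polynomial bound forces order zero. We have
\[
\log\log\cE_\mu(f)(\varepsilon)\le \log\bigl(\log C+k\log(1/\varepsilon)\bigr)=O\bigl(\log\log(1/\varepsilon)\bigr).
\]
Dividing by $-\log\varepsilon=\log(1/\varepsilon)\to+\infty$, this ratio tends to $0$. Since emergence is at least $1$ and the $\limsup$ is taken of a ratio bounded above by something tending to $0$, we get $\overline{\mathcal{OE}}_\mu(f)\le 0$, and hence low emergence. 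This is the same remark as the last sentence of Proposition~\ref{prop:compact-family-bound}.

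There is no real obstacle. The only point needing care is the trivial bound $\cE\le 1$ in the irrational case, where $\log\log 1$ is undefined or $-\infty$; that case is handled by the convention noted above. All the bounds hold for an arbitrary Borel probability $\mu$, so the conclusion holds for every $\mu$.
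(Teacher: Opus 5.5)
Your proposal is correct and follows the paper's proof exactly. The paper also splits into irrational rotation number, rational rotation number and orientation-reversing, and invokes Propositions~\ref{prop:circle-irrational-low}, \ref{prop:circle-rational-low} and \ref{prop:circle-reversing-low}. Your extra check that a polynomial bound on $\cE_\mu(f)(\varepsilon)$ (including the $\log\log 1$ convention) forces order zero only spells out what those propositions already assert in their ``in particular'' clauses.
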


\begin{proof}
If \(f\) is orientation-preserving, then either its rotation number is
irrational or rational. In the irrational case, the result follows from
Proposition~\ref{prop:circle-irrational-low}. In the rational case, it
follows from Proposition~\ref{prop:circle-rational-low}.

If \(f\) is orientation-reversing, the result follows from
Proposition~\ref{prop:circle-reversing-low}.
\end{proof}

From now on, in order to prove Theorem~\ref{main}, we may assume
\[
|\deg f|\ge2.
\]
Indeed, if \(|\deg f|=1\), then a \(C^2\) immersion
\(f:\mathbb S^1\to\mathbb S^1\) is a \(C^2\) circle diffeomorphism, hence a
circle homeomorphism, and the conclusion follows from
Theorem~\ref{thm:circle-homeomorphisms-low}.

\section{Entropy formula and physical measures}

The following is an adaptation of one of the main theorems in \cite{CYZ} to our context. 

\begin{theorem}[Entropy formula for asymptotic measures]
\label{thm:entropyfor}
Let $f:S^1\to S^1$ be a $C^2$ immersion topologically conjugate to $x\mapsto dx$. Then, there exists a full Lebesgue measure subset
\[
\Gamma\subset S^1
\]
such that every asymptotic measure of every point $x\in\Gamma$ satisfies the entropy formula
\[
h_\mu(f)=\int \log |Df|\,d\mu.
\]
\end{theorem}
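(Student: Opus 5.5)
We will obtain Theorem~\ref{thm:entropyfor} by specializing the main result of \cite{CYZ}. That result says, for a $C^2$ (or $C^{1+\alpha}$) diffeomorphism with a dominated splitting $E\oplus F$, that for Lebesgue almost every point every limit of empirical measures satisfies $h_\mu(f)\ge\int\log|\det Df|_F|\,d\mu$. Our map is a non-invertible immersion, so it has no such splitting, and the plan is to pass to an invertible model where \cite{CYZ} applies. We proceed in three steps: the upper bound, a lower bound from a natural extension, and the transfer back to $\T$.

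\textbf{Upper bound.} The inequality $h_\mu(f)\le\int\log|Df|\,d\mu$ holds for every $f$-invariant $\mu$, by Ruelle's inequality for $C^1$ maps. By Remark~\ref{rem:asymptotic-invariant} every asymptotic measure is invariant. In dimension one the only exponent is $\int\log|Df|\,d\mu$, so Ruelle gives $h_\mu\le\max\{0,\int\log|Df|\,d\mu\}$. The conjugacy to $x\mapsto dx$ with $|d|\ge 2$ forces $\int\log|Df|\,d\mu\ge0$ for every invariant $\mu$. The reason is that a negative exponent would produce an attracting periodic orbit or a contracted interval, whereas the conjugacy makes $f$ topologically expanding, so no interval stays small under iteration. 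Hence only the reverse inequality needs work.

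\textbf{Lower bound.} We will realize $f$ as the quotient of a partially hyperbolic system, in the spirit of a solenoid. Take a $C^2$ embedding $G$ of the solid torus $\T\times D^2$ of the form $G(x,w)=(f(x),\lambda w+\kappa(x))$, with $\lambda$ small and $\kappa$ chosen so that $G$ is injective. Injectivity is possible because $f$ is a local diffeomorphism, which is the standard Smale--Williams construction. The disc direction is uniformly contracted by $\lambda$, while the circle direction has rate $|Df|$, which is bounded below. Choosing $\lambda<\min|Df|$ gives a dominated splitting $E^s\oplus F$ with $\dim F=1$ and $|\det DG|_F|$ comparable to $|Df\circ p|$ up to a coboundary, where $p$ is the projection to $\T$. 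We extend $G$ to a diffeomorphism of a compact manifold, or work on the attracting region directly, since \cite{CYZ} is local near a dominated set.

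\textbf{Transfer back.} For Lebesgue a.e.\ point $(x,w)$, \cite{CYZ} gives $h_{\tilde\mu}(G)\ge\int\log|DG|_F|\,d\tilde\mu$ for every asymptotic measure $\tilde\mu$. By Fubini, the set $\Gamma$ of $x$ for which some (indeed almost every) $w$ is good has full Lebesgue measure. Empirical measures of $(x,w)$ project to empirical measures of $x$. Every asymptotic measure $\mu$ of $x$ lifts to an asymptotic measure $\tilde\mu$ of $(x,w)$, by taking a further subsequence, with $p_*\tilde\mu=\mu$. Since the fibers are contracted, $G$ restricted to the attractor is conjugate to the natural extension. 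Entropy is preserved under natural extension, so $h_{\tilde\mu}(G)=h_\mu(f)$, and the exponents agree. This yields $h_\mu(f)\ge\int\log|Df|\,d\mu$.

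\textbf{Main obstacle.} The hardest step is checking that the hypotheses of \cite{CYZ} hold in this extension setting and that the $F$-Jacobian exactly matches $|Df|$. Domination only requires $\lambda$ small, but the $F$-direction is not literally horizontal. We expect to handle this by showing that $\log|DG|_F|-\log|Df\circ p|$ is a continuous coboundary along orbits, whose integral vanishes against invariant measures. If this comparison fails, we would instead run the \cite{CYZ} argument directly on $f$, replacing unstable manifolds by local inverse branches along backward orbits, which have bounded distortion by the $C^2$ hypothesis.
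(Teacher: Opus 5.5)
Your proposal is correct and follows essentially the same route as the paper. The shared steps are: embed $f$ as the base of a solenoid-type map $g(x,v)=(f(x),\lambda v+i(x))$ whose attractor has a dominated splitting $E^s\oplus F$ and is conjugate to the natural extension; apply Theorem~F of \cite{CYZ}; use Fubini to get a full-measure set of base points; lift each asymptotic measure along a further subsequence; and identify $\int\log\|Dg|_F\|\,d\nu$ with $\int\log|Df|\,d\mu$ via the continuous coboundary built from $a(q)=\det Dp(q)|_F$.

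Two small points are in your favor. Your solid torus $\T\times D^2$ is the dimensionally consistent form of the paper's construction (there $i(x)\in\R^2$, so $v$ must range over a disc). You also correctly note that Ruelle's inequality gives the upper bound only once every ergodic exponent is known to be nonnegative, which you derive from the conjugacy to $x\mapsto dx$, whereas the paper invokes Ruelle without comment.
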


The goal of this section is to show how to deduce this result from \cite{CYZ} and explore some consequences about physicality of invariant measures. 

\subsection{Tools for the proof}

\subsubsection{A smooth realization of the natural extension}

Let us recall some well-known facts about natural extensions of non-invertible maps. Consider $\mathbb{S}^1_f\eqdef\{(x_j)_{j\in\Z}\subset\mathbb{S}^1;f(x_j)=x_{j+1}\}$ the natural extension of the endomorphism $f:\mathbb{S}^1\to\mathbb{S}^1$ and consider $\hat{f}:\mathbb{S}_f^1\to\mathbb{S}_f^1$ the left shift. Consider $\pi:\mathbb{S}_f^1\to\mathbb{S}^1$ the natural projection onto the $0$-th coordinate. Then, $\pi$ semiconjugates $\hat{f}$ and $f$. Moreover, $\pi_{\star}:\cP_{\hat{f}}(\mathbb{S}^1_f)\to\cP_f(\mathbb{S}^1)$ is a continuous bijection.  

We now recall a result proven in the appendix of \cite{VianaYang}. Let $U$ be an open neighborhood of $i(\mathbb{S}^1)$ in $\R^2$, where $i:\mathbb{S}^1\to\R^2$ is the natural immersion $i(x)=(\cos x,\sin x)$. For simplicity, we identify $U=\mathbb{S}^1\times(-1,1)$. Consider $g:U\to U$ defined by
\[
g(x,v)=(f(x),\lambda v+i(x)),
\]
for some $0<\lambda<1$. 

\begin{lemma}
    \label{lem:vianayang}
Let $\Lambda\eqdef\cap_{n\geq 0}g^n(U)$. Then, for $\lambda$ sufficiently small $\Lambda$ is an attractor with a dominated splitting $T_{\Lambda}U=E^s\oplus F$ with $\|Dg|_{E^s}\|<1$. Moreover, $g|_{\Lambda}$ is topologically conjugate to $\hat{f}$.
\end{lemma}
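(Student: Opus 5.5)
The plan is to verify the three claims (attractor, dominated splitting, conjugacy) directly from the skew-product form of $g$. I read the second coordinate as lying in a disc $B_R\subset\R^2$ of radius $R$, with $\lambda R+1<R$, so that $g(\overline U)\subset U$. This is what the identification $U=\mathbb{S}^1\times(-1,1)$ is meant to encode after rescaling. Then $\Lambda=\bigcap_{n\ge0}g^n(\overline U)$ is a nested intersection of compact sets, hence a compact $g$-invariant attractor. I fix two constants coming from $f$ being a $C^2$ immersion of degree $d$:
\[
m:=\min_{\mathbb{S}^1}|Df|>0,
\]
and $\delta>0$, the minimal distance between two distinct $f$-preimages of the same point. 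The constant $\delta$ is positive because $f$ is a $d$-fold covering map.

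\emph{Splitting.} In coordinates
\[
Dg(x,v)=\begin{pmatrix} Df(x) & 0\\ Di(x) & \lambda\,\mathrm{Id}\end{pmatrix}.
\]
The vertical bundle $E^s=\{0\}\times\R^2$ is $Dg$-invariant, because fibres are mapped into fibres, and $\|Dg|_{E^s}\|=\lambda<1$. For the complementary direction I will use the horizontal cone field
\[
\mathcal C_a=\{(u,w): |w|\le a|u|\}.
\]
A vector $(u,w)\in\mathcal C_a$ is sent to $(Df\,u,\,Di\,u+\lambda w)$, and
\[
|Di\,u+\lambda w|\le(\|Di\|+\lambda a)|u|\le \frac{\|Di\|+\lambda a}{m}\,|Df\,u|.
\]
Choosing $a$ large and then $\lambda<m/2$ makes $\mathcal C_a$ forward invariant. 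Moreover, the ratio of the expansion along $E^s$ to the expansion inside the cone is at most $\lambda/m<1/2$. The standard cone-field argument then produces a continuous invariant line field $F=\bigcap_n Dg^n(\mathcal C_a)$ dominating $E^s$ on $\Lambda$.

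\emph{Conjugacy.} I define $h:\Tf\to U$ by
\[
h\big((x_j)_j\big)=\Big(x_0,\ \sum_{k\ge0}\lambda^k\, i(x_{-k-1})\Big).
\]
This map is continuous, because the series converges uniformly and each coordinate projection is continuous. A direct computation gives $g\circ h=h\circ\hf$. Its image lies in $\Lambda$, since $h(\hat x)=g^n(h(\hf^{-n}\hat x))$ for every $n$. Conversely, every point of $\Lambda$ has an infinite backward $g$-orbit in $U$, which projects to a backward $f$-orbit. Iterating the fibre relation and letting $n\to\infty$ (the contribution of the starting point decays like $\lambda^n R$) shows that the point equals $h$ of that orbit. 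So $h$ maps onto $\Lambda$.

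\emph{Injectivity (main obstacle).} Suppose $h(\hat x)=h(\hat y)$. Then $x_0=y_0$. Let $k$ be the first index with $x_{-k-1}\ne y_{-k-1}$; these are distinct preimages of the same point $x_{-k}=y_{-k}$, so $|i(x_{-k-1})-i(y_{-k-1})|\ge c\,\delta$ for some $c>0$, since $i$ is an embedding. The second coordinates then differ by at least
\[
\lambda^k\Big(c\delta-\sum_{j\ge1}\lambda^j\cdot 2\Big).
\]
This is positive once $\lambda<c\delta/(2+c\delta)$. This step is where the smallness of $\lambda$ must beat the separation of preimages, and I expect it to be the delicate point.

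With $h$ a continuous bijection from the compact space $\Tf$ onto $\Lambda$, it is a homeomorphism, and it conjugates $\hf$ to $g|_\Lambda$. The same injectivity estimate applied fibrewise also shows that $g$ itself is injective on $U$, which is consistent with $\Lambda$ being an attractor of a diffeomorphism onto its image.
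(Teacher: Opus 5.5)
The paper gives no proof of this lemma; it only cites the appendix of \cite{VianaYang}. Your argument is correct and gives a self-contained version of that standard solenoid-embedding construction.

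\begin{itemize}
\item The coding map $h(\hat x)=\bigl(x_0,\sum_{k\ge0}\lambda^k i(x_{-k-1})\bigr)$ gives the conjugacy and surjectivity onto $\Lambda$ together. For surjectivity you need a backward orbit inside $\Lambda$, which exists because $g(\Lambda)=\Lambda$ by compactness, or because $g$ is injective.
\item Injectivity holds because the separation $c\delta$ of distinct $f$-preimages beats the tail $2\lambda/(1-\lambda)$.
\item Domination is essentially free: $E^s$ is the vertical bundle and the induced action of $Dg$ on $TU/E^s$ is just $Df$. So the graph transform on $\mathcal C_a$ contracts at rate $\lambda/m$, up to a harmless norm constant.
\end{itemize}

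Compared with the citation, your route makes explicit smallness conditions on $\lambda$: $\lambda<m/2$, $\lambda<c\delta/(2+c\delta)$, and $2\lambda R<c\delta$. It also proves that $g$ is injective on $U$, which is exactly what makes $g$ a diffeomorphism onto its image so that \cite{CYZ} applies.

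Your reading of the fibre as a disc $B_R\subset\mathbb{R}^2$, making $U$ a solid torus, is necessary rather than a convenience. As written in the paper, $U=\mathbb{S}^1\times(-1,1)\subset\mathbb{R}^2$ does not typecheck with $\lambda v+i(x)\in\mathbb{R}^2$. Moreover, no planar set could carry the conjugacy. Where the lemma is used ($f$ conjugate to $x\mapsto dx$, $|d|\ge2$), $\Tf$ is a $d$-adic solenoid, and solenoids do not embed in the plane (Bing).
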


We shall denote by $h:\mathbb{S}^1_f\to\Lambda$ the conjugacy given by the above lemma. A proof can be found in the appendix of \cite{VianaYang}.

\subsubsection{Entropy formula for asymptotic measures}

Applying Theorem F from \cite{CYZ} to the map $g$ above, we obtain

\begin{lemma}
    \label{lem:cyz}
There exists a full Lebesgue measure $\tilde{\Gamma}\subset U$ such that for every $y\in\tilde{\Gamma}$ and every asymptotic measure $\nu\in\cA_g(y)$ it holds
\[
h_{\nu}(g)\geq\int\log\|Dg|_{F}\|d\nu
\]
\end{lemma}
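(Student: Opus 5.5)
The plan is to derive Lemma~\ref{lem:cyz} directly from Theorem F of \cite{CYZ}, after checking that $g$ and the attractor $\Lambda$ meet its hypotheses. As I recall it, Theorem F says roughly the following. Let $g$ be a $C^2$ (or $C^{1+\alpha}$) diffeomorphism onto its image, and let $\Lambda$ be an attracting set with a dominated splitting $E\oplus F$ on which $E$ is uniformly contracted. Then for Lebesgue almost every point $y$ in the basin, every limit point $\nu$ of the empirical measures $\emp^g_n(y)$ satisfies
\[
h_\nu(g)\ \ge\ \int \log|\det Dg|_{F}|\,d\nu .
\]
Here $\dim F=1$, so $|\det Dg|_F|=\|Dg|_F\|$, and this is exactly the claimed inequality.

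\textbf{Step 1: $g$ is a $C^2$ embedding of $U$ into itself.} Since $f$ is $C^2$ and $i$ is smooth, $g$ is $C^2$. In coordinates,
\[
Dg(x,v)=\begin{pmatrix} f'(x) & 0\\ i'(x) & \lambda\end{pmatrix},
\]
with determinant $\lambda f'(x)\neq 0$ because $f$ is an immersion. So $g$ is a local diffeomorphism.

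For injectivity, suppose $g(x,v)=g(x',v')$. Then $f(x)=f(x')$ and $\lambda v+i(x)=\lambda v'+i(x')$, so $|i(x)-i(x')|=\lambda|v-v'|\le 2\lambda$. If $\lambda$ is small compared with the minimal chord distance between distinct preimages of the same point under $f$, this forces $x=x'$ and then $v=v'$. That minimal distance is positive because $f$ is a covering map of degree $|\deg f|$.

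Taking $\lambda$ small also gives $g(\overline U)\subset U$, so $\Lambda$ is a trapping-region attractor. I would take these facts, together with the dominated splitting $E^s\oplus F$ and $\|Dg|_{E^s}\|<1$, from Lemma~\ref{lem:vianayang} and \cite{VianaYang}.

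\textbf{Step 2: the Lebesgue-generic set is the whole basin.} Every point of $U$ lies in the basin of $\Lambda$, since $U$ is a trapping region. For $y\in U$, the orbit $g^n(y)$ is attracted to $\Lambda$, so every asymptotic measure $\nu\in\cA_g(y)$ is a $g$-invariant measure supported on $\Lambda$. The integrand $\log\|Dg|_F\|$ is therefore only evaluated where the splitting is defined.

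Theorem F provides a Lebesgue full-measure subset $\tilde\Gamma$ of the basin, that is, of $U$, on which the inequality holds for all limit points of the empirical measures. This is exactly the set required in the statement.

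\textbf{Main obstacle: matching hypotheses precisely.} \cite{CYZ} is written for diffeomorphisms of a compact manifold, whereas $g$ is only an embedding of the open annulus $U$ into itself. I would handle this by extending $g$ to a $C^2$ diffeomorphism $G$ of a compact surface, for instance a sphere obtained by capping $\overline U$, so that $G=g$ on a neighbourhood of $g(\overline U)$. Then $\Lambda$ is an attractor of $G$, its basin contains $U$, and the empirical measures of $G$ and $g$ coincide for points of $U$.

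A second possible mismatch is that Theorem F may be stated with the continuous function $\log|\det DG|_F|$ defined only on a neighbourhood of $\Lambda$. I would extend this function continuously. This does not affect the integral, because $\supp\nu\subset\Lambda$. The bound is then the conclusion of Theorem F applied to $G$, read on $U$.
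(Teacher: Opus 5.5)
Your approach is the paper's. The paper's proof of this lemma is just the application of Theorem~F of \cite{CYZ} to $g$, with $\dim F=1$ turning $|\det Dg|_F|$ into $\|Dg|_F\|$, which is exactly what you do.

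Your Step~1 and your compactification contain a concrete error, inherited from the paper's identification $U=\mathbb{S}^1\times(-1,1)$: the construction cannot live in an annulus. Because $p\circ g=f\circ p$ and $p$ is a homotopy equivalence, $g$ acts on $\pi_1(U)\cong\mathbb{Z}$ as multiplication by $\deg f$. An injective $g$, however, sends the core circle to a simple closed curve in the annulus. By the Jordan curve theorem such a curve is either null-homotopic or homotopic to $\pm$ the core. So for $|\deg f|\ge 2$ no such $g$ is an embedding of an annulus, and capping to a sphere is not available.

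The fix is already implicit in what you wrote. The expression $\lambda v+i(x)$ only makes sense for $v\in\mathbb{R}^2$, and your computation $|i(x)-i(x')|=\lambda|v-v'|$ is the correct injectivity argument in the solid torus, with bound $2\lambda R$ instead of $2\lambda$. So work on $U=\mathbb{S}^1\times\{v\in\mathbb{R}^2:|v|<R\}$ with the following choices:
\begin{itemize}
\item $R>1/(1-\lambda)$. With $R=1$ the region is not trapped for any $\lambda$, since $|i(x)|=1$ forces $|\lambda v+i(x)|>1$ for suitable $v$.
\item $\lambda<\min|Df|$, which gives the domination.
\end{itemize}
Then $E^s=\{0\}\times\mathbb{R}^2$ is two-dimensional, $\det Dg=\lambda^2 Df$ rather than $\lambda Df$, and any extension to a closed manifold must be three-dimensional.

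Since $F$ is still one-dimensional and $E^s$ is uniformly contracted, Theorem~F applies as you intend. The rest of your argument goes through unchanged: all of $U$ lies in the basin, asymptotic measures are supported on $\Lambda$, and $\log\|Dg|_F\|$ can be extended continuously off $\Lambda$.
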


\subsection{Proof of the entropy formula}
Given $x\in\T$ consider $\tg_x\eqdef\{v\in(-1,1);(x,v)\in\tg\}$, where $\tg$ is the full measure set given by Lemma~\ref{lem:cyz}. By Fubini, the set 
\[
\Gamma\eqdef\{x\in\T;\Leb_{(-1,1)}(\tg_x)=1\}
\]
has full measure in $\T$. 

Take $x\in\Gamma$ and $\mu\in\cA_f(x)$. Let $n_j\to\infty$ be such that $\emp_{n_j}^f(x)\to\mu$. Take $v\in\tg_x$ such that $y=(x,v)\in\tg$ and consider the sequence of empirical measures $e^g_{n_j}(y)$. Up to further extraction of a subsequence, we may assume that 
\[
\emp^g_{n_j}(y)\to\nu\in\cA_g(y).
\]
Observe that, since $\Lambda$ is an attractor, we must have $\nu$ supported in $\Lambda$. Thus, by Lemma~\ref{lem:cyz} the entropy formula holds for $\nu$. Let $p:U\to\T$ be the projection on the first coordinate. Notice, by definition, that $p\circ g=f\circ p$. Thus, for every $n>0$ it holds
\[
p_{\star}e^g_n(y)=e^f_n(x).
\]
Therefore, $p_{\star}\nu=\mu$. Observe that the diagram below is commutative.

\[
\begin{tikzcd}[column sep=4em, row sep=4em]
\Tf
  \arrow[r, "h"]
  \arrow[d, "\pi"']
&
\Lambda
  \arrow[d, "p"]
\\
\T
  \arrow[r, "I_d"']
&
\T
\end{tikzcd}
\]
This proves that $h_{\star}\hat{\mu}=\nu$, where $\hat{\mu}$ is the unique element in $\cP_{\hf}(\Tf)$ satisfying $\pi_{\star}\hat{\mu}=\mu$. 

Since, for $q=(z,v)\in U$ $Dp(g(q))Dg(q)=Df(z)Dp(q)$. Restricting both sides to the one-dimensional subspace $F(q)$ and taking determinants on both sides we get
\[
a(g(q))\|Dg(q)|_F\|=|Df(z)|a(q),
\]
where $a(q)=\det Dp(q)|_{F}$. Since $Dp(q): F(q)\to T_z\T$ is an isomorphism for every $q$, $a(q)\neq 0$ and thus
\[
\|Dg(q)|_F\|=|Df(z)|\frac{a(q)}{a(g(q))}
\]
Taking logarithms on both sides, integrating against $\nu$, using its $g$-invariance and performing the change of variables $z=p(q)$ we deduce immediately 
\[
\int\log\|Dg(q)|_F\|d\nu(q)=\int\log|Df(p(q))|d\nu(q)=\int\log|Df(z)|d\mu(z)
\]
Therefore, $h_{\mu}(f)=h_{\hat{\mu}}(\hf)=h_\nu(g)\geq\int\log|Df|d\mu$, as announced. Ruelle inequality gives the other direction. \qed

\subsection{Consequences on physicality}

\begin{lemma}[Ergodic decomposition]
\label{lem:decomposicao}
Let $\mu$ be an invariant probability measure satisfying
\[
h_\mu(f)=\int\log|Df|\,d\mu.
\]
Then, for $\hat\mu$-almost every ergodic component $\mu_\omega$ of the ergodic decomposition of $\mu$,
\[
h_{\mu_\omega}(f)=\int\log|Df|\,d\mu_\omega.
\]  
\end{lemma}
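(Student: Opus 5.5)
The proof combines two facts about the ergodic decomposition $\mu=\int\mu_\omega\,d\hat\mu(\omega)$: both sides of the entropy formula are affine in the measure, and Ruelle's inequality holds for each ergodic component. This forces equality componentwise.

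First, I will use that the metric entropy map $\nu\mapsto h_\nu(f)$ is affine over the ergodic decomposition (Jacobs' theorem):
\[
h_\mu(f)=\int h_{\mu_\omega}(f)\,d\hat\mu(\omega).
\]
This holds for any measurable map of a compact metric space, with values possibly $+\infty$. Here the values are finite, since $f$ is $C^2$ and $h_{\mu_\omega}(f)\le \log\sup|Df|<\infty$. Likewise, since $\log|Df|$ is a continuous (hence bounded) function — $f$ is an immersion, so $|Df|$ is bounded away from zero — the definition of the ergodic decomposition gives
\[
\int\log|Df|\,d\mu=\int\Big(\int\log|Df|\,d\mu_\omega\Big)d\hat\mu(\omega).
\]

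Second, I will apply Ruelle's inequality to each ergodic component. For an ergodic $\mu_\omega$ on the circle, the only Lyapunov exponent is $\lambda(\mu_\omega)=\int\log|Df|\,d\mu_\omega$, so
\[
h_{\mu_\omega}(f)\le\max\Big\{0,\int\log|Df|\,d\mu_\omega\Big\}.
\]
I must be careful here, because Ruelle gives the bound by the positive part of the exponent. Define the defect $\Delta(\omega):=\max\{0,\lambda(\mu_\omega)\}-h_{\mu_\omega}(f)\ge0$. Then
\[
0=h_\mu(f)-\int\log|Df|\,d\mu=\int\big(h_{\mu_\omega}-\lambda(\mu_\omega)\big)d\hat\mu
=\int\big(\lambda^+(\mu_\omega)-\lambda(\mu_\omega)\big)d\hat\mu-\int\Delta\,d\hat\mu .
\]

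This step is the main obstacle. The term $\lambda^+-\lambda=\lambda^-\ge0$ could in principle compensate a positive defect. To rule this out, I will show directly that $h_{\mu_\omega}(f)\le\lambda(\mu_\omega)$ whenever $\lambda(\mu_\omega)<0$, and in fact that such components carry no entropy and must be excluded. The cleanest route is the following. A negative exponent for an ergodic measure of a $C^2$ (indeed $C^{1+\alpha}$) one-dimensional map forces the measure to be supported on an attracting periodic orbit (Pesin theory/contraction, as in Mañé's results). Then $h_{\mu_\omega}=0>\lambda(\mu_\omega)$, so $h_{\mu_\omega}-\lambda(\mu_\omega)>0$ on that set.

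So I will instead use the inequality in the form $h_{\mu_\omega}-\lambda(\mu_\omega)\ge0$ in general: when $\lambda\ge0$ this is Ruelle's inequality with the sign reversed, so it is not automatic. Hence the true argument must come from the reverse inequality on components. Combining this with Ruelle on the components where $\lambda\ge0$, the integrand $h_{\mu_\omega}-\lambda(\mu_\omega)$ is $\le0$ there. The contribution where $\lambda<0$ is $\ge0$, and it vanishes because in the intended setting (a map conjugate to $x\mapsto dx$) there are no attracting periodic orbits, so $\lambda(\mu_\omega)\ge0$ for every ergodic $\mu_\omega$. With this, the integrand is nonpositive with zero integral, hence it is zero $\hat\mu$-almost everywhere, which is the claim. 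If I want the lemma without that setting, I will first invoke the standing hypothesis of Theorem~\ref{thm:entropyfor}: conjugacy to an expanding map excludes attracting periodic orbits, and hence, via the one-dimensional Pesin argument above, excludes negative exponents.
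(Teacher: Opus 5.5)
Your final argument is correct and follows the paper's route: affinity of $\nu\mapsto h_\nu(f)$ and of $\nu\mapsto\int\log|Df|\,d\nu$ over the ergodic decomposition, Ruelle on each component, and a one-signed integrand with zero integral. It adds a step the paper's proof omits, and that step is necessary. The paper asserts that Ruelle gives $h_{\mu_\omega}(f)\le\int\log|Df|\,d\mu_\omega$ for every component. As you point out, Ruelle only gives $h_{\mu_\omega}(f)\le\max\{0,\lambda(\mu_\omega)\}$, and the $\lambda^-$ term can absorb a positive defect.

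Without an extra hypothesis the lemma is false. Take the $C^2$ degree-two immersion $x\mapsto 2x-\frac{3}{4\pi}\sin 2\pi x$. It has an attracting fixed point $p=0$ (with $Df(p)=\frac12$) and a repelling fixed point $q$. The measure $t\delta_p+(1-t)\delta_q$ with $t\log|Df(p)|+(1-t)\log|Df(q)|=0$ satisfies the entropy formula ($0=0$), but neither component does. So the hypothesis is required, not optional as your last sentence suggests, and the lemma must be read under the standing hypothesis of Theorem~\ref{thm:entropyfor}.

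Your use of that hypothesis is exactly what makes the paper's one-line argument valid. Conjugacy to $x\mapsto dx$ rules out periodic orbits attracting an open set, and for $C^2$ one-dimensional maps an ergodic measure with negative exponent lives on such an orbit. In the paper's actual application, under (H1)--(H3), Lemma~\ref{lem:expoentepositivo} together with $\lambda(\delta_p)=0$ for $p\in P$ would also give $\lambda\ge0$ for all ergodic measures. Your route, however, works under the weaker hypothesis of Theorem~\ref{thm:entropyfor}.

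Two things to clean up. First, the middle of your write-up contains statements you then abandon, and they are false or circular. You cannot show $h_{\mu_\omega}(f)\le\lambda(\mu_\omega)$ when $\lambda(\mu_\omega)<0$, since entropy is nonnegative. There is also no ``reverse inequality'' on components to invoke. Delete these and pass directly from the $\lambda^-$ observation to the exclusion of negative exponents.

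Second, the one-dimensional Pesin fact deserves a sketch:
\begin{itemize}
\item Take a $\mu_\omega$-typical $x$ in the support, so $\frac1n\log|Df^n(x)|\to\lambda<0$.
\item Bounded distortion holds because $\log|Df|$ is Lipschitz ($f$ is a $C^2$ immersion). This gives an interval $U\ni x$ with $|f^n(U)|\to0$ exponentially.
\item Recurrence yields $n$ with $f^n(U)\subset U$, hence a periodic point attracting $U$.
\item The conjugacy to $x\mapsto dx$ forbids such a point.
\end{itemize}
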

\begin{proof}
Since both the metric entropy and the Lyapunov integral are affine with respect to the ergodic decomposition, we have
\[
h_\mu(f)
=
\int h_{\mu_\omega}(f)\,d\eta(\omega)
\]
and
\[
\int\log|Df|\,d\mu
=
\int
\left(
\int\log|Df|\,d\mu_\omega
\right)
d\eta(\omega).
\]
Moreover, Ruelle's inequality yields
\[
h_{\mu_\omega}(f)
\le
\int\log|Df|\,d\mu_\omega
\]
for every ergodic component. Since equality holds after integration, the inequality must be an equality for $\hat\mu$-almost every component.
\end{proof}

Measures satisfying the entropy formula are relevant to study statistical behavior of orbits due to their physicality, as shown by Theorem VII.1.1 of \cite{quianetc}.

\begin{lemma}
    \label{lem:quian}
Let $\mu\in\cP^{\operatorname{erg}}_f(\T)$ be an ergodic invariant measure with 
\[
h_\mu(f)=\int\log|Df|d\mu>0.
\]
Then, $\mu$ is an absolutely continuous measure. In particular, $\mu$ is a physical measure. 
\end{lemma}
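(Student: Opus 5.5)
The plan is to deduce the statement from Ledrappier's characterization of measures satisfying the Pesin entropy formula for non-invertible one-dimensional maps, and then to upgrade absolute continuity to physicality by the ergodic theorem. The cited Theorem VII.1.1 of \cite{quianetc} is this characterization. The route goes through the natural extension $(\Tf,\hf,\hat\mu)$. Since $\mu$ is ergodic, $\hat\mu$ is ergodic, $h_{\hat\mu}(\hf)=h_\mu(f)$, and the Lyapunov exponent is $\lambda(\mu)=\int\log|Df|\,d\mu>0$.

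\textbf{Step 1 (unstable manifolds).} Because the exponent is positive and $f$ is $C^2$, Pesin theory for endomorphisms gives, for $\hat\mu$-a.e.\ orbit $\hat x=(x_j)$, a local unstable manifold. Concretely, there is a neighborhood $W^u_{loc}(\hat x)$ of $x_0$ in $\T$ on which the inverse branches along the past $x_{-1},x_{-2},\dots$ are well defined and contract exponentially. In dimension one these are simply small arcs around $x_0$. Bounded distortion of $Df$ along backward branches follows from the $C^2$ hypothesis together with the exponential contraction.

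\textbf{Step 2 (Ledrappier's argument).} I would construct a measurable partition $\xi$ of $\Tf$ that is subordinate to these unstable arcs and increasing, meaning $\hf^{-1}\xi\succ\xi$. The key identity is $h_{\hat\mu}(\hf)=H(\hf^{-1}\xi\mid\xi)$. I would compare the conditional measures $\hat\mu^\xi_{\hat x}$, pushed to $\T$, with the normalized Lebesgue measure $\rho$ on the arcs. The density of $\rho$ is given by the products $\prod_{j\ge1}|Df(x_{-j})|/|Df(y_{-j})|$, which converge by the distortion bound from Step 1. The entropy equality $h=\int\log|Df|\,d\mu$, together with Jensen's inequality (strict concavity of $\log$), forces $\hat\mu^\xi_{\hat x}=\rho$ almost everywhere. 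Integrating over atoms and projecting by $\pi$ then gives $\mu\ll\Leb$.

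\textbf{Step 3 (physicality).} Once $\mu\ll\Leb$, Birkhoff's theorem applied to a countable dense set of continuous functions shows that $\emp_n(x)\wto\mu$ for $\mu$-a.e.\ $x$. Hence the basin of $\mu$ has positive $\mu$-measure, so it has positive Lebesgue measure, and $\mu$ is physical.

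The main obstacle is Step 2, specifically building the increasing subordinate partition in the non-invertible setting and verifying the Jensen equality case. Rather than redoing this, I would quote Theorem VII.1.1 of \cite{quianetc} directly, checking only its hypotheses: a $C^2$ endomorphism with no critical points, so $Df$ is bounded away from $0$ and $\log|Df|$ is integrable, an ergodic measure, and a positive exponent. Step 3 then follows by the elementary argument above.
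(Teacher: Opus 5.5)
Your proposal is correct and takes the paper's approach: the paper gives no argument beyond invoking Theorem VII.1.1 of \cite{quianetc}, the Ledrappier-type characterization through the natural extension, which is exactly what you end up quoting. Your Step 3, which deduces physicality from $\mu\ll\Leb$ and ergodicity via Birkhoff's theorem, correctly supplies the ``in particular'' that the paper leaves implicit.
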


\section{Ma\~n\'e's consequences}	
Let $f\colon \Sone\to \Sone$ be a $C^2$ endomorphism without critical points. Equivalently, $f$ is a $C^2$ immersion of the circle. The following comes from Ma\~n\'e's paper \cite{mane}.

\subsection{Julia set, plateaux, and Ma\~n\'e's structural consequences}

We first record the bounded-period consequence of Ma\~n\'e's Theorem~C (Corollary~I just after Theorem~C).

\begin{corollary}[Mañé, Corollary~I after Theorem~C]
\label{cor:mane-bounded-periods}
Let $f$ be a $C^2$ endomorphism of the circle or interval, and let
$\Lambda$ be a compact set containing no critical points.
Then the periods of the sinks or non-hyperbolic periodic points whose
orbits are contained in $\Lambda$ are bounded.
\end{corollary}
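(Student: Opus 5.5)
The plan is to deduce the statement from Ma\~n\'e's Theorem~C, which underlies the whole section, by arguing by contradiction. Suppose there are periodic points $p_k$ with orbits $\mathcal O(p_k)\subset\Lambda$, each a sink or non-hyperbolic, and with periods $n_k\to\infty$. Since $\Lambda$ is compact and contains no critical points, we have $\inf_{\Lambda}|Df|>0$. By continuity we can fix a compact neighborhood $V$ of $\Lambda$ with $\inf_V|Df|\ge c>0$, still free of critical points. The goal is to produce, for each large $k$, an interval $T_k\ni p_k$ with the following three properties:
\begin{itemize}
\item $f^{n_k}(T_k)\subset T_k$;
\item the intervals $f^j(T_k)$, $0\le j<n_k$, are pairwise disjoint;
\item each $f^j(T_k)$ is contained in $V$.
\end{itemize}
We take $T_k$ to be the closure of a component, adjacent to $p_k$, of the (one-sided) immediate basin of $p_k$ under $f^{n_k}$. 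Since $p_k$ is a sink or non-hyperbolic, this basin is a non-degenerate one-sided interval, possibly after shrinking to a maximal attracting half-neighborhood. If it is too large to stay inside $V$, we cut it at the first place where an iterate leaves $V$.

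The second step is a Denjoy--Schwartz bounded distortion estimate. Because the $f^j(T_k)$ are disjoint, $\sum_{j<n_k}|f^j(T_k)|\le 1$. Since $f$ is $C^2$ and $|Df|\ge c$ on $V$, $\log|Df|$ is Lipschitz on $V$. This gives, for $x,y\in T_k$,
\[
\Bigl|\log\frac{|Df^{n_k}(x)|}{|Df^{n_k}(y)|}\Bigr|\le \operatorname{Lip}(\log|Df|\,|_V)\sum_{j<n_k}|f^j(T_k)|\le K,
\]
with $K$ independent of $k$. The same bound holds on a slightly larger interval $\widehat T_k\supset T_k$, obtained by adding to $T_k$ a fixed proportion $\delta$ of its length on each side. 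This extension works as long as the orbit of $\widehat T_k$ stays in $V$ and has disjoint (or boundedly overlapping) iterates, which is the Schwartz ``extension'' trick.

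The third step extracts the contradiction. Since $|f^{j}(T_k)|\to0$ for some $j$ along the orbit, and in fact $\min_j|f^j(T_k)|\le 1/n_k$, bounded distortion together with the non-hyperbolicity $|Df^{n_k}(p_k)|\ge$ (roughly) $1$ would force $f^{n_k}(\widehat T_k)$ to cover a definite enlargement of $T_k$. This contradicts the maximality of $T_k$ as an attracting (basin) interval: the endpoint of $T_k$ must be a repelling periodic point or a preimage of $p_k$, which is incompatible with uniformly bounded distortion on $\widehat T_k$.

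The alternative outcome is that the intervals $T_k$ accumulate, in the Hausdorff sense, on an interval $T$ none of whose iterates shrink to zero. Such a $T$ would be a homterval in $V$. By Ma\~n\'e's non-existence of wandering intervals for $C^2$ maps away from critical points, it must eventually be mapped into a periodic interval or the basin of a periodic attractor whose period is a limit of $n_k\to\infty$, which is absurd. Essentially, this is Theorem~C of \cite{mane}: periodic orbits of large period inside a critical-point-free compact set are hyperbolic repelling. If Theorem~C is available in that form, the corollary is immediate.

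The main obstacle is the second step, namely guaranteeing that the enlarged intervals $\widehat T_k$ keep controlled total length and stay in $V$ along the whole period. Disjointness is only known for $T_k$ itself, not for $\widehat T_k$. My first approach would be Ma\~n\'e's inductive argument: choose $\delta$ small, and prove by induction on $j\le n_k$ that $|f^j(\widehat T_k)|\le (1+2\delta)e^{K}|f^j(T_k)|$. The induction uses the distortion bound accumulated up to time $j$, which is controlled by $\sum_{i<j}|f^i(T_k)|\le1$, and it keeps $\widehat T_k$'s iterates inside $V$ once $n_k$ is large.
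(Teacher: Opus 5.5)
\textbf{The paper's own treatment and your fallback.} The paper gives no proof of this statement. It is quoted from Ma\~n\'e (Corollary~I after Theorem~C), so your closing fallback of invoking Ma\~n\'e's result is exactly what the paper does.

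\textbf{First gap: $T_k$ need not exist.} The self-contained argument you sketch before that fallback does not work. A non-hyperbolic periodic point can be topologically repelling on both sides. If near $p_k$ one has $f^{n_k}(x)=p_k+(x-p_k)+(x-p_k)^3$, or $Df^{n_k}(p_k)=-1$ with $f^{2n_k}$ of this form, there is no basin and no attracting half-neighbourhood. So ``shrinking to a maximal attracting half-neighbourhood'' produces nothing. Forward-invariant basin arguments cannot reach this weakly repelling case.

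\textbf{Second gap: step three gives no contradiction, even for sinks.} When $T_k$ is an immediate basin, $f^{n_k}(\overline{T_k})=\overline{T_k}$ with the endpoints fixed or swapped. Bounded distortion on $\widehat T_k$ then only says that $f^{n_k}(\widehat T_k)$ overshoots $T_k$ by a definite proportion, which is exactly what repelling endpoints do. After affine rescaling, nothing prevents $f^{n_k}|_{\widehat T_k}$ from looking like $g(x)=x+c\,x(1-x)(\tfrac12-x)$ with $c>0$ small. This $g$ is a diffeomorphism near $[0,1]$ with arbitrarily small distortion. It has an attracting fixed point at $\tfrac12$, immediate basin $(0,1)$, and repelling endpoints. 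So the claimed incompatibility between a repelling endpoint and uniformly bounded distortion is false. Also, $n_k\to\infty$ is never genuinely used in the branch $|T_k|\to0$. Your homterval branch, via non-existence of wandering intervals, is legitimate, but it only covers the easy case where $|T_k|$ stays bounded below.

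\textbf{The missing idea.} Work backwards from an interval of \emph{definite} size, rather than forwards from a basin of uncontrolled size.
\begin{enumerate}
\item Fix an interval $J$ meeting the orbits, for instance one adapted to the compact invariant set they generate.
\item Pull $J$ back along the orbits by inverse branches.
\item Use Denjoy--Schwartz summability and bounded distortion of these inverse branches (estimates of the type (2)--(3) in Lemma~\ref{lem:maneI3}) to show that long return branches $\psi$ are uniform contractions.
\item Conclude, as in Lemma~\ref{lem:estimapoincare}, that $|Df^{n}(p)|>1$ for every periodic orbit of large period in $\Lambda$.
\end{enumerate}
This handles sinks and non-hyperbolic points simultaneously, and it is the inverse-branch mechanism underlying Ma\~n\'e's result.
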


Now assume that $f\colon \Sone\to \Sone$ is a circle immersion of degree $|d|\neq 1$.
Ma\~n\'e recalls that there exists a continuous map
\[
h\colon \Sone\to \Sone
\]
of degree $1$ such that
\[
h\circ f = f_d\circ h,
\qquad
f_d(z)=z^d.
\]
If $f$ is an immersion, then $h$ is monotone: for every $z\in \Sone$, the fiber
\[
h^{-1}(\{z\})
\]
is either a single point or a closed interval $[a,b]$ with $a\neq b$.
In the latter case, the open interval $(a,b)$ is called a \emph{plateau} of $f$.
Following Ma\~n\'e, we define the Julia set by
\[
J(f):=\Sone\setminus \bigcup\{\text{plateaux of }f\}.
\]

Using the semiconjugacy $h$, one obtains the following structural properties:
\begin{enumerate}[label=(\roman*), leftmargin=2.3em]
	\item two plateaux are either disjoint or coincide;
	\item $f$ maps plateaux diffeomorphically onto plateaux;
	\item every plateau is periodic, eventually periodic, or wandering;
	\item if $f$ is $C^2$, every plateau is periodic or eventually periodic.
\end{enumerate}
Moreover, for a $C^2$ immersion, the set of periodic plateaux is finite (this is Corollary~III of Theorem~C).

Now, there is the following measure-theoretic dichotomy is given by Ma\~n\'e \cite[Corollary~I of Theorem~D]{mane}.

\begin{corollary}\label{cor:mane-D}
	For a $C^2$ immersion of the circle,
	\[
	J(f)=\Sone
	\qquad\text{or}\qquad
	\Leb(J(f))=0.
	\]
\end{corollary}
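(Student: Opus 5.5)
The plan is to derive the dichotomy from Mañé's Theorem~D rather than prove it from scratch. The idea is to show that if the plateaux carry any Lebesgue measure, then they carry all of it. The key quantity is the complement $J(f)$: we show it is either the whole circle or Lebesgue-null.

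\textbf{Reduction.} Suppose $J(f)\neq\Sone$, so at least one plateau exists. By structural properties (ii)--(iv), every plateau is periodic or eventually periodic. By Corollary~III of Theorem~C there are only finitely many periodic plateaux. Let $P_1,\dots,P_k$ denote the periodic plateaux. Then
\[
\Sone\setminus J(f)=\bigcup_{n\ge0}f^{-n}\Bigl(\bigcup_{i=1}^k P_i\Bigr),
\]
because each plateau is mapped onto another plateau and eventually lands on a periodic one. Consequently $J(f)$ is exactly the set of points whose forward orbit never enters the open set $B:=\bigcup_i P_i$. This set is compact and forward invariant, and it contains no critical points since $f$ is an immersion.

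\textbf{Main step.} Now apply Mañé's Theorem~D, which says that a compact invariant set $\Lambda$ without critical points is either hyperbolic-expanding or contains sinks, non-hyperbolic periodic points, or points attracted to them.
\begin{itemize}
\item A sink in $J(f)$ would have an open basin. That basin would map into the semiconjugacy fibre of a periodic point of $f_d$ and therefore produce a plateau. This contradicts the fact that $J(f)$ has empty interior whenever plateaux exist; in fact plateau preimages are dense, because $f_d$ is minimal on backward orbits.
\item The remaining case, Mañé's Corollary~I of Theorem~D, gives $\Leb(\{x:\omega(x)\subset\Lambda\})=0$ for $\Lambda$ without sinks. Apply this with $\Lambda=J(f)$. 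Every $x\in J(f)$ has $\omega(x)\subset J(f)$, hence $\Leb(J(f))=0$.
\end{itemize}

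\textbf{Expected obstacle.} The main difficulty is handling the non-hyperbolic (indifferent) periodic points inside $J(f)$. These may have one-sided basins of positive measure. One must check that such one-sided basins are themselves plateaux, i.e.\ intervals on which $h$ is constant. This should follow because the orbits in such a basin converge to a periodic orbit, so $h$ maps the basin into a set on which the expanding map $f_d$ does not expand distances. That forces $h$ to be constant there, so the basin lies outside $J(f)$.

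As stated, the result is precisely \cite[Corollary~I of Theorem~D]{mane}, so the proof in the paper is a citation, and the argument above is the reasoning behind it.
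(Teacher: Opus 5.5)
As a citation, your proposal agrees with the paper, which simply quotes Ma\~n\'e. Your reduction is also sound: $J(f)$ is the set of points that never enter the finitely many periodic plateaux, and sinks are excluded from $J(f)$ because $h$ must be constant on their immediate basins. The derivation, however, has a genuine gap at the main step. The statement you invoke is that $\Leb(\{x:\omega(x)\subset\Lambda\})=0$ for every compact invariant $\Lambda$ without critical points and without sinks. This is false. For a $C^2$ expanding map, take $\Lambda=\Sone$. For a fixed point $p$ that is parabolic and attracting from one side (locally $x\mapsto x-(x-p)^2$), take $\Lambda=\{p\}$. Your second bullet never uses $J(f)\neq\Sone$. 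Taken at face value, it would also give $\Leb(\Sone)=0$ when $J(f)=\Sone$, where there are no sinks either. Attributing this statement to Corollary~I of Theorem~D is also circular, since that corollary is exactly the dichotomy being proved.

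A correct replacement must assume $\Lambda\neq\Sone$ and must tolerate non-hyperbolic periodic points in $\Lambda$. Your ``expected obstacle'' paragraph does not settle the latter. Showing that one-sided basins of indifferent points lie in plateaux is fine, but $J(f)$ can contain indifferent periodic points that repel on both sides. For example, take a degree-two immersion whose only fixed points are $q<a<r$: $a$ hyperbolic attracting, $r$ hyperbolic repelling, and $q$ locally $x\mapsto x+(x-q)^3$. Then $(q,r)$ is a plateau and $q\in J(f)$. Such a point has no basin to absorb into a plateau, yet it destroys uniform expansion on $J(f)$. So neither a ``hyperbolic-expanding'' branch nor a density-point/bounded-distortion argument gives $\Leb(J(f))=0$.

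What is missing is a measure statement of the following form: if $\Lambda\neq\Sone$ is compact, invariant and free of critical points, then Lebesgue-a.e.\ $x$ with $\omega(x)\subset\Lambda$ is attracted to a sink or to a non-hyperbolic periodic orbit contained in $\Lambda$. That is the non-trivial content behind Ma\~n\'e's corollary, and it is where the analysis near indifferent points is carried out. Granting it, your observations finish the proof. First, $J(f)$ contains no sinks. Second, your fibre argument shows that, near a non-hyperbolic periodic point of $J(f)$, each one-sided neighbourhood is either contained in a plateau or repelling. Hence the points of $J(f)$ attracted to such an orbit are just its preimages, a countable set.
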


And in the case $J(f)=\Sone$ one has the $0$--$1$ law \cite[Theorem~E]{mane}.

\begin{theorem}\label{thm:mane-E}
	If $f$ is a $C^2$ immersion of the circle and $J(f)=\Sone$, then every invariant Borel set has
	Lebesgue measure $0$ or $1$.
\end{theorem}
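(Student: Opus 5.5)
The plan is a Lebesgue density argument along the Markov cylinders pulled back from the model map $f_d$, with bounded distortion as the key input. Since $J(f)=\Sone$, there are no plateaux, so the monotone semiconjugacy $h$ has single-point fibres and is a homeomorphism conjugating $f$ to $f_d$, with $|d|\ge2$.

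\textbf{Step 1 (cylinders).} Fix the Markov partition of $f_d$ into $|d|$ arcs and pull it back by $h$ to get a partition $\mathcal P_1$ of $\Sone$ into arcs. Each element of $\mathcal P_1$ is mapped by $f$ diffeomorphically onto $\Sone$ minus a point. Let $\mathcal P_n$ be the corresponding $n$-cylinders, i.e.\ the $h$-preimages of the $n$-cylinders of $f_d$. Each $P\in\mathcal P_n$ is an arc mapped by $f^n$ diffeomorphically onto the complement of a point. Because $h$ is a homeomorphism and the cylinders of $f_d$ have length $|d|^{-n}$, uniform continuity of $h^{-1}$ gives $\max_{P\in\mathcal P_n}|P|\to0$. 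Hence for Lebesgue-a.e.\ $x$ the cylinders $P_n(x)\ni x$ shrink to $x$. One must take a little care at endpoints, for instance by using slightly enlarged cylinders or by discarding the countable set of cylinder endpoints.

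\textbf{Step 2 (bounded distortion).} This is the key estimate and the step I expect to be the main obstacle. Since $f$ is $C^2$ without critical points, $\log|Df|$ is Lipschitz, so
\[
\log\frac{|Df^n(x)|}{|Df^n(y)|}\le C\sum_{k=0}^{n-1}|f^k(P)|\qquad\text{for } x,y\in P\in\mathcal P_n .
\]
It therefore suffices to bound $\sum_{k<n}|f^k(P)|$ uniformly, noting that $f^k(P)\in\mathcal P_{n-k}$. I would use Corollary~\ref{cor:mane-bounded-periods} together with Mañé's Theorem~A: away from a neighbourhood of the finitely many non-hyperbolic periodic orbits (there are no sinks, since $f$ is conjugate to $f_d$), some iterate of $f$ is uniformly expanding. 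So along the stretches of the orbit of $P$ that stay away from these orbits, the lengths $|f^k(P)|$ decay geometrically backwards from time $n$, and their sum is bounded. Near a non-hyperbolic (necessarily one-sided repelling, parabolic-type) periodic point, the pieces $f^k(P)$ lie in a fundamental-domain chain converging monotonically to the point. There I would use the $C^2$ local form $f(x)=x+a x^{1+\alpha}+\dots$, or more robustly the fact that these pieces are disjoint subarcs of a fixed neighbourhood, to bound their total length by the length of that neighbourhood. Combining the two regimes, with a bounded number of transitions handled via the finite Markov structure, gives a distortion constant $K$ independent of $n$ and $P$. If this direct route fails, the fallback is to quote Mañé's distortion lemma for pullbacks of small intervals avoiding plateaux.

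\textbf{Step 3 (density argument).} Let $A$ satisfy $f^{-1}(A)=A$ and $\Leb(A)>0$, so that $f(A)\subset A$ and $f(A^c)\subset A^c$. Take a density point $x$ of $A$ lying in the full-measure set from Step 1. Then
\[
\frac{|P_n(x)\setminus A|}{|P_n(x)|}\to0 .
\]
Because $f^n(P_n(x)\setminus A)\supseteq f^n(P_n(x))\setminus A$, which is all of $\Sone\setminus A$ up to a point, bounded distortion with constant $K$ gives
\[
\Leb(\Sone\setminus A)\le K\,\frac{|P_n(x)\setminus A|}{|P_n(x)|}\to0 .
\]
Hence $\Leb(A)=1$. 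Applying the same argument to $A^c$ shows that $\Leb(A)$ can only be $0$ or $1$.
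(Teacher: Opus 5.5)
The paper gives no proof of this statement; it quotes Ma\~n\'e's Theorem~E. Your argument has a genuine gap: the uniform distortion claim of Step~2 is false whenever $f$ has a non-hyperbolic periodic orbit. That is exactly the non-classical case, since without such orbits Ma\~n\'e's hyperbolicity theorem makes $f$ uniformly expanding and your argument is the textbook one.

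To see the failure, let $p$ be a non-hyperbolic periodic point. Then $|Df^n(p)|$ stays bounded in $n$. For $P_n:=P_n(p)$ you have $\int_{P_n}|Df^n|=\Leb(f^n(P_n))=1$, so $\sup_{P_n}|Df^n|\ge 1/|P_n|$. Hence the distortion of $f^n$ on $P_n$ is at least of order $1/|P_n|\to\infty$. For instance, the degree-two map with lift $x\mapsto 2x-\frac{1}{2\pi}\sin 2\pi x$ satisfies all the hypotheses, has a neutral fixed point at $0$, and $|P_n|$ is of order $n^{-1/2}$.

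Your sketch breaks at the assertion that the pieces near a neutral orbit are disjoint. Here $f^k(P_n)$ is the $(n-k)$-cylinder containing $f^k(p)$, so the pieces are nested around the orbit of $p$ and $\sum_k|f^k(P_n)|$ is unbounded in $n$. Pieces near a neutral orbit lie in disjoint fundamental domains only when the final image of the branch stays a definite distance from the neutral orbits. Step~3, however, uses distortion of $f^n$ on a full cylinder whose image is all of $\Sone$, so even your fallback does not plug into Step~3 as written. In addition, a typical orbit passes near the neutral orbits a number of times that grows with $n$, so there is no ``bounded number of transitions''. One must show that the contributions of successive passages decay geometrically backwards in time; that is the real content of Ma\~n\'e's distortion estimates (compare (2)--(3) in Lemma~\ref{lem:maneI3}).

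The density strategy survives if you reorganize it so that distortion is only needed for branches whose final image avoids the neutral orbits.
\begin{enumerate}
\item Choose a neighbourhood $V$ of the neutral orbits small enough that, through the conjugacy with $f_d$, the only points whose forward orbits stay in $V$ are the neutral orbits themselves. Then Lebesgue-a.e.\ $x$ has infinitely many times $n_j$ with $f^{n_j}(x)\notin V$, because the exceptional set is countable.
\item Pull back the fixed-size arc $B_j:=B(f^{n_j}(x),\delta)$ along the orbit of $x$ to an arc $Q_j\ni x$. These arcs shrink to $x$ by uniform continuity of $h^{\pm1}$. In dimension one, any arcs containing a density point $x$ of $A$ and shrinking to it satisfy $\Leb(Q_j\setminus A)/|Q_j|\to0$.
\item Grant a uniform distortion bound for $f^{n_j}|_{Q_j}$, which is the Ma\~n\'e-type estimate above, with $\delta$ small relative to $V$. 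Forward invariance gives $B_j\setminus A\subset f^{n_j}(Q_j\setminus A)$, hence $\Leb(B_j\setminus A)/|B_j|\to0$.
\item For a limit point $z$ of $f^{n_j}(x)$, the arc $B:=B(z,\delta/2)$ satisfies $\Leb(B\setminus A)=0$. Exactness of $f_d$ gives $m$ with $f^m(B)=\Sone$, so $\Sone\setminus A\subset f^m(B\setminus A)$. This set is null because $f^m$ is Lipschitz.
\end{enumerate}
Thus $\Leb(A)>0$ forces $\Leb(A)=1$. This uses only $f(A)\subset A$, which matches the forward-invariant formulation in Remark~\ref{rem:mane-forward-invariant}.
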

	
\begin{remark}\label{rem:mane-forward-invariant}
	We recall that, following Mañé, ``invariant'' is understood in the forward sense:
	\[
	f(A)\subset A.
	\]
	Thus, Corollary~\ref{cor:mane-bounded-periods}, Theorem~\ref{thm:mane-E}, as well as the other consequences we use
from \cite{mane}, apply to forward invariant sets. In the concrete applications below, the sets we consider are often fully invariant, but this stronger property will not be needed.
\end{remark}

\subsection{Passing to an iterate}

Before treating the two cases arising from Mañé's dichotomy, we record
some simple facts relating iterates of the dynamics and emergence.
They will be used both in the plateaux case and later in the
$J(f)=\Sone$ case.

Let \(N\ge1\), set \(F:=f^N\), and define
\[
T_N:\cP(\Sone)\to\cP(\Sone),
\qquad
T_N(\eta):=
\frac1N\sum_{j=0}^{N-1}(f^j)_*\eta.
\]

\begin{proposition}\label{prop:iterate}
For every \(x\in\Sone\),
\[
\cA_f(x)=T_N(\cA_F(x)).
\]
\end{proposition}

\begin{proof}
For every \(q\ge1\),
\[
\emp_q^F(x)
=
\frac1q\sum_{k=0}^{q-1}\delta_{f^{kN}(x)}.
\]
Therefore,
\[
\begin{aligned}
T_N(\emp_q^F(x))
&=
\frac1N\sum_{j=0}^{N-1}
(f^j)_*
\left(
\frac1q\sum_{k=0}^{q-1}\delta_{f^{kN}(x)}
\right)
\\
&=
\frac1{qN}
\sum_{j=0}^{N-1}\sum_{k=0}^{q-1}
\delta_{f^{j+kN}(x)}
\\
&=
\frac1{qN}\sum_{m=0}^{qN-1}\delta_{f^m(x)}
=
\emp_{qN}^f(x).
\end{aligned}
\]
Thus,
\[
\emp_{qN}^f(x)=T_N(\emp_q^F(x)).
\]

Now write \(m=qN+r\), with \(0\le r<N\). Then
\[
\emp_{qN+r}^f(x)
=
\frac{qN}{qN+r}\,\emp_{qN}^f(x)
+
\frac1{qN+r}
\sum_{j=qN}^{qN+r-1}\delta_{f^j(x)},
\]
and hence
\[
\Wone(\emp_{qN+r}^f(x),\emp_{qN}^f(x))
\le
\frac{2r\,\diam(\Sone)}{qN+r}
\le
\frac{2N\,\diam(\Sone)}{qN+r}.
\]
In particular, this quantity tends to zero as \(q\to\infty\), uniformly
in \(x\) and \(0\le r<N\).

Let \(\eta\in\cA_F(x)\). There exists \(q_k\to\infty\) such that
\[
\emp_{q_k}^F(x)\wto\eta.
\]
Since \(T_N\) is continuous,
\[
\emp_{q_kN}^f(x)
=
T_N(\emp_{q_k}^F(x))
\wto T_N(\eta),
\]
and therefore
\[
T_N(\eta)\in\cA_f(x).
\]
Hence
\[
T_N(\cA_F(x))\subset\cA_f(x).
\]

Conversely, let \(\nu\in\cA_f(x)\). There exists \(m_k\to\infty\) such that
\[
\emp_{m_k}^f(x)\wto\nu.
\]
Write
\[
m_k=q_kN+r_k,
\qquad
0\le r_k<N.
\]
By the estimate above,
\[
\Wone(\emp_{m_k}^f(x),\emp_{q_kN}^f(x))\to0,
\]
and hence
\[
\emp_{q_kN}^f(x)\wto\nu.
\]
Since \(\cP(\Sone)\) is compact, after passing to a subsequence we may
assume that
\[
\emp_{q_k}^F(x)\wto\eta
\]
for some \(\eta\in\cA_F(x)\). Therefore,
\[
\nu
=
\lim_{k\to\infty}\emp_{q_kN}^f(x)
=
\lim_{k\to\infty}T_N(\emp_{q_k}^F(x))
=
T_N(\eta).
\]
Thus
\[
\cA_f(x)\subset T_N(\cA_F(x)),
\]
which proves the equality.
\end{proof}

\begin{corollary}\label{cor:iterate-convergence}
Let
\[
C_f:=\{x\in\Sone:\emp_n^f(x)\text{ converges}\},
\qquad
C_F:=\{x\in\Sone:\emp_n^F(x)\text{ converges}\}.
\]
Then
\[
C_F\subset C_f.
\]
\end{corollary}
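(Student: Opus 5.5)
The plan is to read the inclusion directly off Proposition~\ref{prop:iterate}, using the explicit identity established in its proof rather than only the equality of accumulation sets. Fix \(x\in C_F\), so that \(\emp_q^F(x)\wto\eta\) for some \(\eta\in\cP(\Sone)\). We must show that the full sequence \((\emp_m^f(x))_{m\ge1}\) converges, and we expect its limit to be \(T_N(\eta)\).

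First, the computation in the proof of Proposition~\ref{prop:iterate} gives, for every \(q\ge1\),
\[
\emp_{qN}^f(x)=T_N(\emp_q^F(x)).
\]
Each push-forward \((f^j)_*\) is weak-\(\ast\) continuous, because \(f^j\) is continuous. Hence the finite average \(T_N\) is continuous, and so \(\emp_{qN}^f(x)\wto T_N(\eta)\) as \(q\to\infty\).

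Second, write an arbitrary \(m\) as \(m=qN+r\) with \(0\le r<N\). The same proof gives the bound
\[
\Wone(\emp_{qN+r}^f(x),\emp_{qN}^f(x))\le \frac{2N\diam(\Sone)}{qN+r}.
\]
This tends to zero as \(m\to\infty\), uniformly in \(r\). Since \(\Wone\) metrizes the weak-\(\ast\) topology, the triangle inequality then gives \(\Wone(\emp_m^f(x),T_N(\eta))\to0\). Thus \(x\in C_f\).

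An equivalent shortcut is also available. By Proposition~\ref{prop:iterate}, \(\cA_f(x)=T_N(\cA_F(x))=T_N(\{\eta\})\) is a singleton. In the compact space \(\cP(\Sone)\), a sequence with a unique accumulation point converges, which again gives \(x\in C_f\).

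There is no real obstacle here. The only point needing care is that the whole sequence \((\emp_m^f(x))_m\) converges, not just the subsequence along multiples of \(N\). The uniform remainder estimate, or equivalently the compactness argument, handles exactly this point.
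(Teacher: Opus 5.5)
Your proof is correct, and your ``shortcut'' is exactly the paper's argument: $\cA_F(x)$ is a singleton, so $\cA_f(x)=T_N(\cA_F(x))$ is a singleton by Proposition~\ref{prop:iterate}, and compactness of $\cP(\Sone)$ forces the whole sequence $(\emp_m^f(x))_m$ to converge. Your main route just unpacks the proof of that proposition (the identity $\emp_{qN}^f(x)=T_N(\emp_q^F(x))$ plus the uniform remainder bound), and it has the small bonus of identifying the limit as $T_N(\eta)$.
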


\begin{proof}
If \(x\in C_F\), then \(\cA_F(x)\) is a singleton. By
Proposition~\ref{prop:iterate}, \(\cA_f(x)\) is also a singleton.
Since \(\cP(\Sone)\) is compact, it follows that
\((\emp_n^f(x))_{n\ge1}\) converges. Hence \(x\in C_f\).
\end{proof}

\begin{corollary}\label{cor:iterate-emergence}
Let \(\mu\) be a Borel probability measure on \(\Sone\), and set
\[
L_N:=
\frac1N\sum_{j=0}^{N-1}\Lip(f^j).
\]
Then, for every \(\varepsilon>0\),
\[
\cE_\mu(f)(\varepsilon)
\le
\cE_\mu\left(F,\frac{\varepsilon}{L_N}\right).
\]
In particular, if \(F\) has low emergence with respect to \(\mu\), then
so does \(f\).
\end{corollary}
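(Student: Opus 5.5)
The plan is to compare the two emergence quantities through the map $T_N$. For every $q$ we have $\emp_{qN}^f(x)=T_N(\emp_q^F(x))$, and $\emp_m^f(x)$ is uniformly close to $\emp_{qN}^f(x)$ when $m=qN+r$ with $0\le r<N$. Both identities come from the proof of Proposition~\ref{prop:iterate}. So, given measures that witness $\cE_\mu(F,\varepsilon/L_N)$, we push them forward by $T_N$ and check that the results witness $\cE_\mu(f)(\varepsilon)$.

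The first step is to show that $T_N$ is Lipschitz for $\Wone$ with constant $L_N$. Let $\eta,\eta'\in\cP(\Sone)$ and let $\pi$ be a coupling of them. Then $(f^j\times f^j)_*\pi$ is a coupling of $(f^j)_*\eta$ and $(f^j)_*\eta'$, so
\[
\Wone((f^j)_*\eta,(f^j)_*\eta')\le \int d(f^jx,f^jy)\,d\pi\le \Lip(f^j)\int d\,d\pi .
\]
Optimising over $\pi$ gives the bound $\Lip(f^j)\Wone(\eta,\eta')$ for each $j$. For the average, we use the averaged coupling $\frac1N\sum_j (f^j\times f^j)_*\pi$, or equivalently the convexity of $\Wone$ under mixtures with a common weight. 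This yields $\Wone(T_N\eta,T_N\eta')\le L_N\Wone(\eta,\eta')$.

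Next, set $K:=\cE_\mu(F,\varepsilon/L_N)$ and choose $\eta_1,\dots,\eta_K$ with
\[
\limsup_q\int\min_i\Wone(\emp_q^F(x),\eta_i)\,d\mu\le\varepsilon/L_N .
\]
Put $\nu_i:=T_N(\eta_i)$. Take $m\to\infty$ and write $m=qN+r$. Then
\[
\min_i\Wone(\emp_m^f(x),\nu_i)\le \frac{2N\diam(\Sone)}{m}+L_N\min_i\Wone(\emp_q^F(x),\eta_i).
\]
The first term tends to zero uniformly in $x$. Integrating against $\mu$ and taking $\limsup$ over $m$ (along which $q=\lfloor m/N\rfloor\to\infty$) gives a bound of $L_N\cdot\varepsilon/L_N=\varepsilon$. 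Hence $\cE_\mu(f)(\varepsilon)\le K$.

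For the final assertion, note that $L_N\ge 1$ because $\Lip(f^0)=1$ and each $\Lip(f^j)\ge 1$ for the surjective maps involved. In any case $L_N$ is a fixed constant. Substituting $\delta=\varepsilon/L_N$, we get $\log\log\cE_\mu(f)(\varepsilon)/(-\log\varepsilon)\le \log\log\cE_\mu(F)(\delta)/(-\log\delta-\log L_N)$, and $-\log\delta-\log L_N\sim-\log\delta$ as $\varepsilon\to0$. So if the order of emergence of $F$ is $0$, the same holds for $f$. The only point needing care is the Lipschitz estimate for $T_N$ under $\Wone$; the rest is bookkeeping.
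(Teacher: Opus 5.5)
Your proposal is correct and is essentially the paper's proof. Like the paper, you show that $T_N$ is $L_N$-Lipschitz for $\Wone$, push the witnesses for $\cE_\mu(F,\varepsilon/L_N)$ forward by $T_N$, and compare $\emp_m^f(x)$ with $\emp_{qN}^f(x)=T_N(\emp_q^F(x))$ up to an $O(N/m)$ error. The coupling argument for the pushforward bound and the explicit order-of-emergence computation are details the paper leaves implicit. The side claim $\Lip(f^j)\ge1$ is not needed, since $L_N\ge 1/N>0$ already suffices.
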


\begin{proof}
For every \(\alpha,\beta\in\cP(\Sone)\),
\[
\begin{aligned}
\Wone(T_N(\alpha),T_N(\beta))
&\le
\frac1N\sum_{j=0}^{N-1}
\Wone((f^j)_*\alpha,(f^j)_*\beta)
\\
&\le
L_N\Wone(\alpha,\beta).
\end{aligned}
\]
Thus \(T_N\) is \(L_N\)-Lipschitz.

Fix \(\varepsilon>0\), and let
\[
M:=
\cE_\mu\left(F,\frac{\varepsilon}{L_N}\right).
\]
Choose measures \(\nu_1,\dots,\nu_M\in\cP(\Sone)\) admissible in the
definition of this emergence. Let \(m=qN+r\), with \(0\le r<N\). Using $\emp_{qN}^f(x)=T_N(\emp_q^F(x))$, we obtain
\[
\begin{aligned}
\min_{1\le i\le M}
\Wone(\emp_m^f(x),T_N(\nu_i))
&\le
\Wone(\emp_m^f(x),\emp_{qN}^f(x))
\\
&\quad+
\min_{1\le i\le M}
\Wone(T_N(\emp_q^F(x)),T_N(\nu_i))
\\
&\le
\frac{2N\,\diam(\Sone)}{m}
+
L_N
\min_{1\le i\le M}
\Wone(\emp_q^F(x),\nu_i).
\end{aligned}
\]
Integrating with respect to \(\mu\) and taking the upper limit as
\(m\to\infty\), we obtain
\[
\limsup_{m\to\infty}
\int_{\Sone}
\min_{1\le i\le M}
\Wone(\emp_m^f(x),T_N(\nu_i))
\,d\mu(x)
\le
\varepsilon.
\]
Hence, $\cE_\mu(f)(\varepsilon)
\le
\cE_\mu\left(F,\frac{\varepsilon}{L_N}\right)$. The last assertion follows directly.

\end{proof}

\begin{corollary}\label{cor:iterate-compact-family}
Let \(K\subset\cP(\Sone)\) be compact. If, for some \(x\in\Sone\), $\cA_F(x)\subset K$, then
\[
\cA_f(x)\subset T_N(K).
\]
Moreover, if \(K\) has polynomial \(\Wone\)-covering numbers, then so
does \(T_N(K)\).
\end{corollary}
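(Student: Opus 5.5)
The first assertion follows directly from Proposition~\ref{prop:iterate}. If \(\cA_F(x)\subset K\), then
\[
\cA_f(x)=T_N(\cA_F(x))\subset T_N(K).
\]
Since \(T_N\) is continuous (indeed \(L_N\)-Lipschitz, as shown in the proof of Corollary~\ref{cor:iterate-emergence}) and \(K\) is compact, the set \(T_N(K)\) is also compact. This makes it a legitimate target set for Proposition~\ref{prop:compact-family-bound}.

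For the covering numbers, I will use the Lipschitz estimate
\[
\Wone(T_N(\alpha),T_N(\beta))\le L_N\,\Wone(\alpha,\beta),
\]
which was established in the proof of Corollary~\ref{cor:iterate-emergence}. It rests on \(\Wone((f^j)_*\alpha,(f^j)_*\beta)\le \Lip(f^j)\,\Wone(\alpha,\beta)\), obtained by pushing forward an optimal coupling under \(f^j\times f^j\).

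Given \(\eps>0\), take an \(\eps/L_N\)-net \(\{\eta_1,\dots,\eta_M\}\) of \(K\) with \(M=N_{\Wone}(K,\eps/L_N)\). Then \(\{T_N(\eta_1),\dots,T_N(\eta_M)\}\) is an \(\eps\)-net of \(T_N(K)\), so
\[
N_{\Wone}(T_N(K),\eps)\le N_{\Wone}(K,\eps/L_N).
\]
If \(N_{\Wone}(K,\delta)\le C\delta^{-s}\) for small \(\delta\), this gives \(N_{\Wone}(T_N(K),\eps)\le C L_N^{s}\eps^{-s}\), which is again polynomial. One small point: \(L_N\ge 1\) because the \(j=0\) term is \(\Lip(\mathrm{id})=1\), so \(\eps/L_N\) is still small whenever \(\eps\) is.

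The argument is short, and I see no real obstacle. The only point needing care is to state "polynomial covering numbers" in a form, such as \(\log N(\eps)=O(\log(1/\eps))\), that is manifestly invariant under rescaling \(\eps\mapsto\eps/L_N\). In that form the order of emergence \(\log\log N/(-\log\eps)\to 0\) is preserved.
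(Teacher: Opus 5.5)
Your proof is correct and follows the paper's own argument: the inclusion comes directly from $\cA_f(x)=T_N(\cA_F(x))$ in Proposition~\ref{prop:iterate}, and the covering-number bound comes from the $L_N$-Lipschitz property of $T_N$ established in the proof of Corollary~\ref{cor:iterate-emergence}. One minor slip: the $j=0$ term contributes only $1/N$ to $L_N$, so it does not by itself give $L_N\ge 1$. This is harmless, since any fixed positive $L_N$ preserves polynomial covering numbers.
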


\begin{proof}
The first assertion follows from Proposition~\ref{prop:iterate}.
The second follows from the fact that \(T_N\) is Lipschitz, as shown in
the proof of Corollary~\ref{cor:iterate-emergence}.
\end{proof}
	
\subsection{The plateaux case}
	In the task of proving Theorem \ref{main}, we first show that, in the case $f$ admits plateaux, the emergence is low.
	
	\begin{proposition}\label{prop:plateau}
		Let $f\colon \Sone\to \Sone$ be a $C^2$ endomorphism without critical points.
		Assume $\Leb(J(f))=0$.
		Then $f$ has low emergence with respect to the Lebesgue measure.
	\end{proposition}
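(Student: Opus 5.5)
We plan to show that for Lebesgue-a.e. $x$ the empirical measures $\emp_n(x)$ converge to a measure drawn from a compact family with polynomial covering numbers, and then conclude with Proposition~\ref{prop:compact-family-bound}.

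\emph{Reduction to plateaux.} Since $\Leb(J(f))=0$, Lebesgue-a.e. $x$ lies in some plateau. By Mañé's structural properties (iv), every plateau is periodic or eventually periodic. The set of periodic plateaux is finite, say with cycles $P_1,\dots,P_m$ (Corollary~III of Theorem~C). Hence a.e. $x$ has an iterate $f^k(x)$ lying in the closure of one of finitely many periodic plateaux. Since $\emp_n(x)$ and $\emp_n(f^k x)$ differ in $\Wone$ by at most $2k\,\diam(\Sone)/n$, we have $\cA_f(x)=\cA_f(f^k x)$. It therefore suffices to understand orbits of points in the closures $\overline{P}$ of periodic plateaux.

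\emph{Dynamics on a periodic plateau.} Take $N$ to be a common multiple of the periods of the finitely many periodic plateaux, and set $F=f^N$. By property (ii), $F$ maps each $P_i$ diffeomorphically onto itself. Extending by continuity, $F\colon\overline{P_i}\to\overline{P_i}$ is a homeomorphism of a compact interval, either orientation preserving or reversing. Propositions~\ref{prop:interval-preserving-homeo} and~\ref{prop:interval-reversing-homeo} then give, for every $y\in\overline{P_i}$, that $\emp^F_n(y)$ converges to a measure of the form $\tfrac12(\delta_a+\delta_b)$ with $a,b\in\overline{P_i}$. Let $K$ be the set of all measures $\tfrac12(\delta_a+\delta_b)$ with $a,b\in\Sone$. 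This set is compact, and the pair-grid argument used there shows that $N_{\Wone}(K,\eps)\le(1+\lceil 2D/\eps\rceil)^2$, which is polynomial in $1/\eps$.

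\emph{Conclusion.} For a.e. $x$ we have $\cA_F(f^kx)\subset K$. Corollary~\ref{cor:iterate-compact-family} then gives $\cA_f(f^kx)\subset T_N(K)$, and $T_N(K)$ is compact with polynomial covering numbers because $T_N$ is Lipschitz. Since $\cA_f(x)=\cA_f(f^kx)$, Proposition~\ref{prop:compact-family-bound} applied to $T_N(K)$ yields low emergence.

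The main obstacle is the reduction step. We must justify precisely, from Mañé's results, that every plateau eventually lands in a periodic plateau. We must also ensure that $f^N$ restricted to the closure of a periodic plateau is a genuine interval homeomorphism of $\overline{P}$ onto itself; in particular, the endpoints have to map to endpoints, which follows from monotonicity of $h$ and $h\circ f=f_d\circ h$ preserving fibers. Apart from this, note that points in $J(f)$ form a null set and can be ignored.
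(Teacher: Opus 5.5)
Your proposal is correct and follows the paper's proof. You reduce via $\Leb(J(f))=0$, Mañé's eventual periodicity of plateaux and finiteness of periodic plateaux to homeomorphisms of closed periodic plateaux, apply Propositions~\ref{prop:interval-preserving-homeo}--\ref{prop:interval-reversing-homeo}, transfer back with Corollary~\ref{cor:iterate-compact-family}, and conclude with Proposition~\ref{prop:compact-family-bound}; the only difference is cosmetic, in that you use one common multiple $N$ of all plateau periods and a single family $K$ of half-sums of two Dirac masses on $\Sone$, whereas the paper treats each cycle with its own period $q$ and takes the union of the finitely many families $T_q(K_g)$.
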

	
	\begin{proof}
        Since $\Leb(J(f))=0$, the complement $\Sone\setminus J(f)$ has full Lebesgue measure. By definition of $J(f)$, every point of this complement belongs to a plateau. Moreover, by property~(XIV) after Theorem~C in \cite{mane}, every plateau of a $C^2$ immersion is periodic or eventually periodic. Finally, by Corollary~III after Theorem~C in \cite{mane}, the periodic plateaux are finite in number.
		
Hence, Lebesgue-a.e.\ point is eventually mapped into one of finitely many periodic plateau cycles
	\[
	I_0\to I_1\to \cdots \to I_{q-1}\to I_0.
	\]
By property~(XII) in \cite{mane}, \(f\) maps every plateau
diffeomorphically onto a plateau. Hence, along each periodic plateau cycle
\[
I_0\to I_1\to \cdots \to I_{q-1}\to I_0,
\]
the first return map
\[
g:=f^q|_{I_0}:I_0\to I_0
\]
is a \(C^2\)-diffeomorphism of the open interval \(I_0\).

Writing \(I_0=(a_0,b_0)\), this map extends to a homeomorphism
\[
g:=f^q|_{\overline{I_0}}:\overline{I_0}\to\overline{I_0}.
\]
Indeed, \(f^q(I_0)=I_0\), and by continuity
\[
f^q(\overline{I_0})\subset \overline{I_0}.
\]
Since \(f^q|_{I_0}\) is a diffeomorphism onto \(I_0\), the endpoints must
be mapped to endpoints, so
\[
f^q(\{a_0,b_0\})=\{a_0,b_0\}.
\]
Thus \(g\) is an interval homeomorphism of the compact interval
\(\overline{I_0}\).

If $g$ is orientation-preserving, Proposition~\ref{prop:interval-preserving-homeo} shows that every empirical measure
of $g$ converges to a Dirac mass. Hence all possible limits belong to
\[
K_g:=\{\delta_p:p\in\overline{I_0}\}.
\]
This is a compact subset of $\cP(\overline{I_0})$ with
$O(\varepsilon^{-1})$ $W_1$-covering numbers.

If $g$ is orientation-reversing, Proposition~\ref{prop:interval-reversing-homeo} shows that every empirical measure converges to a probability supported on a periodic orbit of cardinality at most two. Hence, all possible limits belong to
\[
K_g:=
\left\{
\frac12(\delta_a+\delta_b):
a,b\in\overline{I_0}
\right\},
\]
which is compact and has $O(\varepsilon^{-2})$ $W_1$-covering numbers.

For \(x\in I_0\), the \(f^q\)-orbit of \(x\) coincides with the
\(g\)-orbit of \(x\). Hence
\[
\cA_{f^q}(x)=\cA_g(x)\subset K_g.
\]
Applying Corollary~\ref{cor:iterate-compact-family} with \(N=q\), we obtain
\[
\cA_f(x)\subset T_q(K_g),
\]
and \(T_q(K_g)\) has polynomial \(W_1\)-covering numbers.

Since removing a finite initial segment of an orbit does not change its
asymptotic measures, the same conclusion holds for every plateau that is
eventually mapped into this periodic cycle.

As there are only finitely many periodic plateau cycles, the union of the corresponding families still has polynomial covering numbers.
Proposition~\ref{prop:compact-family-bound} finishes the proof.
	\end{proof}
	
	\begin{remark}
		So, from now on, the only genuinely difficult branch is the $J(f)=\Sone$ setting.
	\end{remark}
\section{The \texorpdfstring{$J(f)=\Sone$}{J(f)=S1} case}
	
	From now on, we focus on the full-Julia case \(J(f)=\Sone\).
	
	In this case, the set of points where the empirical measures converge
	\[
	C_{f}:=\{x\in \Sone:\ \emp_n(x)\text{ converges in }\cP(\Sone)\}
	\]
	will be of major importance, since it is an $f$-invariant set.

	\begin{lemma}\label{lem:C-invariant}
		The set $C_{f}$ is Borel and $f$-invariant.
	\end{lemma}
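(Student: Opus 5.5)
We prove the two assertions separately, writing convergence of $\emp_n(x)$ as a Cauchy condition over a countable dense family of test functions.

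\textbf{Borel measurability.} Fix a countable family $\{\varphi_m\}_{m\ge1}$ dense in $C^0(\Sone)$; for instance, trigonometric polynomials with rational coefficients. Since $\cP(\Sone)$ is compact and weak-$\ast$ convergence is tested on a dense family, $\emp_n(x)$ converges if and only if, for every $m$, the sequence of Birkhoff averages
\[
S_n\varphi_m(x):=\frac1n\sum_{k=0}^{n-1}\varphi_m(f^k(x))
\]
converges in $\R$. The nontrivial direction goes as follows. If two accumulation points $\nu,\nu'$ agree on every $\varphi_m$, they agree on all of $C^0(\Sone)$ by density, because the integrals are bounded by sup norms. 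Hence the set of accumulation points is a singleton, and compactness gives convergence.

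Each map $x\mapsto S_n\varphi_m(x)$ is continuous. Writing the Cauchy criterion out, we get
\[
C_f=\bigcap_{m\ge1}\bigcap_{r\ge1}\bigcup_{N\ge1}\bigcap_{n,n'\ge N}\left\{x:\ |S_n\varphi_m(x)-S_{n'}\varphi_m(x)|\le \tfrac1r\right\}.
\]
Every set inside is closed, so $C_f$ is an $F_{\sigma\delta}$ set and in particular Borel.

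\textbf{Invariance.} The key identity compares the empirical measures of $x$ and $f(x)$:
\[
\emp_n(f(x))=\frac{n+1}{n}\,\emp_{n+1}(x)-\frac1n\delta_x .
\]
From this, $\Wone(\emp_n(f(x)),\emp_{n+1}(x))\le 2\diam(\Sone)/n\to0$. Therefore $\emp_n(f(x))$ converges if and only if $\emp_n(x)$ does, and the limits coincide. This shows $f(C_f)\subset C_f$, which is forward invariance in Ma\~n\'e's sense (Remark~\ref{rem:mane-forward-invariant}). It also shows $f^{-1}(C_f)=C_f$, so the set is fully invariant.

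\textbf{Main obstacle.} The only step needing care is the reduction of weak-$\ast$ convergence to countably many scalar Cauchy conditions. This is exactly where compactness of $\cP(\Sone)$ and separability of $C^0(\Sone)$ enter; the invariance step is a routine one-line estimate.
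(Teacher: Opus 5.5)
Your proof is correct and follows essentially the same route as the paper: Borel measurability comes from a countable Cauchy criterion built out of continuous functions of $x$, and invariance comes from an $O(1/n)$ bound on the $\Wone$-distance between the empirical measures of $x$ and $f(x)$. The differences are only cosmetic: the paper writes the Cauchy condition directly in $(\cP(\Sone),\Wone)$, using continuity of $x\mapsto\emp_n(x)$, instead of through a countable dense family of test functions, and it compares $\emp_n(f(x))$ with $\emp_n(x)$ (bound $\diam(\Sone)/n$) rather than with $\emp_{n+1}(x)$.
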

	
	\begin{proof}
		To see that $C_{f}$ is Borel measurable note that, for each \(n\ge1\), the map
	\[
	x\longmapsto \emp_n(x)\in \cP(\Sone)
	\]
	is continuous. Since \((\cP(\Sone),\Wone)\) is a compact metric space, a sequence
	converges if and only if it is Cauchy. Therefore
	\[
	C_{f}=
	\bigcap_{m\ge1}\ \bigcup_{N\ge1}\ \bigcap_{n,k\ge N}
	\left\{x\in \Sone:\ \Wone(\emp_n(x),\emp_k(x))<\frac1m\right\}.
	\]
	Since, by continuity of \(x\mapsto \emp_n(x)\), each set inside braces is open, we conclude that \(C_{f}\) is Borel.
	
		For invariance, note that
		\[
		\Wone(\emp_n(f(x)),\emp_n(x))
		=
		\Wone\!\left(
		\frac1n\sum_{j=1}^{n}\delta_{f^j(x)},
		\frac1n\sum_{j=0}^{n-1}\delta_{f^j(x)}
		\right)
		\le
		\frac{\diam(\Sone)}{n}\to0.
		\]
		Hence $\emp_n(x)$ converges if and only if $\emp_n(f(x))$ converges.
	\end{proof}
	
	Therefore, Theorem~\ref{thm:mane-E} implies:
	
	\begin{corollary}\label{cor:C-01}
		If $J(f)=\Sone$, then $\Leb(C_{f})\in\{0,1\}$.
	\end{corollary}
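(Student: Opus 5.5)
The plan is to apply Theorem~\ref{thm:mane-E} directly to $C_f$, using only what Lemma~\ref{lem:C-invariant} already provides. Theorem~\ref{thm:mane-E} says that when $J(f)=\Sone$, every invariant Borel set has Lebesgue measure $0$ or $1$. By Remark~\ref{rem:mane-forward-invariant}, ``invariant'' there means forward invariant, $f(A)\subset A$. So it suffices to check two things: that $C_f$ is Borel, and that $f(C_f)\subset C_f$.

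Borel measurability is exactly the first part of Lemma~\ref{lem:C-invariant}. There $C_f$ is written as a countable intersection and union of sets that are open, because $x\mapsto\emp_n(x)$ is continuous. For forward invariance I would reuse the estimate from the same lemma:
\[
\Wone(\emp_n(f(x)),\emp_n(x))\le \diam(\Sone)/n .
\]
This shows that $(\emp_n(f(x)))_n$ converges if and only if $(\emp_n(x))_n$ converges, and then both sequences have the same limit. Hence $x\in C_f$ implies $f(x)\in C_f$, which gives $f(C_f)\subset C_f$. The same estimate also gives $f^{-1}(C_f)=C_f$, so $C_f$ is in fact fully invariant. That is stronger than needed, and the remark notes this extra strength is not required.

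With $C_f$ Borel and forward invariant, Theorem~\ref{thm:mane-E} immediately yields $\Leb(C_f)\in\{0,1\}$.

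The only point needing care is matching the notion of invariance in Mañé's theorem to the property of $C_f$ that has been established. Because $C_f$ is fully invariant, it satisfies forward invariance and also any stronger convention, so this step should cause no difficulty. No other obstacle is expected; the corollary is a direct combination of Lemma~\ref{lem:C-invariant} and Theorem~\ref{thm:mane-E}.
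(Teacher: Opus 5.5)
Your proposal is correct and matches the paper's argument. The paper also derives the corollary by combining Lemma~\ref{lem:C-invariant} with Ma\~n\'e's $0$--$1$ law, Theorem~\ref{thm:mane-E}: the lemma gives that $C_f$ is Borel and, via $\Wone(\emp_n(f(x)),\emp_n(x))\le\diam(\Sone)/n$, that $x\in C_f$ if and only if $f(x)\in C_f$. Your remark that this full invariance makes the forward-invariance convention of Remark~\ref{rem:mane-forward-invariant} harmless is a sound check.
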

    
\subsection{If \texorpdfstring{$C_{f}$}{Cf} has full-measure, we are done}
	
In the case where $\Leb(C_{f})=1$, we can show that emergence is minimal. But to show that, we need a lemma:
	
	\begin{lemma}\label{lem:ae-constancy}
		Assume that every invariant Borel set has Lebesgue measure $0$ or $1$.
		Let $(Y,d)$ be a separable metric space and let $\Phi\colon \Sone\to Y$ be Borel with
		\[
		\Phi\circ f=\Phi.
		\]
		Then $\Phi$ is Lebesgue-a.e.\ constant.
	\end{lemma}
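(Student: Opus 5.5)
The plan is to use separability of $Y$ to reduce constancy of $\Phi$ to the $0$--$1$ law applied to countably many preimages of balls. First I fix a countable dense subset $\{y_k\}_{k\ge1}\subset Y$. Then I consider the countable family of open balls $B_{k,m}:=B(y_k,1/m)$ and the Borel sets
\[
A_{k,m}:=\Phi^{-1}(B_{k,m}).
\]
Since $\Phi\circ f=\Phi$, we get $x\in A_{k,m}$ if and only if $f(x)\in A_{k,m}$. Hence $f^{-1}(A_{k,m})=A_{k,m}$, and in particular $f(A_{k,m})\subset A_{k,m}$. So each $A_{k,m}$ is invariant in the forward sense of Remark~\ref{rem:mane-forward-invariant}, and the hypothesis gives $\Leb(A_{k,m})\in\{0,1\}$.

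Next I remove a null set. Let $Z$ be the union of all $A_{k,m}$ with $\Leb(A_{k,m})=0$, and let $W$ be the intersection of all $A_{k,m}$ with $\Leb(A_{k,m})=1$. Both families are countable, so $G:=W\setminus Z$ has full Lebesgue measure.

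I then show that $\Phi$ is constant on $G$. Take $x,x'\in G$, suppose $d(\Phi(x),\Phi(x'))=r>0$, and choose $m$ with $2/m<r/2$. By density there is some $k$ with $d(y_k,\Phi(x))<1/m$, so $x\in A_{k,m}$. Since $x\notin Z$, the set $A_{k,m}$ cannot be null, so it has full measure. Then $x'\in W\subset A_{k,m}$, which gives $d(\Phi(x'),\Phi(x))<2/m<r$, a contradiction. Hence $\Phi$ is constant on $G$, which proves the lemma.

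The only subtle point is the direction of invariance. Mañé's law is stated for sets with $f(A)\subset A$, and full invariance of $A_{k,m}$ implies this, so the hypothesis applies. No other obstacle is expected; measurability is immediate because $\Phi$ is Borel and balls are open.
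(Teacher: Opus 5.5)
Your proof is correct, and it uses the same mechanism as the paper's proof. Preimages under $\Phi$ of open sets are fully invariant, so (as you rightly check, they are also forward invariant) they have Lebesgue measure $0$ or $1$. Separability then reduces everything to countably many such sets. The difference is in how the argument is organized.

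The paper passes to the pushforward $\nu=\Phi_*\Leb$ on $Y$ and shows that $\nu$ takes only the values $0,1$ on open sets. It then argues in three steps: the Lindel\"of property of $Y$ gives $\supp(\nu)\neq\varnothing$, two disjoint balls give $\#\supp(\nu)\le 1$, and continuity from above gives $\nu=\delta_{y_0}$, so $\Phi=y_0$ a.e.

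You stay in the domain instead. Intersecting the full-measure sets $A_{k,m}$ and discarding the null ones gives an explicit full-measure set $G$ on which $\Phi$ is constant. Your triangle-inequality argument then replaces the support and Dirac-mass steps.

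Your route is slightly more elementary and self-contained, since it needs no support, no Lindel\"of argument and no continuity from above. The paper's route isolates a reusable fact: a Borel probability on a separable metric space that is $\{0,1\}$-valued on open sets is a Dirac mass. As a minor point, $2/m<r$ already suffices in your choice of $m$; the extra factor $1/2$ is harmless.
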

	
	\begin{proof}
		Let
		\[
		\nu:=\Phi_*\Leb
		\]
		be the pushforward of Lebesgue by $\Phi$.
		
		We first show that every open set in $Y$ has $\nu$-measure either $0$ or $1$.
		Indeed, if $U\subset Y$ is open, then
		\[
		A_U:=\Phi^{-1}(U)
		\]
		is Borel in $\Sone$.
		Since $\Phi\circ f=\Phi$, we have
		\[
		x\in A_U
		\iff \Phi(x)\in U
		\iff \Phi(f(x))\in U
		\iff f(x)\in A_U,
		\]
		hence
		\[
		f^{-1}(A_U)=A_U.
		\]
		So $A_U$ is invariant.
		By hypothesis,
		\[
		\nu(U)=\Leb(\Phi^{-1}(U))=\Leb(A_U)\in\{0,1\}.
		\]
		
		We now show that $\nu$ is a Dirac mass.
		
	First, notice that $\supp(\nu)\neq\varnothing$. Indeed, if $\supp(\nu)=\varnothing$, then every point of $Y$ has an open neighborhood of $\nu$-measure $0$.
		Since $Y$ is a separable metric space, we could obtain a countable subcover of $Y$ of balls of measure zero, hence
		\[
		\nu(Y)=0,
		\]
		contradicting $\nu(Y)=1$.
		
	On the other hand, $\supp(\nu)$ contains at most one point. Suppose not, i.e., there exist $y_1\neq y_2$ in $\supp(\nu)$.
		Set
		\[
		r:=\frac13 d(y_1,y_2)>0.
		\]
		Then the open balls
		\[
		B_1:=B(y_1,r),\qquad B_2:=B(y_2,r)
		\]
		are disjoint.
		Because $y_1,y_2\in \supp(\nu)$, both $B_1$ and $B_2$ have positive $\nu$-measure.
		Since every open set has measure $0$ or $1$, both actually have measure $1$,
		contradicting disjointness.
		Hence $\supp(\nu)=\{y_0\}$ for some $y_0\in Y$.
		
	This implies that $\nu=\delta_{y_0}$, since for every $n\ge1$, the open ball
		\[
		B\!\left(y_0,\frac1n\right)
		\]
		meets the support, hence has positive $\nu$-measure.
		So it has measure $1$.
		By continuity from above,
		\[
		\nu(\{y_0\})
		=
		\lim_{n\to\infty}\nu\!\left(B\!\left(y_0,\frac1n\right)\right)
		=
		1.
		\]
		Thus $\nu=\delta_{y_0}$.
		
		Finally,
		\[
		1=\nu(\{y_0\})=\Leb(\Phi^{-1}(\{y_0\})),
		\]
		so $\Phi(x)=y_0$ for Lebesgue-a.e.\ $x$.
	\end{proof}
	
	This lemma implies that the empirical measure function is constant a.e. and hence the emergence of $f$ is minimal:
	
	\begin{corollary}\label{cor:C-full-unique}
		If $\Leb(C_{f})=1$, then there exists an invariant probability measure $\mu$ such that
		\[
		\emp_n(x)\wto \mu
		\qquad\text{for Lebesgue-a.e. }x.
		\]
		In particular,
		\[
		\mathcal{E}_{\Leb}(f)(\eps)=1
		\qquad\forall \eps>0,
		\]
		so $f$ has low emergence with respect to Lebesgue measure.
	\end{corollary}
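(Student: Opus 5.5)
We will apply Lemma~\ref{lem:ae-constancy} to a limit map defined on $C_f$. Work under the standing assumption $J(f)=\Sone$, which is the setting of this section; Theorem~\ref{thm:mane-E} then says that every invariant Borel set has Lebesgue measure $0$ or $1$.

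First we build the map. Fix any $\mu_0\in\cP_f(\Sone)$. Define $\Phi\colon\Sone\to\cP(\Sone)$ by
\[
\Phi(x):=\lim_{n\to\infty}\emp_n(x)\quad\text{if }x\in C_f,
\qquad
\Phi(x):=\mu_0\quad\text{otherwise}.
\]

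Next we check that $\Phi$ is Borel. On $C_f$, $\Phi$ is the pointwise limit of the continuous maps $x\mapsto\emp_n(x)$ into the compact metric space $(\cP(\Sone),\Wone)$, so it is Borel there. By Lemma~\ref{lem:C-invariant}, $C_f$ is a Borel set, so gluing in the constant value on its complement keeps $\Phi$ Borel.

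Then we check invariance. Lemma~\ref{lem:C-invariant} shows that $x\in C_f$ if and only if $f(x)\in C_f$. Its proof also gives $\Wone(\emp_n(f(x)),\emp_n(x))\le \diam(\Sone)/n$. Hence on $C_f$ the two limits agree, and off $C_f$ both values equal $\mu_0$. So $\Phi\circ f=\Phi$ everywhere.

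The target space $\cP(\Sone)$ is compact metric, hence separable. Lemma~\ref{lem:ae-constancy} therefore gives a measure $\mu$ with $\Phi=\mu$ Lebesgue-a.e. Since $\Leb(C_f)=1$, we get $\emp_n(x)\wto\mu$ for Lebesgue-a.e. $x$. Any such limit is an asymptotic measure, so Remark~\ref{rem:asymptotic-invariant} shows that $\mu$ is $f$-invariant.

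For the emergence bound, take $N=1$ and $\nu_1=\mu$. Then $\Wone(\emp_n(x),\mu)\to0$ for a.e. $x$, and this quantity is bounded by $\diam(\Sone)$. Dominated convergence gives $\int\Wone(\emp_n(x),\mu)\,d\Leb\to0\le\eps$. Hence $\cE_{\Leb}(f)(\eps)\le1$, and the minimum in the definition is over $N\in\N$ with $N\ge1$, so equality holds. Since $\log\log 1$ is degenerate, we adopt the convention that a constant emergence function has order $0$, so $f$ has low emergence.

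The only delicate point is the measurability of $\Phi$ and its exact invariance off $C_f$. Defining $\Phi$ with the constant value $\mu_0$ outside $C_f$ handles both, because Lemma~\ref{lem:C-invariant} gives $f^{-1}(C_f)=C_f$ exactly, not just forward invariance.
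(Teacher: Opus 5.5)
Your proposal is correct and follows the paper's proof essentially step by step. You use the same glued limit map (constant off $C_f$), the same Borel and exact-invariance checks via Lemma~\ref{lem:C-invariant}, Lemma~\ref{lem:ae-constancy} for a.e.\ constancy, and the single measure $\mu$ for the emergence bound. Your explicit appeal to dominated convergence and your remark about the degenerate value $\log\log 1$ make precise points the paper leaves implicit, without changing the argument.
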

	
	\begin{proof}
	Choose some $\eta_0\in\cP(\Sone)$ and define
	\[
	\widetilde\Phi(x):=
	\begin{cases}
		\displaystyle\lim_{n\to\infty}\emp_n(x), & x\in C_{f},\\
		\eta_0, & x\notin C_{f}.
	\end{cases}
	\]

	For every $n\ge1$, the map $x\mapsto\emp_n(x)$ is continuous. Hence, on the Borel set $C_{f}$, the map
	\[
	x\longmapsto\lim_{n\to\infty}\emp_n(x)
	\]
	is the pointwise limit of Borel maps and is therefore Borel. Since
	$\widetilde\Phi$ is obtained by extending this map by the constant
	$\eta_0$ on the Borel set $\Sone\setminus C_{f}$, it follows that
	$\widetilde\Phi$ is Borel.

	Moreover, the estimate in the proof of Lemma~\ref{lem:C-invariant} shows that
	\[
	\Wone(\emp_n(f(x)),\emp_n(x))\longrightarrow0.
	\]
	In particular, $x\in C_{f}$ if and only if $f(x)\in C_{f}$, and, whenever
	$x\in C_{f}$, the two sequences have the same limit. Therefore
	\[
	\widetilde\Phi\circ f=\widetilde\Phi
	\qquad\text{on }\Sone.
	\] 
    Lemma~\ref{lem:ae-constancy} now implies that $\widetilde\Phi$ is
	Lebesgue-a.e.\ constant, say equal to some $\mu\in\cP(\Sone)$. Since $\Leb(C_{f})=1$, we obtain $\emp_n(x)\wto\mu$ for Lebesgue-a.e. $x$. By Remark~\ref{rem:asymptotic-invariant}, the measure $\mu$ is $f$-invariant.

	Finally, taking the single measure $\mu$ in the definition of emergence gives $\mathcal E_{\Leb}(f)(\eps)=1$ for all $\eps>0$.
\end{proof}

	\begin{remark}
		So from now on the only remaining case is
		\[
		J(f)=\Sone,\qquad |\deg f|\ge2,\qquad \Leb(C_{f})=0.
		\]
	\end{remark}
	
	\section{Reduction to finitely many indifferent fixed points}

    We shall reduce the remaining case to maps satisfying the following four assumptions. Given a \(C^2\) immersion \(g:\Sone\to\Sone\), we say that \(g\) satisfies (\ref{hyp:H1})--(\ref{hyp:H4}) if:

\begin{enumerate}[label=(H\arabic*),ref=H\arabic*]
	\item\label{hyp:H1}
	\(g\) is topologically conjugate to the linear expanding map
	of degree \(d=\deg(g)\), with \(|d|\ge2\);

	\item\label{hyp:H2}
	there exists a finite set
	\[
	P\subset\operatorname{Fix}(g)
	\]
	such that every \(p\in P\) satisfies
	\[
	|Dg(p)|=1;
	\]
	we call \(P\) the set of indifferent fixed points;

	\item\label{hyp:H3}
	every \(q\in\operatorname{Per}(g)\setminus P\) satisfies
	\[
	|Dg^{\pi_g(q)}(q)|>1,
	\]
	where \(\pi_g(q)\) denotes the least period of \(q\) for \(g\);

	\item\label{hyp:H4}
	for Lebesgue-a.e.\ \(x\in\Sone\),
	\[
	\#\cA_g(x)>1.
	\]
\end{enumerate}

\begin{proposition}\label{prop:reduction-finite-neutral}
Assume
\[
J(f)=\Sone,\qquad |\deg f|\ge2,\qquad \Leb(C_f)=0.
\]
Then there exists \(N\ge1\) such that the iterate
\[
F:=f^N
\]
satisfies assumptions \emph{\ref{hyp:H1}--\ref{hyp:H4}}.

In particular, to prove low emergence for \(f\), it is enough to prove
low emergence for \(F\).
\end{proposition}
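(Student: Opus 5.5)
We take $F=f^N$ for a suitable $N$ and check (H1)--(H4) one at a time. The last assertion is then Corollary~\ref{cor:iterate-emergence}: low emergence of $F$ gives low emergence of $f$.

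\textbf{Choosing $N$ and verifying (H2), (H3).} By Corollary~\ref{cor:mane-bounded-periods}, applied with $\Lambda=\Sone$ (there are no critical points), the periods of sinks and non-hyperbolic periodic points of $f$ are bounded, say by $m$. Sinks cannot occur when $J(f)=\Sone$: the immediate basin of a sink is an open interval on which $h$ collapses, so it would produce a plateau. Hence every non-repelling periodic point of $f$ has $|Df^{\pi}|=1$ and period at most $m$. We take $N$ to be a multiple of $2\cdot m!$. Then each such point is fixed by $F$ and satisfies $|DF(p)|=1$; the factor $2$ absorbs any sign $Df^\pi=-1$.

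We need the set $P$ of these points to be finite. Here we use Mañé's Theorem~C, which says that for a $C^2$ immersion the non-hyperbolic periodic points of bounded period are finite in number; this is part of the corollaries after Theorem~C in \cite{mane}. If only the bounded-period statement were available, we would argue directly: an accumulation of neutral fixed points of $f^{m!}$ forces a non-degenerate interval of fixed points or neutral points by analyticity-type arguments. That does not work in $C^2$, so the citation is the honest route. Every other periodic point $q$ of $f$ is repelling for $f$, and since $F$-periods are divisors of $f$-period multiples, we get $|DF^{\pi_F(q)}(q)|>1$. This gives (H2) and (H3).

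\textbf{(H1).} $J(f)=\Sone$ means there are no plateaux, so the monotone degree-one semiconjugacy $h$ has singleton fibers and is a homeomorphism. Thus $f$ is conjugate to $z\mapsto z^d$, and $F$ is conjugate to $z\mapsto z^{d^N}$ with $|d^N|\ge2$.

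\textbf{(H4).} We have $\#\cA_F(x)=1$ exactly when $x\in C_F$, and Corollary~\ref{cor:iterate-convergence} gives $C_F\subset C_f$. Hence $\Leb(C_F)=0$, so $\#\cA_F(x)>1$ for Lebesgue-a.e.\ $x$; note $\cA_F(x)$ is nonempty by compactness.

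\textbf{Expected obstacle.} The main difficulty is the finiteness in (H2). It has to be extracted carefully from Mañé: his results give finiteness of non-hyperbolic periodic orbits in compact sets away from critical points, together with bounded periods. The sink exclusion also needs a check that an attracting basin yields a plateau, that is, an interval with constant $h$. This holds because $h$ conjugates to an expanding map, which has no attracting behavior on intervals.
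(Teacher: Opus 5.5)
Your plan follows the paper's proof closely. You bound the periods of non-hyperbolic orbits with Corollary~\ref{cor:mane-bounded-periods}, take $N$ a common multiple of them, check (H1)--(H4), and transfer low emergence back via Corollary~\ref{cor:iterate-emergence}. The following steps are correct:
\begin{itemize}
\item your checks of (H1), (H3) and (H4);
\item the computation $|DF(p)|=|Df^{r}(p)|^{N/r}=1$;
\item the exclusion of sinks: your argument that the basin of a sink collapses to a plateau is a fine substitute for the paper's remark that $z\mapsto z^{d^N}$ has no attracting orbits.
\end{itemize}
The factor $2$ in $N$ is harmless but unnecessary, since (H2) only asks for $|DF(p)|=1$.

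\textbf{The gap is the finiteness of $P$.} You rest it on a result that is not in Mañé and is false as you state it: a $C^2$ immersion need not have finitely many non-hyperbolic periodic points of bounded period. For example, a smooth degree-two immersion can coincide with the identity on an arc (a fixed plateau). To build one, take a lift with $F'=1+\phi$, where $\phi\ge0$ is supported away from that arc and has integral $1$. This gives a continuum of neutral fixed points. Mañé's Corollary~I only bounds the periods; it says nothing about how many such points there are.

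\textbf{The fix is elementary and already in your hands: prove (H1) first.} Since $f$ is topologically conjugate to $z\mapsto z^d$, the conjugacy maps $\operatorname{Fix}(f^n)$ bijectively onto the $|d^n-1|$ fixed points of $z\mapsto z^{d^n}$. So $f$ has only finitely many periodic points of each period. Together with the bound $m$ on the periods of non-hyperbolic orbits, this makes $P$ finite.

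This is exactly the paper's argument. It needs no analyticity and no further input from Mañé, so the step you flagged as the main obstacle is immediate once the conjugacy is in place.
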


\begin{proof}
	Since $J(f)=\Sone$ and $|\deg f|\ge2$, the monotone semiconjugacy to $z\mapsto z^d$, where $d=\deg(f)$, has no plateaux, hence it is a topological conjugacy.

	By Corollary~\ref{cor:mane-bounded-periods}, the set of periods of non-hyperbolic periodic orbits is bounded. Since $f$ is conjugate to $z\mapsto z^d$, and the latter has only finitely many periodic points of each fixed period, it follows that $f$ has only finitely many non-hyperbolic periodic orbits.

	Let $N$ be a common multiple of the periods of these orbits (with $N=1$ if there are none) and set $F:=f^N$. Since $f$ is conjugate to $z\mapsto z^d$, the map $F$ is conjugate to $z\mapsto z^{d^N}$, so assumption~(\ref{hyp:H1}) holds.

	Let \(P\) be the set of non-hyperbolic periodic points of \(f\). As observed above, \(P\) is finite. We first check what happens to these points after passing to the iterate \(F=f^N\).

Fix \(p\in P\), and let \(r=\pi_f(p)\) be its least period for \(f\). Recall that every point of $P$ is fixed for $F$. We will show that it is also indifferent.

By the choice of \(N\), the integer \(r\) divides \(N\). Writing $N=\ell r$, the chain rule and the fact that \(p\) is fixed by \(f^r\), give
\[
DF(p)
=
Df^N(p)
=
D(f^r)^\ell(p)
=
\bigl(Df^r(p)\bigr)^\ell.
\]
Since \(p\) is non-hyperbolic for \(f\),
\[
|Df^r(p)|=1,
\]
and therefore
\[
|DF(p)|
=
|Df^r(p)|^\ell
=
1.
\]
Hence every point of \(P\) is an indifferent fixed point of \(F\).

We now check that passing to the iterate \(F=f^N\) does not create any
new non-hyperbolic periodic points. Suppose that \(q\) is a non-hyperbolic periodic point of \(F\), and let \(k=\pi_F(q)\) be its least period for \(F\). Then $F^k(q)=q$, that is, $f^{Nk}(q)=q$. Thus \(q\) is also periodic for \(f\). Let \(r=\pi_f(q)\) be its least
period for \(f\). Since \(f^{Nk}(q)=q\), the least period \(r\) divides
\(Nk\). Writing $Nk=\ell r$, the chain rule gives
\[
DF^k(q)
=
Df^{Nk}(q)
=
D(f^r)^\ell(q)
=
\bigl(Df^r(q)\bigr)^\ell.
\]
Since \(q\) is non-hyperbolic for \(F\), $|DF^k(q)|=1$ and, consequently,
\[
1
=
|Df^r(q)|^\ell,
\]
and hence
\[
|Df^r(q)|=1.
\]
Therefore \(q\) is already a non-hyperbolic periodic point of the original
map \(f\), so \(q\in P\).

We have thus proved that \(P\) is precisely the set of non-hyperbolic
periodic points of \(F\), and that every point of \(P\) is an indifferent
fixed point of \(F\). In particular, assumption~(\ref{hyp:H2}) holds.

We claim that \(F\) has no sinks. Indeed, \(F\) is topologically
conjugate to \(z\mapsto z^{d^N}\), and the latter has no attracting
periodic orbits. Hence \(F\) has no attracting periodic orbits either.

Now let \(q\in\operatorname{Per}(F)\setminus P\). By the preceding
paragraph, \(P\) is precisely the set of non-hyperbolic periodic points
of \(F\). Therefore $|DF^{\pi_F(q)}(q)|\neq1$. Since \(F\) has no sinks, we cannot have $|DF^{\pi_F(q)}(q)|<1$. Consequently,
\[
|DF^{\pi_F(q)}(q)|>1,
\]
and (\ref{hyp:H3}) holds.

	Finally, by Corollary~\ref{cor:iterate-convergence}, $C_F\subset C_f$. Since $\Leb(C_f)=0$, it follows that $\Leb(C_F)=0$. On the other hand, $\cP(\Sone)$ is compact, so a sequence of probability measures with a unique accumulation point must converge. Consequently, for every $x\notin C_F$,
	\[
	\#\cA_F(x)>1.
	\]
	Hence, (\ref{hyp:H4}) holds for Lebesgue almost every $x\in\Sone$.

	Therefore, $F$ satisfies (\ref{hyp:H1})--(\ref{hyp:H4}). By
	Corollary~\ref{cor:iterate-emergence}, low emergence for $F$ implies low emergence for $f$, which concludes the proof.
\end{proof}

\section{Maps with finitely many indifferent fixed points}
Throughout this section, we consider a \(C^2\) immersion $f:\Sone\to\Sone$ satisfying assumptions (\ref{hyp:H1})--(\ref{hyp:H4}).

The goal of this section is to prove the following. 

\begin{proposition}\label{prop.lowemergenceunderh1h4}
Let \(f:\Sone\to\Sone\) be a \(C^2\) immersion satisfying assumptions
\emph{(\ref{hyp:H1})--(\ref{hyp:H4})}. Then \(f\) has low emergence.
\end{proposition}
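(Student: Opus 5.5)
The intro indicates the route: show that (H1)--(H4) lead to a contradiction, so the proposition holds vacuously (or trivially). The plan is to prove that for Lebesgue-a.e.\ $x$, every asymptotic measure $\mu\in\cA_f(x)$ is supported on the finite set $P$ of indifferent fixed points. Such measures lie in the finite-dimensional simplex $K:=\{\sum_{p\in P}t_p\delta_p : t_p\ge0,\ \sum_p t_p=1\}$. This simplex has $W_1$-covering numbers of order $\varepsilon^{-(\#P-1)}$, which is polynomial, so Proposition~\ref{prop:compact-family-bound} gives low emergence directly. If the stronger claim also holds, namely that no such Lebesgue-a.e.\ behavior is compatible with (H4), we get a contradiction, and the statement is vacuous. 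Either conclusion suffices, so I aim for the support statement.

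\emph{Step 1 (entropy formula).} By (H1), Theorem~\ref{thm:entropyfor} applies. So there is a full-measure set $\Gamma$ such that every $\mu\in\cA_f(x)$ with $x\in\Gamma$ satisfies $h_\mu(f)=\int\log|Df|\,d\mu$. By Lemma~\ref{lem:decomposicao}, a.e.\ ergodic component $\mu_\omega$ of such a $\mu$ satisfies the same formula.

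\emph{Step 2 (key claim: positive exponent off $P$).} Let $\mu_\omega$ be any ergodic invariant measure with $\mu_\omega(P)=0$, equivalently $\mu_\omega$ not a Dirac mass at a point of $P$. I would show $\int\log|Df|\,d\mu_\omega>0$. This is the main obstacle. The plan is to use Mañé's main technical result (Theorem~A of \cite{mane}): for a $C^2$ map, on a compact invariant set $\Lambda$ avoiding critical points and non-hyperbolic periodic points, and containing no sinks, $f$ is uniformly expanding. For ergodic $\mu_\omega$ with $\mu_\omega(P)=0$, fix a small neighborhood $V$ of $P$. Points leaving $V$ spend bounded proportions of time near $P$ only through long passages close to the indifferent points, and along such passages the derivative is still $\ge1-o(1)$ at the fixed point. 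Meanwhile $\Lambda_V:=\bigcap_n f^{-n}(\Sone\setminus V)$ is uniformly expanding by Mañé's theorem, using (H3) and the absence of sinks. First I would try a first-return (inducing) argument to $\Sone\setminus V$. The obstacle is controlling the entry and exit of $V$: a neutral passage contracts at most by a bounded distortion factor. Here the $C^2$ hypothesis and $|Df|\ge$ a constant near $P$ should bound the loss. Alternatively, if $\int\log|Df|\,d\mu_\omega\le0$, then Ruelle's inequality forces zero entropy and zero exponent. Applying Mañé's Theorem to $\operatorname{supp}\mu_\omega$, which avoids $P$ when the support is not in $P$, gives a contradiction once we handle supports that accumulate on $P$ via the neighborhood argument above.

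\emph{Step 3 (conclusion).} With Step 2, Lemma~\ref{lem:quian} shows that every ergodic component of $\mu\in\cA_f(x)$, $x\in\Gamma$, is either some $\delta_p$ with $p\in P$ or an absolutely continuous, hence physical, ergodic measure. Now use the $0$--$1$ law (Theorem~\ref{thm:mane-E}), since $J(f)=\Sone$ under (H1). If some physical ergodic measure $\mu_0$ exists, its basin is invariant and has positive measure, so it has full measure. Then $\emp_n(x)\to\mu_0$ a.e., contradicting (H4). Hence no component is absolutely continuous, and every $\mu\in\cA_f(x)$ is supported on $P$, i.e.\ lies in $K$. Proposition~\ref{prop:compact-family-bound} then gives $\cE_{\Leb}(f)(\varepsilon)=O(\varepsilon^{-\#P})$, hence low emergence. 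The key difficulty remains Step 2.
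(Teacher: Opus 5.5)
Your Steps~1 and~3 follow the paper's route: the entropy formula for asymptotic measures, the ergodic decomposition, Lemma~\ref{lem:quian}, and the fact that a physical measure is incompatible with (\ref{hyp:H4}). The $0$--$1$ law is not needed there, since a basin of positive Lebesgue measure already contradicts (\ref{hyp:H4}).

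The genuine gap is Step~2: positivity of $L(\mu_\omega)$ for every ergodic $\mu_\omega\neq\delta_p$. This is the heart of the proposition, and you leave it unproved. Neither of the two routes you sketch works as stated.

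\begin{itemize}
\item The assertion that $\operatorname{supp}\mu_\omega$ avoids $P$ whenever $\mu_\omega$ is not a Dirac mass on $P$ is false. The condition $\mu_\omega(P)=0$ is perfectly compatible with $P\cap\operatorname{supp}\mu_\omega\neq\emptyset$, and this intermittent situation is exactly the one that must be handled (the absolutely continuous measure of a Manneville--Pomeau map has full support, for instance). In that case $\mu_\omega$-a.e.\ orbit enters every neighbourhood $V$ of $P$. So $\mu_\omega(\Lambda_V)=0$, and uniform expansion on $\Lambda_V$ (Ma\~n\'e's Theorem~A) tells you nothing about $\mu_\omega$.
\item The inducing idea does not close the gap either. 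A per-step bound $|Df|\ge 1-o(1)$ or $|Df|\ge c$ near $P$ says nothing about the product of derivatives over an arbitrarily long passage near an indifferent point. You would need a real distortion estimate for entire passages.
\item Even a uniformly bounded loss per excursion would not yield $L(\mu_\omega)>0$. The estimate $|Df^n|\ge C\sigma^n$ off $V$ comes with a constant $C$ that may be smaller than $1$, and the stays off $V$ between excursions may be short.
\end{itemize}

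The missing idea is to avoid any local analysis near $P$ and use Ma\~n\'e's return-map estimates instead. The paper's argument runs as follows.

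\begin{enumerate}
\item Measures on periodic orbits are discarded via (\ref{hyp:H3}).
\item For $\Lambda=\operatorname{supp}\mu$ one has $f(\Lambda)=\Lambda$, so $\Lambda^\infty=\Lambda$ contains non-periodic points.
\item Lemma~\ref{lem:maneI2} gives an adapted interval $J$ meeting $\Lambda$.
\item Lemma~\ref{lem:maneI3} gives $\lambda<1$ with $|\psi'|\le\lambda$ for every $\psi\in R(\Lambda,J)$.
\item For $\mu$-a.e.\ $y\in J$, adaptedness and minimality of the return time $\tau(y)$ show that the inverse branch of $f^{\tau(y)}$ sending $\Phi(y)$ back to $y$ is a return map. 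Hence $|D\Phi(y)|\ge\lambda^{-1}$ (Lemma~\ref{lem:estimapoincare}).
\item The Kac-type identity of Lemma~\ref{lem:induzexpoente} then gives $L(\mu)\ge\mu(J)\log\lambda^{-1}>0$.
\end{enumerate}

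The uniform contraction of return branches absorbs all passages near $P$, however long, and does not require $\operatorname{supp}\mu$ to stay away from $P$. This is the ingredient your Step~2 needs.
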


Notice that this proposition completes the proof of Theorem~\ref{main}.


\subsection{An estimate on Lyapunov exponents}

Given $\mu\in\cps$, for ease of notation, we shall denote its Lyapunov exponent by  
\[
L(\mu)\eqdef\int \log|Df(x)|d\mu(x).
\]
In this subsection, we shall prove:

\begin{lemma}
\label{lem:expoentepositivo}
Let $f$ be a $C^2$ immersion of $\mathbb{S}^1$ satisfying assumptions~\emph{(\ref{hyp:H1})--(\ref{hyp:H3})}. Given an ergodic $f$-invariant measure $\mu\in\cP_f^{\operatorname{erg}}(\mathbb{S}^1)$, if $\mu\neq\delta_p$ for every $p\in P$ then $L(\mu)>0$. 
\end{lemma}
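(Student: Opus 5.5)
Since $\mu$ is ergodic and $\mu\neq\delta_p$ for $p\in P$, and $P$ is a finite set of fixed points, we get $\mu(P)=0$; indeed each $\{p\}$ is invariant, so $\mu(\{p\})\in\{0,1\}$. We shall then prove $L(\mu)>0$ using Mañé's main technical result in \cite{mane}, Theorem~A. It says that if $\Lambda$ is a compact forward invariant set containing no critical points, no sinks and no non-hyperbolic periodic points, then $\Lambda$ is expanding: there are $C>0$ and $\lambda>1$ with $|Df^n(x)|\ge C\lambda^n$ for $x\in\Lambda$.

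\textbf{Step 1 (choice of $\Lambda$).} Fix a small $\delta>0$ and let $V_\delta$ be the open $\delta$-neighbourhood of $P$. Put
\[
\Lambda_\delta:=\bigcap_{n\ge0}f^{-n}\bigl(\Sone\setminus V_\delta\bigr),
\]
the set of points whose forward orbit never enters $V_\delta$. This set is compact and forward invariant. It has no critical points because $f$ is an immersion. It has no sinks and no non-hyperbolic periodic points: any periodic orbit in $\Lambda_\delta$ avoids $P$, so it is repelling by (\ref{hyp:H3}). Mañé's Theorem~A therefore gives $|Df^n|\ge C_\delta\lambda_\delta^n$ on $\Lambda_\delta$. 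If $\supp\mu\cap P=\emptyset$, then for small $\delta$ we have $\supp\mu\subset\Lambda_\delta$, and Birkhoff's theorem gives $L(\mu)\ge\log\lambda_\delta>0$.

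\textbf{Step 2 (support meeting $P$).} This is the main obstacle, since $\mu$ may still accumulate on $P$ while giving it zero mass. Here I would use the local dynamics near each $p\in P$. By (\ref{hyp:H1}), $f$ is topologically expanding, so each $p$ is topologically repelling: no point near $p$ other than $p$ stays near $p$ forever under forward iteration.

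The plan is to split $\mu$-typical orbits into excursions. Each orbit alternates between time spent in $V_\delta$ and time spent outside it. Since $\mu(P)=0$ and $p$ is topologically repelling, every visit to $V_\delta$ is finite. Two estimates are needed:
\begin{enumerate}
\item[(a)] Orbit pieces outside $V_\delta$ gain derivative exponentially. This is a non-invariant version of Step 1: apply Mañé's Theorem~A, or its variant for orbit segments in the complement of a neighbourhood of the non-hyperbolic points, to the compact set of points whose orbit segment avoids $V_\delta$.
\item[(b)] An orbit piece inside $V_\delta$ loses only a bounded amount of derivative. Near $p$ we have $|Df|\ge 1-O(\delta)$. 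With $C^2$ bounded distortion, the passage through $V_\delta$ of a topologically repelling neutral point should contract by at most a uniform constant. Comparing the passage with the orbit of a nearby point that eventually exits, the net derivative gained is roughly the ratio of exit length to entry length, which is bounded below.
\end{enumerate}
Together these give $\log|Df^n(x)|\ge c\,\#\{0\le k<n: f^k(x)\notin V_\delta\}-C$ for typical $x$. Since $\mu(\Sone\setminus V_\delta)>0$ for small $\delta$ (because $\mu\neq\delta_p$), Birkhoff's theorem yields $L(\mu)\ge c\,\mu(\Sone\setminus V_\delta)>0$.

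An alternative route for Step 2, if (b) proves delicate, is by contradiction. Suppose $L(\mu)\le0$. By the Ruelle inequality, $h_\mu(f)\le\max(L(\mu),0)=0$. Separately, for a one-dimensional $C^2$ non-atomic ergodic measure, Mañé's Theorem~A, applied to the compact sets $\Lambda_\delta$ exhausting $\supp\mu$ minus a $\mu$-null set, should give $L(\mu)\ge0$. Combining these leaves only $L(\mu)=0$, and that case would then be excluded by estimate (b).
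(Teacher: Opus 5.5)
\textbf{Verdict: there is a genuine gap in Step~2.} Step~1 is correct: when $\supp\mu\cap P=\emptyset$, Ma\~n\'e's Theorem~A applies to $\supp\mu$ and gives $L(\mu)>0$. The problem is where you combine (a) and (b) into $\log|Df^n(x)|\ge c\,\#\{0\le k<n: f^k(x)\notin V_\delta\}-C$.

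\textbf{Why (a) and (b) do not combine.} The non-invariant version of Ma\~n\'e's estimate (his Theorem~B) gives $|Df^j(y)|\ge C_\delta\lambda_\delta^j$ along an orbit segment avoiding $V_\delta$. The constant $C_\delta$ is in general smaller than $1$. Even made rigorous (e.g.\ by bounded distortion on a fundamental domain $[x_0,f(x_0)]$, whose forward images near $p$ are pairwise disjoint), (b) only bounds each passage through $V_\delta$ below by a factor $c_\delta$, which may also be smaller than $1$. Both losses occur once per excursion. What you actually obtain is $\log|Df^n(x)|\ge \log\lambda_\delta\cdot\#\{0\le k<n: f^k(x)\notin V_\delta\}-\kappa_\delta E_n(x)$. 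Here $\kappa_\delta=|\log C_\delta|+|\log c_\delta|$, and $E_n(x)$ is the number of entries into $V_\delta$ before time $n$. If $p\in\supp\mu$, ergodicity gives $E_n(x)/n\to\mu(f^{-1}(V_\delta)\setminus V_\delta)>0$. So the error term is of order $n$, not $O(1)$, and nothing prevents it from swamping the gain when short excursions dominate. Shrinking $\delta$ lowers the entrance frequency, but $C_\delta$, $\lambda_\delta$ and $c_\delta$ change with $\delta$ in an uncontrolled way, so you cannot trade one against the other.

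\textbf{The alternative route also fails.} $\Lambda_\delta$ is forward invariant and disjoint from $V_\delta$. By ergodicity, $\mu(\Lambda_\delta)=0$ for every $\delta$ as soon as $p\in\supp\mu$, so these sets exhaust nothing. In any event, the case $L(\mu)=0$ is sent back to (b).

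\textbf{The missing idea.} You need an expansion estimate for an induced map with \emph{no} multiplicative constant. Then each return contributes a definite positive amount, however long the orbit lingers near $P$, and the linearly many returns work for you instead of against you. This is what the paper extracts from Ma\~n\'e, without separating the case $\supp\mu\cap P\neq\emptyset$:
\begin{itemize}
\item It disposes of periodic measures via (H3) and notes $\Lambda:=\supp\mu=\Lambda^\infty$.
\item It takes an interval $J$ adapted to $\Lambda$ around a non-periodic point (Lemma~I.2).
\item It invokes Lemma~I.3 (with the erratum), which gives a single $\lambda<1$ with $|\psi'|\le\lambda$ for every return map $\psi\in R(\Lambda,J)$.
\item Adaptedness shows that for $\mu$-a.e.\ $y\in J$, the inverse branch $\varphi_m$ with $m=\tau(y)$ and $\varphi_m(\Phi(y))=y$ is such a return map. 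Hence $|D\Phi(y)|\ge\lambda^{-1}$ for the first return map $\Phi$.
\item Kac's lemma then yields $L(\mu)=\mu(J)\int_J\log|D\Phi|\,d\nu\ge\mu(J)\log\lambda^{-1}>0$.
\end{itemize}
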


\subsubsection{The fully periodic case}    

Notice that by assumption~(\ref{hyp:H3}) we may assume that $\Lambda\eqdef\operatorname{supp}(\mu)$ is not a periodic orbit. Consider the set 
\[
\Lambda^{\infty}\eqdef\bigcap_{n\geq 0}f^n(\Lambda). 
\]
\begin{lemma}
    \label{lem:igualdadeinfinito}
$\Lambda^\infty=\Lambda$    
\end{lemma}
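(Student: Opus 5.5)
We prove the two inclusions separately, using only that $\Lambda=\operatorname{supp}(\mu)$ is compact and that $\mu$ is $f$-invariant.

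First we show $f(\Lambda)=\Lambda$. Since $f$ is continuous and $\Lambda$ is compact, $f(\Lambda)$ is compact, hence closed. Invariance gives
\[
\mu(f(\Lambda))=\mu\bigl(f^{-1}(f(\Lambda))\bigr)\ge\mu(\Lambda)=1,
\]
so $f(\Lambda)$ is a closed set of full measure and therefore contains $\operatorname{supp}(\mu)=\Lambda$. Conversely, take $y\in\Lambda$ and an open $V\ni f(y)$. Then $f^{-1}(V)$ is an open neighbourhood of $y$, so $\mu(V)=\mu(f^{-1}(V))>0$. Thus $f(y)\in\Lambda$, and $f(\Lambda)\subset\Lambda$. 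Together, $f(\Lambda)=\Lambda$.

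Iterating, $f^n(\Lambda)=\Lambda$ for every $n\ge0$, so the intersection defining $\Lambda^\infty$ is an intersection of copies of $\Lambda$, and $\Lambda^\infty=\Lambda$.

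There is no real obstacle here. The one point needing care is the inclusion $\Lambda\subset f(\Lambda)$: it requires $f(\Lambda)$ to be closed, which is where compactness is used, and it requires invariance in the form $\mu(f^{-1}A)=\mu(A)$. The hypotheses (H1)--(H3) and ergodicity play no role in this lemma.
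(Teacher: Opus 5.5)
Your proposal is correct and follows essentially the same argument as the paper: both prove $f(\Lambda)=\Lambda$, getting $f(\Lambda)\subset\Lambda$ from $\mu(V)=\mu(f^{-1}(V))>0$, and $\Lambda\subset f(\Lambda)$ from the fact that $f(\Lambda)$ is a compact set of full $\mu$-measure. Your explicit remark that compactness is what makes $f(\Lambda)$ closed, and hence able to contain the support, spells out a step the paper leaves implicit.
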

\begin{proof}
It suffices to notice that $f(\Lambda)=\Lambda$. Indeed, take $x\in\Lambda$ and $V$ an open neighborhood around  $f(x)$. Then, $f^{-1}(V)$ is an open neighbourhood around $x$ and by invariance
\[
\mu(V)=\mu(f^{-1}(V))>0.
\]
Thus, $f(x)\in\Lambda$ and therefore $f(\Lambda)\subset\Lambda$. For the reverse inclusion, since clearly one has $\Lambda\subset f^{-1}(f(\Lambda))$ one obtains
\[
1=\mu(\Lambda)\leq\mu(f^{-1}(f(\Lambda)))=\mu(f(\Lambda)).
\]
Therefore, $f(\Lambda)$ is a compact set with full measure, which implies that $\Lambda\subset f(\Lambda)$. This finishes the proof that $f(\Lambda)=\Lambda$ and that $\Lambda^\infty=\Lambda$. 
\end{proof}

\begin{lemma}
    \label{lem:expoenteperiodicopositivo}
Let $f$ be a $C^2$ immersion of $\mathbb{S}^1$ satisfying assumptions~\emph{(\ref{hyp:H1})--(\ref{hyp:H3})}. Given an ergodic $f$-invariant measure $\mu\in\cP_f^{\operatorname{erg}}(\mathbb{S}^1)$, if $\mu\neq\delta_p$ for every $p\in P$ then $\mu\left(\Lambda^{\infty}\cap\per\right)>0$ implies $L(\mu)>0$.
\end{lemma}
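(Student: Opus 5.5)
The plan is to show that an ergodic measure charging periodic points must be a periodic orbit measure, and then to read off its exponent from (\ref{hyp:H2})--(\ref{hyp:H3}).

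\emph{Step 1: $\mu$ is carried by a single periodic orbit.} The set $\per$ is countable, since by (\ref{hyp:H1}) $f$ is conjugate to $z\mapsto z^d$ and so has finitely many periodic points of each period. Hence $\mu(\Lambda^\infty\cap\per)>0$ forces $\mu(\{q\})>0$ for some periodic point $q$. Let $\mathcal O(q)$ be its orbit, of least period $k=\pi_f(q)$. This orbit is a finite set with $f^{-1}(\mathcal O(q))\supset\mathcal O(q)$. The invariant set we use is
\[
B:=\bigcup_{n\ge0}f^{-n}(\mathcal O(q)),
\]
which satisfies $f^{-1}(B)=B$ and $\mu(B)>0$, so ergodicity gives $\mu(B)=1$.

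For invariance along the orbit, write $f^{-1}(\{f(q)\})\ni q$; then $\mu(\{f(q)\})=\mu(f^{-1}\{f(q)\})\ge\mu(\{q\})$. Going around the cycle, all points of $\mathcal O(q)$ carry equal mass, and $\mu(\mathcal O(q))>0$. Now $\mathcal O(q)$ is forward invariant, and $\mu(f^{-1}\mathcal O(q)\setminus\mathcal O(q))=\mu(f^{-1}\mathcal O(q))-\mu(\mathcal O(q))=0$. Iterating, $\mu(B\setminus\mathcal O(q))=0$. Therefore $\mu$ is the equidistributed measure on $\mathcal O(q)$:
\[
\mu=\frac1k\sum_{j=0}^{k-1}\delta_{f^j(q)}.
\]
This is an alternative to arguing through $\Lambda^\infty=\Lambda$ (Lemma~\ref{lem:igualdadeinfinito}); it only uses that $\Lambda$ would then equal the orbit.

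\emph{Step 2: computing the exponent.} By the chain rule,
\[
L(\mu)=\frac1k\sum_{j=0}^{k-1}\log|Df(f^j(q))|=\frac1k\log|Df^k(q)|.
\]
If $q\notin P$, then (\ref{hyp:H3}) gives $|Df^k(q)|>1$, hence $L(\mu)>0$. If $q\in P$, then $q$ is a fixed point by (\ref{hyp:H2}), so $k=1$ and $\mu=\delta_q$ with $q\in P$, which the hypothesis excludes.

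\emph{Main obstacle.} The only delicate step is the measure-theoretic one: passing from positive mass on $\per$ to $\mu$ being a periodic orbit measure. This requires handling non-invertibility, because preimages of the orbit are not on the orbit. The backward saturation $B$ together with ergodicity handles this, and I expect no further difficulty.
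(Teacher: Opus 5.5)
Your proof is correct and follows the paper's route. Countability of $\per$ from (H1) gives an atom $q$, invariance plus ergodicity force $\mu$ to be the equidistributed measure on $\mathcal{O}(q)$, and (H3) gives $L(\mu)=\frac1k\log|Df^k(q)|>0$. You also fill in what the paper compresses into ``by invariance and ergodicity'' (via the backward saturation $B$), and you treat explicitly the case $q\in P$, which the paper leaves implicit.
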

\begin{proof}
Assume that $\mu\left(\Lambda^{\infty}\cap\per\right)>0$. By assumption~(\ref{hyp:H1}), $\per$ is a countably infinite set. Thus, there must exist some $q\in\per$ so that $\mu(\{q\})>0$. By invariance and ergodicity, it follows that $\mu(O(q))=1$. Assumption~(\ref{hyp:H3}) implies then $L(\mu)>0$ as desired.
\end{proof}

\subsubsection{Adapted intervals and return maps}

Let us recall now some definitions and results from \cite{mane} which will play a major role in our argument. We say that a sequence $\tix=(x_{-n})_{n\in\N}\subset \Lambda$ is a \emph{coherent sequence of $x\in\Lambda$} if $x_0=x$ and $f(x_{-n})=x_{-n+1}$, i.e. $\tix$ is a choice of past for $x$ inside $\Lambda$. We denote by $\Sigma(x,\Lambda)$ the set of all coherent sequences of $x\in\Lambda$. Given $J\subset\mathbb{S}^1$, a map $\varphi: J\to\mathbb{S}^1$ is said to be \emph{an inverse branch of $f^n$ inside $J$} if $f^n\circ\varphi=Id_{J}$. A \emph{coherent sequence of inverse branches} is a sequence $\{\varphi_n\}_{n\in\N}$ of inverse branches of $f^n$ inside $J$ such that $f\circ\varphi_{n+1}=\varphi_n$.

\begin{definition}
We say that an open interval $J\subset\mathbb{S}^1$ is \emph{adapted} to $\Lambda$ if $J\cap\Lambda\neq\emptyset$ and for every $x\in J\cap\Lambda$, for every coherent sequence $\tix\in\Sigma(x,\Lambda)$ there exists a coherent sequence of inverse branches $\{\varphi_n\}_{n\in\N}$ satisfying, for all $n>0$,
\begin{enumerate}[label=(\Roman*)]
    \item $\varphi_n(x)=x_{-n}$;
    \item either $\varphi_n(J)\subset J$ or $\varphi_n(J)\cap J=\emptyset$
\end{enumerate}
\end{definition}

The following corresponds to Lemma I.2 of \cite{mane} applied directly in our case. 

\begin{lemma}
    \label{lem:maneI2}
Let $f$ be a $C^2$ immersion of $\mathbb{S}^1$ satisfying assumptions~\emph{(\ref{hyp:H1})}. Then, for every $x\in\Lambda^\infty\setminus\per$ there exists an open interval $J\supset\{x\}$ adapted to $\Lambda$.    
\end{lemma}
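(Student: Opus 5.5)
The plan is to prove Lemma~\ref{lem:maneI2} by going through the topological conjugacy $h$ of (\ref{hyp:H1}), $h\circ f=E_d\circ h$ with $E_d(z)=dz \bmod 1$. In this setting every inverse branch of $f^n$ on an interval $J$ of length less than $1$ is well defined, because $f$ is a covering map and $J$ is simply connected. Moreover, $h$ sends inverse branches of $f^n$ on $J$ to inverse branches of $E_d^n$ on $h(J)$. Condition (II) is purely topological: it only says whether two intervals are nested or disjoint. So it is enough to build an adapted interval for $E_d$ around $h(x)$ relative to $h(\Lambda)$ and then pull it back by $h^{-1}$. Coherent sequences are also transported by $h$, so condition (I) is preserved.

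For $E_d$, the branches $\varphi_n$ along a given past $\tix$ are determined, since $\varphi_n$ is the unique branch on $J$ with $\varphi_n(x)=x_{-n}$. They contract lengths by $|d|^{-n}$. Hence $\varphi_n(J)$ is an interval of length $|J|/|d|^n$ containing $x_{-n}$. Condition (II) can then fail only if $\varphi_n(J)$ meets $J$ but is not contained in it. Since $\varphi_n(J)$ is much shorter than $J$, this means $\varphi_n(J)$ contains an endpoint of $J$. So I will try to choose the endpoints of $J$ so that no backward image $\varphi_n(J)$ contains either endpoint.

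The natural choice is to take the endpoints of $J$ to be points whose forward orbits avoid $J$. Suppose $a$ is an endpoint, $n\ge1$, and $a\in\varphi_n(J)$. Then $E_d^n(a)\in E_d^n(\varphi_n(J))=J$. So it suffices to have
\[
E_d^n(a),\,E_d^n(b)\notin J\qquad\text{for all }n\ge1 .
\]
This is exactly the classical construction of a ``nice interval''. I will take $a,b$ on either side of $h(x)$ on periodic orbits of high period (or on preimages of a fixed point), chosen arbitrarily close to $h(x)$. I then shrink $J$ to the component of the complement of the finite forward orbits of $a$ and $b$ that contains $h(x)$, and reselect its endpoints from those orbits. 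Concretely, I fix a periodic orbit $O$ which does not contain $h(x)$, and let $J$ be the component of $\Sone\setminus O$ containing $h(x)$. Its endpoints lie in $O$, whose forward orbit is $O$ itself and is therefore disjoint from the open interval $J$. This works provided $h(x)\notin O$, which holds because $x\notin\per$. Taking $O$ of large period also makes $|J|$ small.

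Finally, I check that $J\cap\Lambda\ni x$ is nonempty and that (I) holds by construction, which is where the hypothesis $x\in\Lambda^\infty$ enters: it guarantees that coherent sequences in $\Sigma(x,\Lambda)$ exist. I expect the main obstacle to be the endpoint bookkeeping. The inclusion $\varphi_n(J)\subset J$ must be a genuine containment, which requires ruling out that $\varphi_n(J)$ is an interval straddling an endpoint. That is excluded because an endpoint in $\varphi_n(J)$ would force $E_d^n(\text{endpoint})\in J$, or on the closure, which requires $J$ open and $O$ disjoint from $J$. I also need to check that $h^{-1}$ respects the nested/disjoint dichotomy, which holds because $h$ is a homeomorphism of the circle.
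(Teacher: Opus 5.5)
Your argument is correct, and it takes a different route from the paper. The paper gives no proof of this lemma; it simply invokes Mañé's Lemma I.2. That result is proved in a much more general setting: $\Lambda$ is a compact invariant set free of critical points, but the map may have critical points elsewhere. There, branches of $f^{-n}$ along pasts in $\Lambda$ need not extend over a whole interval, and the construction of $J$ is delicate.

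You instead use that, under (H1), $f$ is a covering map. On an open arc $J\neq\mathbb{S}^1$, each preimage of a point therefore determines a unique continuous branch of $f^{-n}$. Uniqueness gives (I), and it also gives the coherence relation $f\circ\varphi_{n+1}=\varphi_n$. You should state coherence explicitly, and note that (I) must be checked for every point of $J\cap\Lambda$, not only for $x$; your uniqueness argument covers this.

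Taking $J$ to be the component of $\mathbb{S}^1\setminus O$ containing $x$, with $O$ a periodic orbit, then yields (II) for all branches at once. A branch image that meets $J$ without lying in it contains an endpoint, and that endpoint's forward orbit stays in $O$, hence outside $J$.

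What your route buys is a short, self-contained proof. It produces a single $J$ adapted simultaneously to every compact invariant $\Lambda$ containing $x$, and it uses only $x\in\Lambda$ and $x\notin O$. The hypothesis $x\in\Lambda^\infty$ plays no role, contrary to your remark: if $\Sigma(x,\Lambda)$ were empty, the condition would hold vacuously. The conjugacy $h$ is also a convenience rather than a necessity.

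Two side remarks are inaccurate but harmless. First, the length comparison is unnecessary: connectedness of $\varphi_n(J)$ alone forces it to contain an endpoint. Second, a periodic orbit of large period need not make $|J|$ small. For $E_2$, the orbit of $1/(2^k-1)$ leaves a gap of length about $1/2$. If a small $J$ is wanted, take $O$ to be the union of two periodic orbits with points close to $h(x)$ on either side; this set is still forward invariant, so the argument goes through unchanged.
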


\begin{definition}
We say that a map
$\psi:J\to J$ is a \emph{return map} of $\Lambda$ if there exists
$m\ge 1$ such that $\psi$ is a branch of $f^{-m}|_J$ satisfying
$f^j(\psi(J))\cap J=\emptyset$ for all $0<j<m$ and there exists
$x\in J\cap\Lambda^\infty$ such that
$\psi(x)\in\Lambda^\infty$. We denote $R(\Lambda,J)$ the set of return maps $\psi:J\to J$ of $\Lambda$.
\end{definition}
 The next result is Lemma~I.3 of \cite{mane}, with the correction
given in \cite{mane-erratum}, specialized to our setting of a
$C^2$ circle immersion.

\begin{lemma}
 \label{lem:maneI3} 
 If $f:\T\to\T$ is a $C^2$ immersion satisfying assumption~\emph{(\ref{hyp:H1})} and
$\Lambda\subset\T$ is a compact invariant set then, if
$\Lambda^\infty$ contains non-periodic points, there
exists an interval $J$ adapted to $\Lambda$ and constants
$K_1>0$, $0<\lambda<1$ such that every coherent sequence
$(J,\{\varphi_n\})$ associated to $\Lambda$ satisfies
\begin{align}
\sum_{m=1}^{\infty}\left|\varphi_m'(x)\right|
    &\le K_1, \tag{2}\\
\left|\varphi_n'(x)\right|
    &\le K_1\left|\varphi_n'(y)\right|, \tag{3}
\end{align}
for all $x\in J$, $y\in J$, $n\ge 1$, and
\begin{equation}
|\psi'(x)|\le\lambda
\tag{4}
\end{equation}
for all $x\in J$, $\psi\in R(\Lambda,J)$.
\end{lemma}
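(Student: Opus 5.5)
The plan follows Mañé's original proof of Lemma~I.3 and combines three ingredients: a Denjoy-type distortion estimate for the $C^2$ inverse branches, a bounded-overlap argument that controls the total length of the pulled-back intervals $\varphi_n(J)$, and a choice of a smaller interval that forces long return times.

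\textbf{Step 1: choosing the interval.} Since $\Lambda^\infty$ contains a non-periodic point $x_0$, Lemma~\ref{lem:maneI2} gives an open interval $\widehat J\ni x_0$ adapted to $\Lambda$. Using (\ref{hyp:H1}), conjugate $f$ to the linear expanding map $z\mapsto z^d$ by a homeomorphism. Branches of $f^{-n}$ correspond to branches of the linear inverse, which contract uniformly. So for every coherent sequence of inverse branches on $\widehat J$,
\[
\delta_n:=\sup\bigl\{|\varphi_n(\widehat J)|\bigr\}\to0,
\]
uniformly over all coherent sequences. The interval $J$ of the statement will be a small interval $J\Subset\widehat J$ around $x_0$, chosen in Step~4. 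The adapted property~(II) restricts to $J$ and still holds there, because an inverse branch defined on $\widehat J$ restricts to one on $J$.

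\textbf{Step 2: distortion, giving (3).} For a coherent sequence $\{\varphi_n\}$ on $\widehat J$ and $x,y\in J$, the chain rule together with $f\in C^2$ and $|Df|$ bounded away from zero give
\[
\Bigl|\log|\varphi_n'(x)|-\log|\varphi_n'(y)|\Bigr|
\le C\sum_{j=1}^{n}|\varphi_j(\widehat J)|.
\]
Hence (3) reduces to a uniform bound on $\sum_j|\varphi_j(\widehat J)|$. Once (3) holds on $\widehat J$, the mean value theorem gives $|\varphi_m'(x)|\le K\,|\varphi_m(\widehat J)|/|\widehat J|$. Summing in $m$ then yields (2) from the same length bound.

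\textbf{Step 3: the length bound (main obstacle).} The intervals $\varphi_j(\widehat J)$ are not disjoint. By~(II), however, each of them is either contained in $\widehat J$ or disjoint from it, and the same holds for their images under $f^i$. I would follow Mañé's scheme:
\begin{itemize}
\item Group the indices $j$ between consecutive returns into $\widehat J$; these returns are exactly the $\psi\in R(\Lambda,\widehat J)$.
\item Show that within each block the intervals $\varphi_j(\widehat J)$ have bounded multiplicity. The point is that an overlap would produce a branch mapping an interval into itself, which by~(II) is either a return or creates an attracting or neutral periodic point with a one-sided basin. The latter is excluded by the conjugacy (\ref{hyp:H1}), since it would give a non-degenerate interval whose iterates never grow.
\item This yields $\sum_j|\varphi_j(\widehat J)|\le C\,(1+\#\text{returns})$, and the returns themselves contribute a geometric series once Step~4 is established.
\end{itemize}
To avoid circularity, I would run Steps~2--4 as a joint induction on $n$, using that $\delta_n\to0$ to keep the a priori distortion small. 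The erratum \cite{mane-erratum} concerns exactly this bookkeeping, so I expect it to be the delicate part.

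\textbf{Step 4: contraction of returns, giving (4).} Take $J\subset\widehat J$ a small neighbourhood of the non-periodic point $x_0$, with $J\cap\Lambda^\infty\neq\emptyset$. Since $x_0$ is not periodic, any $\psi\in R(\Lambda,J)$ is a branch of $f^{-m}$ with $m\ge M(J)$, and $M(J)\to\infty$ as $J$ shrinks to $x_0$. Extend $\psi$ to the inverse branch $\varphi_m$ on $\widehat J$. Using the distortion bound from Step~2 on the larger interval $\widehat J$,
\[
|\psi'(x)|\le K\,\frac{|\varphi_m(\widehat J)|}{|\widehat J|}\le K\,\frac{\delta_{M(J)}}{|\widehat J|}.
\]
Choosing $J$ small enough makes the right-hand side at most some fixed $\lambda<1$. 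The conclusions (2) and (3) on $J$ are then inherited from $\widehat J$.
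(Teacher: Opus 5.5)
The paper does not prove this lemma. It imports it from Ma\~n\'e \cite{mane} (Lemma~I.3, as corrected in \cite{mane-erratum}), so a self-contained reconstruction has to supply all of Ma\~n\'e's argument, and yours has genuine gaps.

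\textbf{Step~1: adaptedness does not pass to subintervals.} You claim that property~(II) restricts from $\widehat J$ to a subinterval $J$. It does not. If $\varphi_n(\widehat J)\subset\widehat J$, the smaller interval $\varphi_n(J)$ can straddle an endpoint of $J$. So the $J$ chosen in Step~4 is never shown to be adapted, although adaptedness is part of the conclusion.

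\textbf{Steps~2--3: the target bound can be false for $\widehat J$.} You aim at a uniform bound on $\sum_j|\varphi_j(\widehat J)|$ for the interval $\widehat J$ supplied by Lemma~\ref{lem:maneI2}. You have no control over its size, and the bound can simply fail. Under (\ref{hyp:H1}) alone, $\widehat J$ may contain an indifferent fixed point $p\in\Lambda$ (take $\Lambda=\Sone$ for a map with a neutral fixed point). Then:
\begin{itemize}
\item the constant past $(p,p,\dots)$ gives $\sum_m|\varphi_m'(p)|=\infty$;
\item the branch $\varphi_1$ fixing $p$ belongs to $R(\Lambda,\widehat J)$ with $|\varphi_1'(p)|=1$;
\item (3) fails too, since $|\varphi_n'(p)|=1$ while $|\varphi_n'(\xi_n)|=|\varphi_n(\widehat J)|/|\widehat J|\to0$ for suitable $\xi_n\in\widehat J$.
\end{itemize}
So the constant $K$ you use on $\widehat J$ in Step~4 need not exist. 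The interval must be chosen carefully from the start: small, and avoiding low-period and non-hyperbolic periodic points. Under (\ref{hyp:H1}) one can take the component containing $x_0$ of the complement of a periodic orbit of large period. It is automatically adapted, because the forward orbits of its endpoints never enter it.

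\textbf{Step~3: the summation over blocks is circular.} Even with a good interval, the core of Step~3 is not carried out. Inside a block the pullbacks are in fact pairwise disjoint. To see this, apply (II) to the past restarted at the return point. If $\tilde\varphi_a(\widehat J)$ and $\tilde\varphi_b(\widehat J)$ met with $0\le a<b$ before the next return, then $f^a$ would map their connected union onto $\widehat J\cup\tilde\varphi_{b-a}(\widehat J)$, forcing an earlier return.

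However, the resulting bound $C(1+\#\text{returns})$ is useless, because (2)--(3) must hold for pasts that return infinitely often. Summing over blocks needs two things:
\begin{itemize}
\item a uniform $\lambda<1$ for the returns to the same interval whose blocks you are summing over;
\item bounded distortion, to carry that contraction from one block to the next.
\end{itemize}
Step~4 gives contraction only for returns to a smaller $J$. It also derives that contraction from distortion on $\widehat J$, which is exactly what Step~3 is supposed to prove.

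The joint induction using $\delta_n\to0$ does not break this circle. Distortion is governed by the sum $\sum_j|\varphi_j(\widehat J)|$, not by the size of its terms, and the number of terms is unbounded. For instance, a past can linger arbitrarily long near an indifferent point, where pullbacks shrink only polynomially.

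What is missing is a mechanism giving, at the same time, uniform distortion for arbitrarily long compositions of returns and a uniform contraction of the returns themselves. That is the nontrivial content of Ma\~n\'e's lemma, and the proposal does not supply it.
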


\subsubsection{Estimating the Poincaré map}

We return now to the particular setting in which $\Lambda=\operatorname{supp}(\mu)$, where $\mu$ is an ergodic measure with $\mu(P)=0$. By Lemma~\ref{lem:expoenteperiodicopositivo} we may assume that $\mu$-almost every point in $\Lambda^\infty$ is non-periodic. Then, Lemma~\ref{lem:maneI2} ensures the existence of an open interval $J\supset\{x\}$ adapted to $\Lambda$. In particular, we have $\mu(J)>0$, and we may define the almost everywhere finite return time function for $y\in J$
\[
\tau(y)=\min\{n>0;f^n(y)\in J\}.
\]
Let $\Phi(y)\eqdef f^{\tau(y)}(y)$ be the almost everywhere well-defined Poincaré first return map. Observe that, by continuity, $\tau$ is locally constant and thus at each point where $\Phi$ is well defined it is also differentiable. We shall establish the following.

\begin{lemma}
    \label{lem:estimapoincare}
Let $\lambda\in(0,1)$ be the constant given in Lemma~\ref{lem:maneI3}. Then, for $\mu$-almost every $y\in J$ it holds $|D\Phi(y)|\geq \lambda^{-1}$.
\end{lemma}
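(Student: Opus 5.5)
The plan is to realize, for $\mu$-almost every $y\in J$, the local inverse of $\Phi$ at $\Phi(y)$ as a return map $\psi\in R(\Lambda,J)$. Lemma~\ref{lem:maneI3} then gives $|\psi'|\le\lambda$ on $J$, and inverting via the chain rule yields $|D\Phi(y)|\ge\lambda^{-1}$.

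\emph{Choice of the full-measure set.} Let $G\subset J$ be the set of points $y\in J\cap\Lambda$ with $\tau(y)<\infty$. It has full $\mu$-measure in $J$ because $\mu(\Lambda)=1$ and by Poincaré recurrence. Fix $y\in G$ and set $m:=\tau(y)$ and $z:=\Phi(y)=f^m(y)$. By Lemma~\ref{lem:igualdadeinfinito} we have $f(\Lambda)=\Lambda=\Lambda^\infty$, so $f^j(y)\in\Lambda$ for all $j$, and in particular $z\in J\cap\Lambda$.

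\emph{Building a coherent sequence through $y$.} Define $z_{-j}:=f^{m-j}(y)$ for $0\le j\le m$, so that $z_0=z$ and $z_{-m}=y$. Since $f(\Lambda)=\Lambda$, every point of $\Lambda$ has a preimage in $\Lambda$. We can therefore extend $(z_{-j})$ inductively to a full sequence $\tilde z\in\Sigma(z,\Lambda)$. Because $J$ is adapted to $\Lambda$, this gives a coherent sequence of inverse branches $\{\varphi_n\}$ on $J$ with $\varphi_n(z)=z_{-n}$. Each $\varphi_n(J)$ is either contained in $J$ or disjoint from it.

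\emph{$\psi:=\varphi_m$ is a return map.} Since $\varphi_m(z)=y\in J$, the set $\varphi_m(J)$ meets $J$, hence $\varphi_m(J)\subset J$, and so $\psi:J\to J$ is a branch of $f^{-m}|_J$. For $0<j<m$, coherence gives $f^j\circ\varphi_m=\varphi_{m-j}$. Now $\varphi_{m-j}(J)$ contains $\varphi_{m-j}(z)=f^j(y)$, which lies outside $J$ by minimality of $\tau(y)$. Hence $\varphi_{m-j}(J)\cap J=\emptyset$, that is, $f^j(\psi(J))\cap J=\emptyset$. Finally, $z\in J\cap\Lambda^\infty$ and $\psi(z)=y\in\Lambda^\infty$. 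Thus $\psi\in R(\Lambda,J)$, and estimate (4) of Lemma~\ref{lem:maneI3} gives $|\psi'|\le\lambda$ on $J$.

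\emph{Conclusion.} We have $f^m\circ\psi=\mathrm{Id}_J$ and $\psi(z)=y$. Moreover $\tau\equiv m$ near $y$, so $\Phi=f^m$ near $y$. Differentiating at $z$ gives $D\Phi(y)\,\psi'(z)=1$, hence $|D\Phi(y)|=|\psi'(z)|^{-1}\ge\lambda^{-1}$.

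I expect the main obstacle to be the construction of a coherent past of $z$ inside $\Lambda$ that passes through the prescribed finite orbit segment. This is exactly where the surjectivity $f(\Lambda)=\Lambda$ from Lemma~\ref{lem:igualdadeinfinito} is needed. A secondary point is checking that the dichotomy (II) in the definition of adapted intervals is what forces the disjointness required in the definition of a return map.
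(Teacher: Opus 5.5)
Your proof is correct and takes the same approach as the paper. You realize $\varphi_m$, with $m=\tau(y)$, as an element of $R(\Lambda,J)$ via the adapted interval, apply estimate (4) of Lemma~\ref{lem:maneI3}, and invert using $f^m\circ\varphi_m=\mathrm{Id}_J$. Your version is also more careful than the paper's in two places: you explicitly choose the coherent past of $\Phi(y)$ to pass through the orbit segment of $y$, using $f(\Lambda)=\Lambda$, and you apply dichotomy (II) to $\varphi_{m-j}=f^j\circ\varphi_m$ in the correct direction.
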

\begin{proof}
We fix $y\in J\cap\Lambda^{\infty}$ a non-periodic point which returns to $J$ infinitely often. Fix $m=\tau(y)$ and set $x=\Phi(y)=f^m(y)$. By Lemma~\ref{lem:igualdadeinfinito} we have $x\in\Lambda^\infty$. Then, there exists a coherent sequence $\tix=(x_{-n})_{n\in\N}\in\Sigma(x,\Lambda)$. Since $J$ is an adapted interval there exists a coherent sequence of inverse branches $(J,\{\varphi_n\})$ such that $\varphi_n(x)=x_{-n}$. In particular, we have $\varphi_m(x)=y$. We claim that $\varphi_m\in R(\Lambda,J)$. 

Indeed, since $J$ is an adapted interval for each $0<j<m$ we either have $f^j(\varphi_m(J))\subset J$ or $f^j(\varphi_m(J))\cap J=\emptyset$. If the latter possibility occurs, then we would have, in particular, 
\[
f^j(\varphi_m(x))\in J,
\]
for some $0<j<m$, which is absurd since $\varphi_m(x)=y$ and $m=\tau(y)$. 

Applying (4) of Lemma~\ref{lem:maneI3} we thus get 
\[
|D\varphi_m(x)|\leq\lambda.
\]
Since $\varphi_m$ is an inverse branch of $f^{-m}$ this implies that 
\[
|Df^m(y)|\geq\lambda^{-1},
\]
as claimed. This completes the proof of the lemma.
\end{proof}

\subsubsection{Proof of Lemma~\ref{lem:expoentepositivo}}

In view of Lemma~\ref{lem:estimapoincare}, Lemma~\ref{lem:expoentepositivo} follows directly from the abstract relation between Lyapunov exponents of the Poincaré first return map with the exponent of $f$, made precise in the lemma below.

\begin{lemma}
\label{lem:induzexpoente}
Let $\nu\eqdef\mu|_J/\mu(J)$ be the normalized restriction of $\mu$ to the adapted interval $J$. Then,
\[
\int_J\log|D\Phi(y)|d\nu(y)=\frac{1}{\mu(J)}\int_{\mathbb{S}^1}\log|Df(y)|d\mu(y).
\]
\end{lemma}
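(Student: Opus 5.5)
This is Kac's lemma for the induced map, in its logarithmic-derivative form. The chain rule writes $\log|D\Phi|$ as a Birkhoff sum along the return time, $\log|D\Phi(y)|=\sum_{k=0}^{\tau(y)-1}\log|Df(f^k y)|$, and the Kac tower decomposition of $\mu$ converts the integral of such sums against $\nu$ into an integral against $\mu$.

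\textbf{Step 1: Poincaré recurrence.} Since $\mu(J)>0$, $\tau$ is finite $\mu$-a.e.\ on $J$. Because $f$ has no critical points, $\phi:=\log|Df|$ is continuous and bounded, so $|\phi|\le C$.

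\textbf{Step 2: the chain rule.} For $\mu$-a.e.\ $y\in J$, $\tau$ is constant near $y$, and
\[
\log|D\Phi(y)|=\sum_{k=0}^{\tau(y)-1}\phi(f^k(y)).
\]

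\textbf{Step 3: the tower decomposition.} Set $J_n:=\{y\in J:\tau(y)=n\}$. The sets $f^k(J_n)$, $0\le k<n$, should partition $\mathbb{S}^1$ up to a $\mu$-null set. Non-invertibility is the delicate point, so I will argue through pullbacks instead of images. Define $A_k:=\{y\in J:\tau(y)>k\}$ and $B_k:=f^{-k}(A_k)$ (with $B_0=J$). Then
\[
\int_J \sum_{k=0}^{\tau-1}\phi\circ f^k\,d\mu=\sum_{k\ge0}\int_{A_k}\phi\circ f^k\,d\mu=\sum_{k\ge0}\int\mathbf 1_{A_k}\,(\phi\circ f^k)\,d\mu .
\]
The summands are exchanged by Fubini, which is justified because $\sum_k\mu(A_k)=\int_J\tau\,d\mu\le1$ by Kac's lemma for an invariant, not necessarily invertible, measure. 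Each summand should then be identified with $\int_{E_k}\phi\,d\mu$ for the set $E_k$ of points whose last visit to $J$ occurred exactly $k$ steps ago. I will do this in the natural extension $(\mathbb{S}^1_f,\hat f,\hat\mu)$ introduced in Section~4, where $\hat f$ is invertible. There I will lift $J$ to $\hat J=\pi^{-1}(J)$, note that the return time of $\hat x$ is $\tau(\pi\hat x)$, and apply the standard invertible Kac tower. Ergodicity of $\hat\mu$, inherited from $\mu$, makes the tower over $\hat J$ cover $\mathbb{S}^1_f$ up to a null set. This gives
\[
\int_{\hat J}\sum_{k<\tau\circ\pi}\phi\circ\pi\circ\hat f^k\,d\hat\mu=\int\phi\circ\pi\,d\hat\mu=\int\phi\,d\mu .
\]
Projecting with $\pi_*\hat\mu=\mu$ and $\pi\circ\hat f=f\circ\pi$ turns the left-hand side into $\int_J\log|D\Phi|\,d\mu$.

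\textbf{Step 4: normalization.} Dividing by $\mu(J)$ converts $\mu|_J$ into $\nu$ and gives the claimed identity.

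The main obstacle is Step 3: making the tower argument rigorous for a non-invertible $f$ and justifying the exchange of sums. Lifting to the natural extension resolves both, since boundedness of $\phi$ together with $\int\tau\,d\hat\mu|_{\hat J}=1$ yields dominated convergence. Ergodicity of $\mu$ is what guarantees the tower exhausts the space; without it the right-hand side would carry only the mass on $\bigcup_n f^{-n}(J)$.
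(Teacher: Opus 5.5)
Your proof is correct, but it takes a different route from the paper's.

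\textbf{The paper's route.} The paper argues pointwise. It asserts that $\nu$ is $\Phi$-invariant and ergodic, and writes $\sum_{\ell<n}\log|D\Phi(\Phi^\ell y)|=\sum_{i<\tau_n(y)}\log|Df(f^iy)|$, where $\tau_n$ is the Birkhoff sum of $\tau$ along $\Phi$. It then divides by $n$ and applies the ergodic theorem to both $(\Phi,\nu)$ and $(f,\mu)$. Kac's lemma identifies $\lim\tau_n/n=\int\tau\,d\nu=1/\mu(J)$.

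\textbf{Your route.} You instead prove the integrated tower identity $\int_J\sum_{k<\tau}\phi\circ f^k\,d\mu=\int\phi\,d\mu$ directly. This needs no invariance or ergodicity of the induced system and no pointwise limit theorem. Ergodicity of $\mu$ enters only through $\int_J\tau\,d\mu=1$, i.e.\ the tower exhausts the space. Your bound $|\log|D\Phi||\le C\tau$ also makes explicit that $\log|D\Phi|$ is $\nu$-integrable, which the paper uses tacitly when applying Birkhoff to $\Phi$.

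\textbf{The cost, and a shortcut.} The price is building the tower for non-invertible $f$, which you do through the natural extension; that works. However, the preliminary detour through $A_k$, $B_k$ and the ``last visit'' sets $E_k$ is never finished on $\mathbb{S}^1$, and $B_k$ is never used, so it could simply be deleted. Alternatively, that detour can be completed without the natural extension by fixing the time at which the last visit is recorded. For $N\ge k$ and a Borel set $B$, the sets $f^{-(N-k)}(A_k\cap f^{-k}B)$, $0\le k\le N$, are pairwise disjoint subsets of $f^{-N}B$. Hence $\sum_{k\ge0}\mu(A_k\cap f^{-k}B)\le\mu(B)$. The measure $\sum_k(f^k)_*(\mu|_{A_k})$ is therefore bounded by $\mu$, and it has total mass $\int_J\tau\,d\mu=1$, so it equals $\mu$. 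Integrating $\phi$ against it gives exactly the identity you need.
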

\begin{proof}
First, notice that $\nu$ is a $\Phi$-invariant measure and the system $(\Phi,\nu)$ is ergodic. Then, denote 
$\tau_n(y)=\sum_{\ell=0}^{n-1}\tau(\Phi^{\ell}(y))$ the Birkhoff sum of the first return time function. By the chain rule, we have
\[
D\Phi^\ell(y)=Df^{\tau_\ell(y)}(y).
\]
Thus, 
\begin{equation}
    \label{eq:phicomf}
    \sum_{\ell=0}^{n-1}\log|D\Phi(\Phi^\ell(y))|=\sum_{i=0}^{\tau_n(y)-1}\log|Df(f^i(y))|.
\end{equation}
Now, divide by $n$ on both sides of \eqref{eq:phicomf}. By ergodicity of the system $(\Phi,\nu)$, the left hand side of \eqref{eq:phicomf} converges to $\int_J\log|D\Phi(y)|d\nu(y)$. Applying Kac's lemma and using ergodicity of $(f,\mu)$ we get that the right-hand side of \eqref{eq:phicomf} converges to $1/\mu(J)\int_{\mathbb{S}^1}\log|Df(y)|d\mu(y)$, concluding the proof.
\end{proof}

Combining Lemma~\ref{lem:estimapoincare} with Lemma~\ref{lem:induzexpoente} we thus get 
\[
L(\mu)=\int_{\mathbb{S}^1}\log|Df(y)|d\mu(y)=\mu(J)\int_J\log|D\Phi(y)|\geq\mu(J)\log(\lambda^{-1})>0.
\]
\qed

\begin{corollary}[Classification of ergodic components of typical asymptotic measures]
There exists a full Lebesgue measure set $\Gamma\subset\T$ such that for every $x\in\Gamma$ and every $\mu\in\cA_f(x)$, for almost every ergodic component $m$ of $\mu$, exactly one of the following holds:
\begin{enumerate}
\item $m$ is a Dirac measure supported on an indifferent periodic point;
\item $m$ has positive Lyapunov exponent and is a physical measure.
\end{enumerate}
\end{corollary}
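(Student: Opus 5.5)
We combine the entropy formula of Theorem~\ref{thm:entropyfor} with Lemmas~\ref{lem:decomposicao}, \ref{lem:quian} and \ref{lem:expoentepositivo}. By (\ref{hyp:H1}), $f$ is topologically conjugate to $x\mapsto dx$ with $|d|\ge2$, so Theorem~\ref{thm:entropyfor} applies. We take $\Gamma$ to be the full Lebesgue measure set it provides. Then for every $x\in\Gamma$ and every $\mu\in\cA_f(x)$ we have $h_\mu(f)=\int\log|Df|\,d\mu$; by Remark~\ref{rem:asymptotic-invariant}, $\mu$ is invariant.

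Applying Lemma~\ref{lem:decomposicao}, almost every ergodic component $m$ of $\mu$ satisfies $h_m(f)=L(m)$. Fix such a component. There are two cases.
\begin{enumerate}
\item If $m=\delta_p$ for some $p\in P$, we are in alternative 1.
\item Otherwise, Lemma~\ref{lem:expoentepositivo} (which only uses (\ref{hyp:H1})--(\ref{hyp:H3})) gives $L(m)>0$. Hence $h_m(f)=L(m)>0$, and Lemma~\ref{lem:quian} shows that $m$ is absolutely continuous and physical. This is alternative 2.
\end{enumerate}

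The two alternatives are mutually exclusive. A Dirac mass at $p\in P$ has exponent $\log|Df(p)|=0$ by (\ref{hyp:H2}), so it cannot have positive exponent. It also has zero entropy and is singular, so it cannot be physical in the sense of Lemma~\ref{lem:quian}. This gives the ``exactly one'' clause.

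The main point to check is that Lemma~\ref{lem:expoentepositivo} really applies to every ergodic $m\neq\delta_p$. Its proof reduces, via Lemma~\ref{lem:expoenteperiodicopositivo}, to the case where $\mu$-a.e.\ point of $\Lambda^\infty$ is non-periodic. A periodic-orbit measure not supported on $P$ has positive exponent by (\ref{hyp:H3}), so that reduction is harmless. I also expect a measurability subtlety: the ergodic decomposition is defined only up to null sets. This is handled by stating everything for almost every component, which is exactly what the corollary asserts.
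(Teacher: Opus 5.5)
Your proposal is correct and follows the paper's proof essentially verbatim. You take $\Gamma$ from Theorem~\ref{thm:entropyfor}, transfer the entropy formula to almost every ergodic component by Lemma~\ref{lem:decomposicao}, and, for components other than some $\delta_p$ with $p\in P$, combine Lemma~\ref{lem:expoentepositivo} with Lemma~\ref{lem:quian}. Your explicit check of the ``exactly one'' clause through $L(\delta_p)=\log|Df(p)|=0$ is a useful addition that the paper leaves implicit. The side remark that $\delta_p$ cannot be physical because it has zero entropy and is singular is unnecessary, and that reasoning alone would not exclude physicality in the basin sense, since Dirac masses at neutral fixed points can be physical in general. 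Let the zero exponent carry the exclusivity.
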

\begin{proof}
Let $\Gamma$ be the set given by Theorem~\ref{thm:entropyfor} and take $x\in\Gamma$. Then, every $\mu\in\cA_f(x)$ satisfies the entropy formula. By Lemma~\ref{lem:decomposicao}, almost every ergodic component $m$ of $\mu$ also satisfies the entropy formula. 
If $m$ is not supported on an indifferent periodic orbit, then, by Lemma~\ref{lem:expoentepositivo}, we have $L(m)>0$. By Lemma~\ref{lem:quian}, it follows that $m$ is a physical measure, concluding the proof. 
\end{proof}

\subsection{Low emergence}

The results we have proved so far enable us to write the following

\begin{corollary}
Let $f:\T\to\T$ be a $C^2$ immersion satisfying \emph{(\ref{hyp:H1})--(\ref{hyp:H4})}. Then, there exists a set $\Gamma\subset\T$ with $\Leb(\Gamma)=1$ such that if $x\in\Gamma$ then $\cA_{f}(x)\subset\overline{\operatorname{conv}}(\{\delta_p;p\in P\})$. In particular, $f$ has low emergence.
\end{corollary}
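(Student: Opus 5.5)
The plan is to show that, for Lebesgue-typical $x$, no asymptotic measure has an ergodic component of positive exponent. Once this is established, every asymptotic measure is a combination of Dirac masses on $P$, and Proposition~\ref{prop:compact-family-bound} finishes the argument.

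\textbf{Choice of $\Gamma$.} Take $\Gamma$ to be the intersection of three sets, each of full Lebesgue measure:
\begin{itemize}
\item[(a)] the set given by Theorem~\ref{thm:entropyfor}, which applies because of (\ref{hyp:H1});
\item[(b)] the set of points where (\ref{hyp:H4}) holds;
\item[(c)] the complement of the basins of all physical measures of positive exponent.
\end{itemize}
The main work is to show that (c) has full measure.

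\textbf{Structure of asymptotic measures.} Fix $x\in\Gamma$ and $\mu\in\cA_f(x)$. By the classification corollary, almost every ergodic component $m$ of $\mu$ is either $\delta_p$ with $p\in P$, or an ergodic measure with $L(m)>0$ satisfying the entropy formula. In the second case, Lemma~\ref{lem:quian} makes $m$ absolutely continuous.

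\textbf{Key step: absolutely continuous ergodic measures do not exist.} Suppose $m$ is an ergodic absolutely continuous invariant measure. Its basin $B(m)$ has positive Lebesgue measure, since by Birkhoff $m$-a.e.\ point lies in it. The basin is also invariant, because empirical limits are unchanged under one application of $f$, as in Lemma~\ref{lem:C-invariant}. Since $f$ is conjugate to an expanding map, $J(f)=\Sone$, so Theorem~\ref{thm:mane-E} forces $\Leb(B(m))=1$. Every point of $B(m)$ has convergent empirical measures, so $\#\cA_f(x)=1$ on a set of full measure, contradicting (\ref{hyp:H4}).

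Consequently no ergodic component of positive exponent can occur. Every $\mu\in\cA_f(x)$ is then a mixture $\sum_{p\in P}t_p\delta_p$, that is,
\[
\mu\in K:=\overline{\operatorname{conv}}\{\delta_p;p\in P\}.
\]
In this form the argument does not even need (c) as a separate ingredient: the contradiction with (\ref{hyp:H4}) already excludes such components for typical $x$.

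\textbf{Covering $K$.} Because $P$ is finite with $k$ elements, $K$ is a Lipschitz image of the $(k-1)$-simplex under
\[
t\mapsto\sum_p t_p\delta_p,
\]
with Lipschitz constant at most $\diam(\Sone)$ in $\Wone$. Hence
\[
N_{\Wone}(K,\eps)=O(\eps^{-(k-1)}),
\]
which is polynomial, so $\log\log$ of it is $O(\log\log(1/\eps))$ and the order of emergence is $0$. Proposition~\ref{prop:compact-family-bound} then gives low emergence.

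\textbf{Main obstacle.} The delicate point is the precise meaning of ``physical'' in Lemma~\ref{lem:quian}, namely that absolute continuity plus ergodicity yields a basin of positive Lebesgue measure. I will derive this directly: $m\ll\Leb$ and $m(B(m))=1$ give $\Leb(B(m))>0$. I also need that $B(m)$ is Borel and forward invariant, which follows by arguing as in Lemma~\ref{lem:C-invariant}. Everything else is bookkeeping.
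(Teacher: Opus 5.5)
Your proposal is correct and follows essentially the same route as the paper. Typical asymptotic measures satisfy the entropy formula, so non-Dirac ergodic components have positive exponent and are therefore absolutely continuous and physical; (H4) forbids such measures, which leaves only mixtures of $\delta_p$ with $p\in P$, whose closed convex hull has polynomial covering numbers. You also spell out the step the paper leaves implicit, namely why (H4) rules out physical measures; the appeal to Ma\~n\'e's $0$--$1$ law there is harmless but unnecessary, since $\Leb(B(m))>0$ already contradicts (H4), which is an almost-everywhere statement.
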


\begin{proof}
Assumption~(\ref{hyp:H4}) prohibits $f$ from possessing physical measures. Therefore, the results of the preceding subsection taken together imply the existence of a full measure set $\Gamma\subset\T$ so that for every $x\in\Gamma$ and every measure $\mu\in\cA_{f}(x)$, every ergodic component $\mu_{\omega}$ of $\mu$ is a Dirac mass supported in some point in $P$. The low emergence claim follows from Proposition~\ref{prop:compact-family-bound}.
\end{proof}


\bibliographystyle{plain}
\bibliography{Biblio}

\end{document}